\documentclass[12pt, reqno]{amsart}
\setcounter{tocdepth}{1}
\usepackage{amsmath}
\usepackage{amssymb}
\usepackage{epsfig}
\usepackage{graphicx}
\usepackage{color}
\usepackage{fullpage}
\definecolor{shadecolor}{gray}{0.875}
\usepackage{amscd}

\numberwithin{equation}{section}

\input xy
\xyoption{all}

\calclayout
\allowdisplaybreaks[3]

\theoremstyle{plain}
\newtheorem{prop}{Proposition}[section]

\newtheorem{theo}[prop]{Theorem}
\newtheorem{coro}[prop]{Corollary}

\newtheorem{lemm}[prop]{Lemma}

\theoremstyle{definition}
\newtheorem{defi}[prop]{Definition}

\newtheorem{rema}[prop]{Remark}

\newtheorem{exam}[prop]{Example}

\def\bR{{\mathbb R}}

\def\Eff{\overline{\mathrm{Eff}}}
\def\Pic{\mathrm{Pic}}

\def\Mor{\mathrm{Mor}}

\def\Pic{\mathrm{Pic}}

\makeatother
\makeatletter

\author{Brian Lehmann}
\address{Department of Mathematics \\
Boston College  \\
Chestnut Hill, MA \, \, 02467}
\email{lehmannb@bc.edu}

\author{Sho Tanimoto}
\address{Department of Mathematics, Faculty of Science, Kumamoto University, Kurokami 2-39-1 Kumamoto 860-8555 Japan}
\address{Priority Organization for Innovation and Excellence, Kumamoto University}
\email{stanimoto@kumamoto-u.ac.jp}

\title[Examples]{Rational curves on prime Fano threefolds of index $1$}

\begin{document}
\date{\today}

\begin{abstract}
We study the moduli spaces of rational curves on prime Fano threefolds of index $1$.  For general threefolds of most genera we compute the dimension and the number of irreducible components of these moduli spaces.  Our results confirm Geometric Manin's Conjecture in these examples and show the enumerativity of certain Gromov-Witten invariants.
\end{abstract}

\maketitle

\section{Introduction}

Let $X$ be a smooth complex Fano threefold of Picard rank $1$.  The lines and conics on $X$ play an essential role in the classification theory (\cite{iskov}) and have been extensively studied.  In this paper we will study rational curves of higher degree.  Our main theorem describes all components of $\Mor(\mathbb{P}^{1},X)$ for ``most'' Fano threefolds of Picard rank $1$ and index $1$.

\begin{theo} \label{theo: maintheorem}
Let $X$ be a smooth Fano threefold of Picard rank $1$ and index $1$ such that $-K_{X}$ is very ample and $X$ has degree $4 \leq H^{3} \leq 18$.
Assume that $X$ is general in moduli.  For any $d \geq 3$, the components of $\mathrm{Mor}(\mathbb P^1, X)$ parametrizing curves of anticanonical degree $d$ are described by the following list: 
\begin{itemize}
\item for every $d \geq 3$, there is one component parametrizing a dominant family of degree $d$ rational curves;
\item for every $d \geq 3$, there is one component parametrizing degree $d$ covers of lines;
\item when $d \geq 3$ is even, there is one component parametrizing degree $d/2$ covers of conics.
\end{itemize}
\end{theo}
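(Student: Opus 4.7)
My plan is to proceed by induction on the anticanonical degree $d$, with the argument broken up according to the genus $g = H^{3}/2 + 1 \in \{3, \ldots, 10\}$.  For each value of $H^{3}$ the Mukai--Iskovskikh classification provides an explicit model of a general $X$ as a linear and/or quadratic section of a homogeneous space (a quartic in $\mathbb{P}^{4}$ for $H^{3}=4$, a $(2,3)$-complete intersection in $\mathbb{P}^{5}$ for $H^{3}=6$, and linear sections of Grassmannians or orthogonal/symplectic Grassmannians for larger $H^{3}$), and the Hilbert schemes of lines and of conics on such $X$ are well understood in the literature.  These supply the base cases $d = 1, 2$ of the induction.

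\medskip

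The existence of the three types of components is the easier half.  The multiple covers of a fixed line or conic form a component of the expected dimension by a direct count combining the $(2(d/e) + 1)$-dimensional space of degree $d/e$ maps $\mathbb{P}^{1} \to \mathbb{P}^{1}$ with the dimension of the Hilbert scheme of lines (resp.\ conics).  For the dominant family I would inductively smooth a chain consisting of a free rational curve of degree $d-1$ attached to a general line through a general point of the curve; the inductive hypothesis provides the free degree $d-1$ curve, and standard deformation theory yields a free smoothing whose moduli has the expected dimension $d+3$.  The uniqueness of the dominant component is then a consequence of the fact that any two free rational curves on $X$ are connected through free deformations.

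\medskip

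The main content of the theorem is to show that no other components exist.  Let $M$ be a hypothetical component whose general member is neither free nor a multiple cover of a line or conic.  Then the universal image sweeps out a proper closed subvariety $Y \subsetneq X$.  If $\dim Y = 1$, the general member of $M$ is a multiple cover of a rational curve in $X$ of degree at least $3$, and one must rule out that such a curve contributes an unexpected component; this is handled by the classification of low-degree rational curves on $X$ from the base cases.  If $\dim Y = 2$, the surface $Y$ is ruled by a family of rational curves whose general member is non-free, forcing $Y$ to have large $a$-invariant in the sense of Geometric Manin's Conjecture, and the key step is to prove that on a general $X$ the only such surfaces are those swept by lines and conics which already contribute to the components listed.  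This last step is the main obstacle: it demands a genus-by-genus geometric classification of ruled and conic-ruled surfaces in $X$, with the low-genus cases $g \in \{3, 4, 5\}$ expected to be the most intricate because the relevant Hilbert schemes carry the richest geometry.  Once these surfaces are excluded, a bend-and-break argument degenerates any member of $M$ into a reducible chain of rational curves of smaller degree, and the inductive hypothesis forces each component of the chain to be a line or a conic already accounted for, closing the induction.
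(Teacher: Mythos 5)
Your broad outline follows the paper's strategy — induction on degree, base cases from the Hilbert schemes of lines and conics, classification of surfaces with large $a$-invariant to constrain non-dominant families, and bend-and-break to close the induction — but there are two substantial gaps in the part that is actually hard.

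First, the uniqueness of the dominant component is not an elementary consequence of ``any two free rational curves on $X$ are connected through free deformations.'' That assertion is precisely what has to be proved, and it is where most of the paper's work lies. Ordinary bend-and-break only degenerates a free curve to a stable map from a reducible domain; it gives no control over whether the resulting pieces are free, and in our setting there are components of $\Mor(\mathbb{P}^{1},X)$ of larger than expected dimension (multiple covers of lines and conics) to which the degenerate curve could a priori belong. The paper's central technical result, the Movable Bend and Break lemma (Theorem \ref{theo: movbendandbreak}), shows that one can degenerate a general free curve into a chain of \emph{two free} curves, and this requires a careful classification of all degenerations passing through the right number of general points (Corollary \ref{coro: genpointsclassification}), which in turn uses the very-freeness and balanced normal bundle results of Lemma \ref{lemm: veryfree} and Lemma \ref{lemm: evaluation}. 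Your proposal treats this as routine.

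Second, even granting the breaking statement, gluing two free curves back into a single component requires knowing that the fiber of the evaluation map $\mathrm{ev}_1: M^{(1)} \to X$ over a general point is \emph{irreducible} (Theorem \ref{theo: irrfibers}). This is established via the $a$-cover analysis of Section \ref{sec: a-covers} — in particular, the result that the only $a$-cover of $X$ rationally factors through the universal conic — and this analysis depends on the surface classification in Sections \ref{sec: hyperplane}--\ref{sec: Manininvariants}, not just on ruling out large-$a$ surfaces. Your proposal reduces the surface classification to a case-by-case geometric problem but does not identify either the $a$-cover input or the irreducibility of evaluation fibers as necessary steps. Finally, your proposed existence argument (smoothing a line attached to a degree $d-1$ free curve) is a reasonable alternative to the paper's gluing of two free curves, but again it does nothing for uniqueness, and the base cases of the induction in the paper actually begin at $d = 3, 4$ (cubics and quartics, handled by separate geometric arguments in Section \ref{sec: lowdegreecurves}) rather than at lines and conics, precisely because Movable Bend and Break only starts at $d \geq 5$.
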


This result naturally complements \cite[Theorem 7.9]{LT17} which classifies all components of $\Mor(\mathbb{P}^{1},X)$ for ``most'' Fano threefolds of Picard rank $1$ and index $2$.  (The higher index cases are of course well understood.)

Our approach to Theorem \ref{theo: maintheorem} is motivated by Manin's Conjecture.  Manin's Conjecture predicts that the asymptotic behavior of components of $\Mor(\mathbb{P}^{1},X)$ as the degree increases is controlled by two geometric invariants $a, b$ (first appearing in this context in \cite{BM}).  Thus the first step in understanding components of $\Mor(\mathbb{P}^{1},X)$ is to completely understand the behavior of the $a, b$-invariants for subvarieties and covers of $X$.  This goal is accomplished in Sections \ref{sec: geometricinvariants} -- \ref{sec: a-covers}.  The main input is an explicit analysis of the possible singularity types for hypersurfaces in $X$ of low degree.  We then use the geometric theory of the $a,b$-invariants which has been developed in the series of papers \cite{HTT15}, \cite{LTT14}, \cite{HJ16}, \cite{LT16}, \cite{LT17}, \cite{Sen17}, \cite{Sen17b}, \cite{LST18}.

The proof of Theorem \ref{theo: maintheorem} runs by induction on degree.  The induction follows the same general framework developed by \cite{HRS04} via breaking and gluing rational curves.  However, it seems difficult to apply the techniques of \cite{HRS04} directly due to the existence of components of $\Mor(\mathbb{P}^{1},X)$ with higher than expected dimension.  Instead, we systematically use the $a,b$-invariants to reorganize the argument.

As demonstrated by \cite[Theorem 1.1]{LT17} the  $a$-invariant can be used to completely classify components of $\Mor(\mathbb{P}^{1},X)$ of larger than expected dimension.  This allows us to focus our attention on dominant families of rational curves.  In order to remain in this framework while breaking curves, we must show that a free curve can be deformed to a chain of free curves.  We call this result the ``Movable Bend and Break'' lemma; the proof relies on results of \cite{Shen10} and is modeled after ideas of \cite{Testa05}.

\begin{theo}{\textnormal{(Movable Bend and Break lemma)}}
Let $X$ be a smooth Fano threefold of Picard rank $1$ and index $1$ such that $-K_{X}$ is very ample and $X$ has degree $4 \leq H^{3} \leq 18$.
Assume that $X$ is general in its moduli.
Fix $d \geq 4$ and let $M$ be a component of $\overline{M}_{0,0}(X, d)$ parametrizing a dominant family of maps which are generically birational maps from $\mathbb{P}^{1}$ to its image. Then $M$ contains a codimension $1$ locus representing stable maps which are a union of two free curves of smaller degree.
\end{theo}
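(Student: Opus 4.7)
The plan is to apply Mori's bend-and-break lemma to produce a reducible degeneration in $\overline{M}$, then use generic position together with the classification of components for anticanonical degree less than $d$ (the inductive hypothesis in the proof of Theorem \ref{theo: maintheorem}) to identify both pieces as free curves, and finally promote this existence to a codimension-one locus via a smoothing and dimension count.

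Concretely, fix a general point $p_1 \in X$ and, if $d \geq 5$, a second general point $p_2 \in X$. The sublocus $M(p_1) \subset M$ of curves through $p_1$ has dimension $d - 2 \geq 2$, and $M(p_1, p_2)$ has dimension $d - 4 \geq 1$ when $d \geq 5$. A projective one-parameter family $T$ inside the closure of this sublocus, together with Mori's bend-and-break lemma (valid as $d \geq \dim X + 1 = 4$ in the one-pointed case and $d \geq \dim X + 2 = 5$ in the two-pointed case), produces a reducible stable map $f : C_1 \cup C_2 \to X$ in $\overline{M}$ where each fixed point lies on some component $C_i$. By induction each $C_i$ is either a member of the dominant family of its anticanonical degree, a cover of a line, or a cover of a conic. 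Since the lines on $X$ sweep out a proper surface $S_L \subset X$, no line passes through a general point, so no $C_i$ containing a fixed point is a cover of a line. Likewise no conic passes through two general points, and a direct dimension count in $X \times X$ shows that $\{(p_1, p_2) : \Sigma_{p_1} \cap \Sigma_{p_2} \neq \emptyset\}$ (where $\Sigma_p = \bigcup_{Q \ni p} Q$ is the curve swept by conics through $p$) is a proper closed subset; this rules out both components being conic covers. The remaining mixed configurations --- one dominant and one cover of a conic or line --- are excluded using the $a, b$-invariant estimates from Sections \ref{sec: geometricinvariants}--\ref{sec: a-covers} (building on Shen's results), which bound the corresponding boundary strata of $\overline{M}$ by dimension strictly less than $d - 1$.

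Given one such free-plus-free reducible stable map in $\overline{M}$, consider the incidence variety
\[
\mathcal{I} = \{(f_1, f_2, q) : f_i \text{ free of degree } d_i,\ q \in f_1(\mathbb{P}^1) \cap f_2(\mathbb{P}^1)\},
\]
which has dimension $(d_1 + 1) + (d_2 + 1) - 3 = d - 1$ via the fiber product of the two evaluation maps. Koll\'ar's smoothing theorem for nodal free rational curves implies that every point of $\mathcal{I}$ lies in the closure of some dominant component of $\overline{M}_{0,0}(X, d)$; combined with the example already in $\overline{M}$ and the irreducibility of $\mathcal{I}$, the entire family is contained in $\overline{M}$, giving the desired codimension-one locus. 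The main obstacle I anticipate is excluding the mixed free-plus-conic-cover case, where the naive dimension count does not suffice and one must appeal to the refined $a, b$-invariant analysis of the earlier sections to show that such boundary strata are too small to fill the codimension-one boundary of $\overline{M}$.
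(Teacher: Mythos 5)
Your overall template---bend-and-break, classify the pieces, smooth back, then promote to a codimension-one locus---is the right skeleton, but the central step is under-constrained in a way that the paper works hard to fix, and the patch you propose (excluding bad configurations by $a,b$-invariant estimates) is not how the paper proceeds and, as far as I can see, does not work.

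The gap is in the bend-and-break step. You fix only one or two general points. With so few constraints, the reducible limit $C_1 \cup \cdots \cup C_k$ can have an arbitrary number of components of arbitrary degrees, and only the one or two components containing $p_1$ (and $p_2$) are constrained to pass through a general point. The remaining components are free to be multiple covers of lines or conics, high-degree covers, or chains of such; the inductive hypothesis (which classifies \emph{components} of $\mathrm{Mor}(\mathbb{P}^1, X)$, not individual degenerate maps) gives you no handle on them, and your observations that lines miss a general point and conics miss two general points only rule out the components carrying those points, not the others. Your proposed escape---''the refined $a,b$-invariant analysis shows these strata are too small''---is not substantiated, and in fact configurations such as ``degree $d-1$ free curve plus a line'' genuinely occur as a codimension-one degeneration in $M$ and cannot be excluded; the paper has to handle them head on (by breaking the free piece further and re-gluing using the normal bundle $N_{C_2/X} = \mathcal{O} \oplus \mathcal{O}(-1)$ of a general line).

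The paper's essential extra idea, which your proposal is missing, is to fix $\lfloor d/2 \rfloor$ general points instead of one or two. Lemma \ref{lemm: evaluation}, which relies on Shen's theorem that the normal bundle of a general rational curve in $X$ is balanced, shows that a free curve of degree $e$ through a general point of $X$ can carry at most $\lfloor e/2 \rfloor$ general points. So if the whole broken curve contains $\lfloor d/2 \rfloor$ general points, every component is forced to carry roughly its fair share, which pins down the combinatorics: this is Corollary \ref{coro: genpointsclassification}. Only then is the list of degenerate configurations short enough to analyze one by one (and the even-degree case requires a further trick: fix $n-1$ points and a general free conic $D$, since there is only a finite set of degree-$2n$ curves through $n$ general points and bend-and-break needs a positive-dimensional family). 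Your final smoothing/incidence step is fine and matches the role of Corollary \ref{coro: irreducibility}, but the middle of the argument needs the ``many general points'' device to close the case analysis.
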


Once this result is known and the analysis of $a,b$-invariants is completed, then the description of all components of the moduli space follows from a classification of low-degree components and the general theory developed by \cite{LT17}.  In fact the induction argument only begins in degree $\geq 5$, so we need to handle the cases of cubic and quartic curves separately via explicit geometric arguments (see Section \ref{sec: lowdegreecurves}).

Theorem \ref{theo: maintheorem} verifies Geometric Manin's Conjecture over the ground field $\mathbb{C}$ for this class of varieties $X$.  (In other words, it allows us to compute a counting function for sections of the trivial family $X \times \mathbb{P}^{1}$ over $\mathbb{P}^{1}$.)  In Section \ref{sec: GM} we recall the definition of the counting function for rational curves $\overline{N}(X,q,d)$ constructed by \cite{LT17}.  Theorem \ref{theo: maintheorem} verifies the expected asymptotic formula:

\begin{coro}
Let $X$ be a smooth Fano threefold of Picard rank $1$ and index $1$ such that $-K_{X}$ is very ample and $X$ has degree $4 \leq H^{3} \leq 18$.  Assume that $X$ is general in moduli.  Then
\begin{equation*}
\overline{N}(X,q,d) \sim \frac{q^{3}}{1-q^{-1}} q^{d}
\end{equation*}
as $d$ goes to $\infty$.
\end{coro}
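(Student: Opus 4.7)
The plan is to unpack the definition of $\overline{N}(X,q,d)$ given in Section \ref{sec: GM}: it is a cumulative sum
\begin{equation*}
\overline{N}(X,q,d) = \sum_{e \leq d} |M_{e}^{\mathrm{Manin}}(\mathbb{F}_{q})|,
\end{equation*}
where $M_{e}^{\mathrm{Manin}}$ is the union of those components of $\Mor(\mathbb{P}^{1}, X)$ of anticanonical degree $e$ that are not excluded by the $a,b$-invariant formalism of \cite{LT17}. Theorem \ref{theo: maintheorem} completely classifies the components in each degree $e \geq 3$: the unique dominant component, one cover-of-lines component, and (in even degree) one cover-of-conics component. The cover-of-lines components are excluded because a line $L \subset X$ satisfies $a(L, -K_{X}|_{L}) = 2 > 1 = a(X, -K_{X})$, so they are accumulating. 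The cover-of-conics components have $a$-invariant equal to $a(X)$ but their defining map factors through a non-dominant family of conics; they are $a$-covers in the sense of \cite{LST18} whose $b$-invariant strictly exceeds $b(X) = 1$, and so are also removed from the count.

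With only the dominant components contributing, the computation is straightforward. For every $e \geq 3$ the dominant component $M_{e}$ is geometrically irreducible of dimension $e + 3$, so the Lang--Weil bound gives
\begin{equation*}
|M_{e}(\mathbb{F}_{q})| = q^{e+3} + O(q^{e + 5/2}).
\end{equation*}
The contributions from the finitely many low degrees $e \leq 2$ are bounded by a polynomial in $q$ independent of $d$. Summing the geometric series,
\begin{equation*}
\overline{N}(X,q,d) = \sum_{e = 0}^{d} q^{e+3} + O(q^{d + 5/2}) = \frac{q^{d+4} - q^{3}}{q - 1} + O(q^{d + 5/2}) \sim \frac{q^{3}}{1 - q^{-1}}\, q^{d}
\end{equation*}
as $d \to \infty$, which is the claimed asymptotic.

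I expect the main obstacle to be the rigorous exclusion of the cover-of-conics components, since their dimension matches the expected dimension $e + 3$ of the dominant component and including them would alter the leading coefficient in even degrees. The correct argument passes through the universal conic $\mathcal{C} \to X$: this evaluation map is dominant and generically finite, so it constitutes an $a$-cover of $X$ whose relative Picard contribution is nontrivial. Hence its $b$-invariant is strictly greater than $b(X) = 1$, and the $a,b$-filtering of \cite{LT17} removes the corresponding components of $\Mor(\mathbb{P}^{1}, X)$ from $\overline{N}$, completing the argument.
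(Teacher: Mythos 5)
Your overall strategy is right — apply Theorem \ref{theo: maintheorem} to classify components in each degree, filter out the accumulating ones via the definition of $M_d$, and sum the resulting geometric series — but two of the ingredients are not what the paper actually uses, and one of them is genuinely incorrect.

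First, the paper does not define $\overline{N}(X,q,d)$ as $\sum_{e \leq d}|M_e^{\mathrm{Manin}}(\mathbb{F}_q)|$. In Section \ref{sec: GM} the quantity $q$ is a \emph{formal} variable and $\overline{N}(X,q,d) = \sum_{i=1}^{d}\sum_{W \in M_i} q^{\dim W}$ is a formal generating function recording dimensions of components, not a point count over a finite field. Invoking Lang--Weil is therefore superfluous, and in fact problematic: for $q$ fixed the error term $O(q^{e+5/2})$ differs from the main term $q^{e+3}$ only by the constant factor $q^{-1/2}$, and the implied constant depends on the geometry of $M_e$, so the summed error is not $o(q^{d+3})$ without further uniformity input. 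The paper needs none of this; once $M_d$ is shown to be a single component of dimension $d+3$ for all large $d$, the formula $\overline{N}(X,q,d) \sim \frac{q^3}{1-q^{-1}}q^d$ is immediate from summing $q^{i+3}$.

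Second, and more seriously, your mechanism for discarding the cover-of-conics components is wrong. You assert that the universal conic family $s\colon\mathcal{U}\to X$ has $b(\mathcal{U}, s^*(-K_X)) > 1$ and is removed via condition (4). But the Remark following Theorem \ref{theo: weaklybalanced for Fano 3-folds} states explicitly that for any generically finite $f\colon Y\to X$ not factoring through $Z$ with $a(Y,f^*H)\geq a(X,H)$ one has $b(Y,f^*H)=1$; this applies to $\mathcal{U}$, so $b(\mathcal{U})=1=b(X)$ and condition (4), which requires a strict lexicographic inequality, does not trigger. The correct exclusion is via condition (3): by the proof of Theorem \ref{theo: a-covers} the adjoint divisor $K_{\mathcal{U}} + a(\mathcal{U}, s^*L)s^*L$ has Iitaka dimension $2 > 0$, so $\mathcal{U}$ is not adjoint rigid, and any component whose universal curve factors through $s\colon\mathcal{U}\to X$ (such as the cover-of-conics components) is removed. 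Similarly, the cover-of-lines components are excluded directly by condition (2), since they factor through the surface $Z$ of dimension $2 < 3$; the $a$-invariant computation $a(\text{line},-K_X)=2$ is the underlying philosophy but is not what the filtration in Section \ref{sec: GM} literally invokes.
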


Finally, in Section \ref{sec: gw} we prove that certain Gromov-Witten invariants on $X$ are enumerative.  We focus on the invariants $\langle [pt]^{n} \rangle_{0,n}^{X,2n}$ computing Gromov-Witten invariants with point insertions.  We show that the only contributions to this invariant come from the ``main'' component parametrizing very free curves and thus deduce that the invariants represent actual curve counts.

\begin{exam}[\cite{GOLYSHEV}]
If $X$ is a general quartic threefold then the number of rational quartic curves on $X$ passing through two general points is $1145232$.  If $X$ is a general Fano threefold of degree $18$ then there are only $27$ such curves.
\end{exam}

\noindent
{\bf Acknowledgements:}
The authors would like to thank Brendan Hassett and Yuri Tschinkel for useful discussions and answering our questions about the Abel-Jacobi map.
They would also like to thank Lars Halvard Halle and Mingmin Shen for their help regarding Hilbert-flag schemes.
They thank Dave Anderson for some suggestions concerning homogeneous varieties.  They are grateful to Eric Riedl and Ivan Cheltsov for several helpful conversations.  They also thank Damiano Testa for useful suggestions. We would like to thank Brendan Hassett and Marta Pieropan for comments on an early draft of this paper, and in particular Brendan for suggesting \cite{KS04} as a reference for low degree rational curves on the degree $22$ Fano $3$-fold.  We thank anonymous referees for careful reading, detailed suggestions, and suggesting we consider Gromov-Witten invariants, and Qile Chen for a useful conversation about them.
Brian Lehmann is supported by NSF grant 1600875.  Sho Tanimoto is partially supported by Lars Hesselholt's Niels Bohr professorship and by MEXT Japan, Leading Initiative for Excellent Young Researchers (LEADER), Inamori Foundation, and JSPS KAKENHI Early-Career Scientists Grant numbers 19K14512.

\section{Preliminaries}
\label{sec: pre}

Throughout we work over $\mathbb{C}$.  Varieties are assumed to be irreducible and reduced.  
The following lemma is extremely useful and is used freely throughout the paper.

\begin{lemm}
\label{lemm: glued components}
Let $X, Y, Z$ be schemes of finite type over $\mathbb C$ with morphisms $f: X\rightarrow Y$ and $g: Z \rightarrow Y$. Suppose that (i) $X, Y, Z$ are irreducible, (ii) $f, g$ are dominant and flat, and (iii) a general fiber of $g$ is purely $d$-dimensional. Let $N \subset X \times_Y Z$ be an irreducible component. Then the natural map $N \rightarrow X$ is dominant and $N$ has dimension $\dim X + d$. Furthermore, assume that a general fiber of $g$ is also irreducible. Then $X \times_Y Z$ is irreducible.
\end{lemm}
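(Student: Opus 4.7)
The plan is to study the projection $p \colon X \times_Y Z \to X$, which is the base change of $g$ by $f$ and hence flat. Because $Y$ and $Z$ are irreducible and of finite type over $\mathbb{C}$, the flatness of $g$ forces every non-empty fiber to have pure dimension $\dim Z - \dim Y$, which by hypothesis equals $d$; this dimension property is preserved under base change, so every non-empty fiber of $p$ is purely $d$-dimensional. A point $(x,z)$ lies in $X \times_Y Z$ iff $f(x) = g(z)$, and since $f, g$ are dominant and flat (hence open), $p$ is surjective onto the dense open subset $f^{-1}(g(Z)) \subset X$.

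To establish that every irreducible component $N \subset X \times_Y Z$ has dimension $\dim X + d$ and maps dominantly onto $X$, I would invoke the local dimension formula for flat morphisms: at every point $w$, one has $\dim_w(X \times_Y Z) = \dim_{p(w)} X + \dim_w p^{-1}(p(w)) = \dim X + d$. Hence $\dim N = \dim X + d$. Since all fibers of $p$ have dimension at most $d$, upper semicontinuity of fiber dimension applied to $p|_N$ forces $\dim \overline{p(N)} \geq \dim X$, so $p|_N$ is dominant.

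For the final claim, assume in addition that a general fiber of $g$ is irreducible. Because $f$ is dominant, a general $x \in X$ has $f(x)$ in the open locus of $Y$ over which the fibers of $g$ are irreducible, so $p^{-1}(x) = g^{-1}(f(x))$ is irreducible of dimension $d$ for general $x$. If $N_1, N_2$ are two irreducible components, both dominate $X$ and both have dimension $\dim X + d$, so for general $x \in X$ each intersection $N_i \cap p^{-1}(x)$ is a non-empty closed subset of pure dimension $d$ inside the irreducible fiber $p^{-1}(x)$, and therefore equals it. This gives $p^{-1}(U) \subseteq N_1 \cap N_2$ for some dense open $U \subset X$. Since $p^{-1}(U)$ is a dense open of $X \times_Y Z$ (it meets each component because each component dominates $X$), one concludes $N_1 \cap N_2 = X \times_Y Z$, i.e.\ $N_1 = N_2$.

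The main technical input is the dimension formula for flat morphisms; beyond that, the argument reduces to careful bookkeeping of dense open subsets and a standard generic-fiber argument, so I do not anticipate any serious obstacle.
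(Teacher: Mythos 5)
The paper states this lemma without proof, so there is no argument to compare against; evaluated on its own terms, your proof is correct. The two key ingredients — the local dimension formula for flat morphisms of finite type over a field, which gives $\dim_w(X \times_Y Z) = \dim X + d$ at every point (so each component has dimension exactly $\dim X + d$ and, since fibers of $p$ have dimension at most $d$, must dominate $X$), and the generic-fiber argument showing that two components through a general irreducible fiber must both contain that whole fiber and hence coincide — are exactly the standard tools for this statement, and you have assembled them correctly. One small remark: in the irreducibility step you do not actually need $N_i \cap p^{-1}(x)$ to be of \emph{pure} dimension $d$; it suffices that it has dimension $d$, since a closed subset of dimension $d$ inside an irreducible $d$-dimensional fiber must be the whole fiber. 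Your appeal to upper semicontinuity to deduce dominance is also slightly heavier than needed — the inequality $\dim N \leq \dim\overline{p(N)} + d$ for a morphism with fibers of dimension $\leq d$ already suffices — but the conclusion is the same.
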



\subsection{Classification of smooth Fano threefolds of index $1$ and Picard rank $1$}

\begin{theo}[\cite{iskov}]
\label{theo: classification}
Let $X$ be a smooth Fano threefold of index $1$ with $\mathrm{Pic} (X) = \mathbb Z H$ for an ample divisor $H$.
Then $X$ is one of the following threefolds:
\begin{enumerate}
\item when $H^3 = 22$, $X$ is the zero locus of three sections of the rank $3$ bundle $\bigwedge^2 \mathcal U^\vee$ where $\mathcal U$ is the universal subbundle on $\mathrm{Gr}(3, 7)$;
\item when $H^3 = 18$, $X$ is a section of the homogeneous space $G_2/P \subset \mathbb P^{13}$ by a linear subspace of codimension $2$;
\item when $H^3 = 16$, $X$ is a section of the symplectic Lagrangian Grassmannian $\mathrm{LGr}(3, 6) \subset \mathbb P^{13}$ by a linear subspace of codimension $3$;
\item when $H^3 = 14$, $X$ is a section of $\mathrm{Gr}(2, 6) \subset \mathbb P^{14}$ by a linear subspace of codimension $5$;
\item when $H^3 = 12$, $X$ is a section of a connected component of the orthogonal Lagrangian Grassmannian $\mathrm{OGr_+(5, 10)}\subset \mathbb P^{15}$ by a linear subspace of codimension $7$;
\item when $H^3 = 10$, $X$ is either
\begin{enumerate}
\item a section of $\mathrm{Gr}(2, 5)\subset \mathbb P^9$ by a linear subspace of codimension $2$ and a quadric, or
\item the double cover of a section of $\mathrm{Gr}(2, 5)\subset \mathbb P^9$ by a linear subspace of codimension $3$ branched in a divisor in the anticanonical system;
\end{enumerate}
\item when $H^3=8$, $X$ is a complete intersection of three quadrics in $\mathbb P^6$;
\item when $H^3 = 6$, $X$ is a complete intersection of a quadric and cubic in $\mathbb P^5$;
\item when $H^3 =4$, $X$ is either
\begin{enumerate}
\item a quartic hypersurface in $\mathbb P^4$, or
\item the double cover of a smooth quadric branched in an intersection with a quartic;
\end{enumerate}
\item when $H^3=2$, $X$ is the double cover of $\mathbb P^3$ branched in a sextic surface
\end{enumerate}
\end{theo}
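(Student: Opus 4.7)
The plan is to follow the classical program of Iskovskikh, with Mukai's vector-bundle refinement in the higher-genus range. Define the genus $g$ by $H^3 = 2g - 2$; Riemann-Roch together with Kodaira vanishing gives $h^0(X, H) = g + 2$ and, via a Reider- or Fujita-type analysis, pins down exactly when $|H|$ is base-point free, very ample, or defines a double cover. The first step is to show that $g$ lies in $\{2, 3, \ldots, 10, 12\}$; the exclusion of $g = 11$ is a subtle numerical point that ultimately requires knowing there is no Fano threefold with $H^3 = 20$, and is most cleanly handled through Mukai's bundle construction.

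Next, handle the low-genus cases by direct projective analysis. When $g = 2$ (degree $2$) the linear system $|H|$ is base-point free but not very ample, and one checks that the associated morphism realizes $X$ as a double cover of $\mathbb{P}^{3}$ branched along a sextic. When $g = 3$ (degree $4$), a careful look at the behavior of $|H|$ at special loci shows $X$ is either a smooth quartic in $\mathbb{P}^{4}$ or the branched double cover of a smooth quadric. For $g \geq 4$ one first proves that $|H|$ is very ample and $X \subset \mathbb{P}^{g+1}$ is projectively normal and arithmetically Gorenstein; the cases $g = 4, 5$ (degrees $6, 8$) are then settled by a Petri-style analysis of the homogeneous ideal of $X$, yielding the quadric-cubic and triple-quadric complete intersections.

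The main obstacle is the range $g \geq 6$, where $X$ is not a complete intersection and one must realize it as a linear section of a homogeneous variety. Here the plan is to apply Mukai's approach: use Serre's construction starting from a suitable line or conic on $X$ to produce a stable rank $2$ vector bundle $\mathcal{E}$ with prescribed Chern classes, prove that $\mathcal{E}$ is globally generated with the expected $h^{0}$, and show that the induced classifying map into an appropriate Grassmannian embeds $X$ as a transverse linear section of $\mathrm{Gr}(3,7)$, $G_{2}/P$, $\mathrm{LGr}(3,6)$, $\mathrm{Gr}(2,6)$, or $\mathrm{OGr}_{+}(5,10)$ as listed. For the $g = 6$ (degree $10$) case, the existence of two distinct families is detected by whether the Mukai bundle's discriminant vanishes, giving either the quadric section of a linear slice of $\mathrm{Gr}(2,5)$ or the branched double cover.

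The hardest step throughout will be the existence of the requisite rank $2$ bundles and the verification that the classifying maps are embeddings rather than degenerate; this rests on deep properties of ACM bundles on Fano threefolds and the geometry of lines on $X$, which is nontrivial precisely because one cannot assume the generality hypothesis used elsewhere in this paper. Since the argument is lengthy and well-documented, we simply invoke the results of Iskovskikh and Mukai and refer the reader to the monograph of Iskovskikh and Prokhorov for a unified treatment.
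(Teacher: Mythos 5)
The paper offers no proof of this theorem: it is stated as a citation to Iskovskikh, so there is no internal argument to compare against. Your sketch is therefore not an alternative route to the paper's proof but a summary of the classical argument that the citation points to, and as such it is broadly accurate. You correctly separate the low-genus range ($g \leq 5$), where Iskovskikh's original projective analysis of $|H|$ (base-point freeness, very ampleness, projective normality, and the resulting complete-intersection or double-cover description) suffices, from the high-genus range ($g \geq 6$), which is most transparently handled by Mukai's vector-bundle method realizing $X$ as a linear section of a homogeneous Mukai variety. You also correctly flag the exclusion of $g = 11$ as the delicate numerical point.

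Two small imprecisions worth noting. First, for $g = 6$ the dichotomy between the two families (6a) and (6b) is not usually phrased as vanishing of a ``discriminant'' of the Mukai bundle; the standard description (Gushel--Mukai) is that the classifying map to $\mathrm{Gr}(2,5)$ given by the rank-$2$ Mukai bundle is either a closed embedding (giving the quadric section of a codimension-$2$ linear section) or a finite degree-$2$ morphism onto a codimension-$3$ linear section (giving the double cover). Second, Iskovskikh's original exclusion of $g = 11$ went through the double projection from a line rather than Mukai bundles, though Mukai's later approach does give a cleaner uniform treatment. Since you explicitly defer the substantive work to Iskovskikh and Mukai (and to Iskovskikh--Prokhorov), which is exactly what the paper does, your proposal is consistent with the paper's treatment of this statement.
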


In this paper, we focus on Fano threefolds $X$ such that $-K_X$ is very ample and the degree is $< 22$.  This excludes cases 1, 6(b), 9(b), and 10.  We emphasize that in the succeeding sections the precise assumptions on $X$ will vary. 

The genus $g$ of a Fano threefold of Picard rank $1$ and index $1$ is determined by the degree via the equation $d = 2g-2$.  Each threefold such that $-K_{X}$ is very ample has a natural embedding into $\mathbb{P}^{g+1}$.  We will use $H$ to represent either the hyperplane class on $\mathbb{P}^{g+1}$ or its restriction to $X$ depending on the circumstance.

\section{Hyperplane sections of Fano threefolds}
\label{sec: hyperplane}

In our analysis of a Fano threefold $X$ it will be crucial for us to understand the singularities of elements of $|-K_{X}|$.  In this section, we record for later use some properties about these divisors using the classification.  We will mostly consider the case  when $-K_{X}$ is very ample and the degree is $\leq 8$.  

\begin{lemm}
\label{lemm: normality for H low degree}
Let $X$ be a smooth Fano threefold of Picard rank $1$ and index $1$ such that $-K_{X}$ is very ample and $X$ has degree $4 \leq H^{3} \leq 8$.  Then any member of $|H|$ is integral and normal.
\end{lemm}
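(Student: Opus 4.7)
The plan is to verify integrality and normality of an arbitrary $Y \in |H|$ separately.

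For integrality, smoothness of $X$ together with $\Pic(X) = \mathbb Z H$ implies that every prime Weil divisor on $X$ is Cartier of class $aH$ for some $a \geq 1$. Decomposing $Y = \sum m_i Y_i$ into its distinct prime components with $Y_i \sim a_i H$ then gives $\sum m_i a_i = 1$, which forces a single reduced prime component. Hence $Y$ is integral.

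For normality I would apply Serre's criterion and verify $S_2$ and $R_1$. The $S_2$ condition is automatic: $X$ is smooth, hence Cohen--Macaulay, and the Cartier divisor $Y$ is cut by a single regular element, so $Y$ is Cohen--Macaulay as well. The substantive content is $R_1$: since $\dim Y = 2$, this reduces to showing that $\mathrm{Sing}(Y)$ is at most zero-dimensional. Smoothness of $X$ together with the Jacobian criterion identifies $\mathrm{Sing}(Y)$ with the tangency locus $\{x \in X : T_x X \subset H\}$, so the problem becomes: no hyperplane of $\mathbb P^{g+1}$ is tangent to $X$ along a positive-dimensional subvariety.

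To bound the tangency locus I would invoke Zak's theorem on tangencies applied to the smooth non-degenerate threefold $X \subset \mathbb P^{g+1}$: for any hyperplane $H$ one has $\dim\{x \in X : T_x X \subset H\} \leq g - 3$. For $H^3 = 4$ (so $g = 3$) this gives $\dim \leq 0$ and finishes that case. For $H^3 = 6, 8$ the Zak bound is only $\leq 1$ and $\leq 2$, and one must use the explicit presentation of $X$ as a complete intersection of type $(2,3)$ or $(2,2,2)$ in $\mathbb P^{g+1}$. Assuming for contradiction that $H = V(\ell)$ is tangent along a curve $C \subset X$, the Jacobian criterion translates the tangency into the identical vanishing of the global section $d(\ell|_X)$ of $\Omega_X(H)$ along $C$; since $\Omega_X(H)$ has rank $3 = \dim X$, the expected vanishing locus of such a section is zero-dimensional. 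Comparing this expected behavior with the assumed one-dimensional vanishing along $C$, via a Chern-class count together with a direct analysis of the defining equations $F_1,\dots,F_r$ of $X$ and the relation $d\ell(x) \in \mathrm{span}(dF_1(x),\dots,dF_r(x))$ valid on $C$, should yield a contradiction with the smoothness of $X$ at points of $C$.

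The main obstacle is precisely this final step in the cases $H^3 = 6, 8$: Zak's theorem is no longer sharp, and one must exploit the complete intersection structure of $X$ to exclude hyperplanes tangent to $X$ along a positive-dimensional locus. The rest of the argument is formal.
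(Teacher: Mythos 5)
Your integrality argument (Picard group generated by $H$) and your overall reduction of normality to Serre's criterion and the claim that $\mathrm{Sing}(Y)$ is zero-dimensional match the paper's framing exactly. Your use of Zak's theorem on tangencies is a nice touch and does genuinely settle the case $H^3 = 4$, where $g = 3$ and the bound $\dim \leq g - 3 = 0$ is sharp; the paper does not invoke Zak, so this is a cleaner route in that one case.

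However, there is a genuine gap, which you candidly flag: the cases $H^3 = 6$ and $H^3 = 8$ are not actually proved. Zak gives $\dim \leq 1$ and $\dim \leq 2$ respectively, and your fallback plan (a Chern-class computation for the rank-$3$ bundle $\Omega_X(H)$) cannot close the argument as described. A Chern-class count only computes the \emph{expected} dimension of the zero locus of a section; it does not preclude a special section of $\Omega_X(H)$, coming from a particular hyperplane $\ell$, from vanishing along a curve $C$. To rule out such excess vanishing one needs an actual positivity or algebraic input, and this is precisely where the paper's proof has real content. The paper uses the complete intersection structure directly: assuming a singular curve $C$, it observes that along $C$ the $3 \times 6$ Jacobian of the defining quadrics (omitting the $x_0$-column dual to the hyperplane) has rank exactly $2$, forms the $2 \times 2$ minors $M_{12}, M_{13}, M_{23}$ of a suitably chosen pair of columns (sections of $\mathcal{O}(2)|_C$), and shows that the degree-$3$ section $M_{23}\frac{\partial Q_0}{\partial x_0} - M_{13}\frac{\partial Q_1}{\partial x_0} + M_{12}\frac{\partial Q_2}{\partial x_0}$ of $\mathcal{O}(3)|_C$ must vanish somewhere on $C$, which forces the full Jacobian of $(Q_0,Q_1,Q_2)$ to drop rank there, contradicting smoothness of $X$. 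Your sketch of ``the relation $d\ell(x) \in \mathrm{span}(dF_i(x))$ valid on $C$'' is morally gesturing at the same structure, but you would need to carry it through concretely in order to manufacture the contradiction; as written the proposal leaves the two hardest cases open.
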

\begin{proof}
In these cases $X$ is described as a complete intersection.  We give the proof when $H^3 = 8$; the other cases are similar.

When $H^3 = 8$, $X$ is a complete intersection of three quadrics $Q_0, Q_1, Q_2$ in $\mathbb P^6$. Choose a hyperplane $H$ in $\mathbb P^6$. Since $H$ is a generator for $\Pic (X)$, it is clear that $Y = X\cap H$ is reduced and irreducible. Thus our goal is to prove that $Y$ is normal. It suffices to show that $Y$ has only isolated singularities. Suppose that $Y$ is singular along a curve $C$. Without loss of generality, we may assume that $H$ is defined by $x_0=0$. Along $C$ the following matrix has rank $\leq 3$:
\[
\begin{pmatrix}
1 & 0&0&0&0&0&0\\
\frac{\partial Q_0}{\partial x_0} & \frac{\partial Q_0}{\partial x_1}& \frac{\partial Q_0}{\partial x_2}& \frac{\partial Q_0}{\partial x_3}& \frac{\partial Q_0}{\partial x_4} & \frac{\partial Q_0}{\partial x_5}& \frac{\partial Q_0}{\partial x_6}\\
\frac{\partial Q_1}{\partial x_0} & \frac{\partial Q_1}{\partial x_1}& \frac{\partial Q_1}{\partial x_2}& \frac{\partial Q_1}{\partial x_3}& \frac{\partial Q_1}{\partial x_4} & \frac{\partial Q_1}{\partial x_5}& \frac{\partial Q_1}{\partial x_6}\\
\frac{\partial Q_2}{\partial x_0} & \frac{\partial Q_2}{\partial x_1}& \frac{\partial Q_2}{\partial x_2}& \frac{\partial Q_2}{\partial x_3}& \frac{\partial Q_2}{\partial x_4} & \frac{\partial Q_2}{\partial x_5}& \frac{\partial Q_2}{\partial x_6}
\end{pmatrix}.
\]
However, since $X$ is smooth, the following matrix has rank at least $2$ (and hence exactly equal to $2$) along $C$:
\[
\begin{pmatrix}
\frac{\partial Q_0}{\partial x_1}& \frac{\partial Q_0}{\partial x_2}& \frac{\partial Q_0}{\partial x_3}& \frac{\partial Q_0}{\partial x_4} & \frac{\partial Q_0}{\partial x_5}& \frac{\partial Q_0}{\partial x_6}\\
\frac{\partial Q_1}{\partial x_1}& \frac{\partial Q_1}{\partial x_2}& \frac{\partial Q_1}{\partial x_3}& \frac{\partial Q_1}{\partial x_4} & \frac{\partial Q_1}{\partial x_5}& \frac{\partial Q_1}{\partial x_6}\\
\frac{\partial Q_2}{\partial x_1}& \frac{\partial Q_2}{\partial x_2}& \frac{\partial Q_2}{\partial x_3}& \frac{\partial Q_2}{\partial x_4} & \frac{\partial Q_2}{\partial x_5}& \frac{\partial Q_2}{\partial x_6}
\end{pmatrix}.
\]
Choose two columns such that the corresponding $2 \times 2$ minors do not all identically vanish along $C$.  The three minors $M_{12}, M_{13}, M_{23}$ define sections of the restriction of $\mathcal{O}(2)$ to $C$.  Note that
\[
M_{23}\frac{\partial Q_0}{\partial x_i} - M_{13} \frac{\partial Q_1}{\partial x_i}+ M_{12}\frac{\partial Q_2}{\partial x_i}
\]
vanishes identically along $C$ for any $1\leq i \leq 6$.
Then consider the following section of $\mathcal{O}(3)|_{C}$:
\[
M_{23}\frac{\partial Q_0}{\partial x_0} - M_{13} \frac{\partial Q_1}{\partial x_0} + M_{12} \frac{\partial Q_2}{\partial x_0}  
\]
This is an effective divisor on $C$ so it has to have a zero at some $y \in C$. This means that $X$ is singular at $y$, a contradiction.  
\end{proof}

\begin{lemm}
\label{lemm: noncanonicallocus_lowdegree}
Let $X$ be a smooth Fano threefold of Picard rank $1$ and index $1$ such that $-K_{X}$ is very ample and $X$ has degree $4 \leq H^{3} \leq 8$.  
Assume that $X$ is general in its moduli.  Let $|H| = (\mathbb P^{g+1})^*$ be the complete linear system of the hyperplane class $H$. Let $T \subset |H|$ be the locus of $S \in |H|$ such that $S$ is not canonical. Then $T$ has codimension at least $4$ in $|H|$.
\end{lemm}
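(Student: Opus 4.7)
The plan is to stratify $T$ by the analytic singularity type of $S$ and bound each stratum via an incidence variety computation. By Lemma~\ref{lemm: normality for H low degree} each $S \in |H|$ is normal, hence has isolated hypersurface singularities in the smooth threefold $X$, and such a singularity is canonical iff Du Val. Writing a local defining equation of $S$ at a singular point $p$ as $f = q_2 + q_3 + \cdots$ in local coordinates on $X$, the Du Val classification gives: $\operatorname{rank}(q_2) = 3$ yields $A_1$; $\operatorname{rank}(q_2) = 2$ yields some $A_n$ with $n \geq 2$; $\operatorname{rank}(q_2) = 1$ yields $D_n, E_6, E_7, E_8$ or a non-canonical singularity; and $q_2 \equiv 0$ (i.e.\ $\operatorname{mult}_p S \geq 3$) is always non-canonical. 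Thus $T \subseteq T_A \cup T_B$, where
\begin{align*}
T_A &= \{S \in |H| : \exists\, p \in X,\ \operatorname{mult}_p S \geq 3\},\\
T_B &= \{S \in |H| : \exists\, p \in X,\ \operatorname{mult}_p S = 2\ \text{and}\ \operatorname{rank}(q_2) \leq 1\ \text{at}\ p\},
\end{align*}
and it suffices to show $\operatorname{codim}_{|H|} T_A \geq 4$ and $\operatorname{codim}_{|H|} T_B \geq 4$.

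I would introduce the tangential incidence $\mathcal{W} = \{(p, H) \in X \times |H| : T_p X \subset H\}$ (where $T_pX \cong \mathbb{P}^3 \subset \mathbb{P}^{g+1}$ is the embedded tangent space and elements of $|H|$ are identified with hyperplanes in $\mathbb{P}^{g+1}$). Containing $T_pX$ imposes codimension $4$ on $|H|$, so $\mathcal{W}$ is a $\mathbb{P}^{g-3}$-bundle over $X$ of dimension $g$. There is a natural morphism $\psi : \mathcal{W} \to \mathbb{P}(\operatorname{Sym}^2 \Omega^1_{X})$ sending $(p, H)$ to the class of the quadratic part $q_2$ of $H|_X$ at $p$. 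The target is an $8$-dimensional $\mathbb{P}^5$-bundle over $X$, and its fiberwise rank-$\leq 1$ locus is a $5$-dimensional (Veronese) sub-bundle $\mathcal{V}$.

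For $T_A$: on each fiber $\mathcal{W}_p \cong \mathbb{P}^{g-3}$ the vanishing locus of $\psi_p$ is the projectivized kernel of the linear map $\mathcal{W}_p \to \mathbb{P}^5$, and is empty iff the $2$-jet map $H^0(\mathcal{O}_X(1)) \to \mathcal{O}_{X,p}/\mathfrak m_p^3$ is injective at $p$. Since source rank is $g+2 \leq 7$ and target rank is $10$, Porteous gives that a rank drop of even $1$ has expected codimension $10 - (g+1) = 9-g \geq 4 > 3 = \dim X$; hence for $X$ general in moduli there is no such drop and $T_A = \varnothing$. For $T_B$: set $I_B = \psi^{-1}(\mathcal{V}) \subset \mathcal{W}$, so $T_B$ is the image of $I_B$ in $|H|$. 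If $\psi(\mathcal{W})$ meets $\mathcal{V}$ properly in the $8$-dimensional total space, then
$$\dim I_B \leq \dim\mathcal{W} + \dim\mathcal{V} - 8 = g + 5 - 8 = g - 3 = \dim |H| - 4,$$
so $\operatorname{codim}_{|H|} T_B \geq 4$ as required.

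The main obstacle is verifying properness of $\psi(\mathcal{W}) \cap \mathcal{V}$ for $X$ general in moduli, i.e.\ ruling out excess intersection components arising from special points $p \in X$ at which $\psi_p$ fails to be an embedding or its image is tangent to the Veronese $\mathcal{V}_p \subset \mathbb{P}^5$ along a positive-dimensional locus. For each of the three cases $H^3 \in \{4, 6, 8\}$---the quartic threefold, the intersection of a quadric and a cubic in $\mathbb{P}^5$, and the intersection of three quadrics in $\mathbb{P}^6$ from Theorem~\ref{theo: classification}---this is checked directly using the explicit defining equations, verifying that the non-generic behaviors themselves cut out proper closed subvarieties of the parameter space and are thus avoided by a general $X$.
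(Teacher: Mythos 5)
Your stratification into $T_A$ (multiplicity $\geq 3$) and $T_B$ (corank $\geq 2$) is the same underlying dichotomy the paper uses (it simply treats both as ``corank $\geq 2$''), and the dimension count for $T_B$ does give the right bound \emph{if} the intersection $\psi(\mathcal W)\cap\mathcal V$ is proper. But as written there are two genuine gaps. First, the $T_A$ argument misapplies Porteous: the Porteous formula computes the \emph{class} of a degeneracy locus and gives an upper bound for codimension when the locus is nonempty; expected codimension exceeding $\dim X$ does not by itself force emptiness, and even for a general $X$ one must separately rule out excess components. Second --- and this is the heart of the matter --- the properness of $\psi(\mathcal W)\cap\mathcal V$ is stated as ``the main obstacle'' and deferred to an unspecified case-by-case check with the defining equations. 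That check is precisely the nontrivial content of the lemma; without it the proposal proves a conditional statement.

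It is worth noting how the paper avoids this burden. Instead of fixing $X$, it works in the flag Hilbert scheme of triples $(X,H,p)$ with $X$ varying in moduli, writes the quadratic part of the singularity of $H\cap X$ at $p$ explicitly as $b_1q_1+b_2q_2+b_3q_3$ (restricted to a normal slice), and observes that as the defining data of $X$ and the coefficients $b_i$ vary freely, this form sweeps out \emph{all} quadratic forms. The codimension-$3$ condition ``corank $\geq 2$'' on quadratic forms therefore pulls back to codimension $\geq 3$ in the flag scheme automatically; combined with the generically finite projection onto the (codimension-$1$) locus of singular pairs $(X,H)$, this gives codimension $\geq 4$ over moduli, and a general-fiber-dimension argument then yields codimension $\geq 4$ in $|H|$ for a general $X$. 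Your fixed-$X$ incidence-variety setup has to re-derive this transversality per $X$, which is where the unfinished explicit verification enters. If you instead run your $\psi$ over the universal family (i.e.\ let $X$ vary as the paper does), the freeness of the quadratic part makes both the $T_A$ and $T_B$ bounds immediate, and the Porteous step becomes unnecessary.
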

\begin{proof}
Consider the flag Hilbert scheme $M$ of triples $(X,H,p)$ where $X$ is a smooth degree $d$ Fano threefold, $H$ is a hyperplane section of $X$ and $p$ is a singular point of $H$.  Recall that the projection onto the Hilbert scheme of pairs $(X,H)$ such that $H$ is singular is generically finite since by Lemma \ref{lemm: normality for H low degree} any hyperplane section can only have finitely many singularities.  It suffices to show that the sublocus $T$ consisting of those triples such that $H$ has non-canonical singularities at $p$ has codimension $\geq 3$ in $M$.  More precisely, we will show that the fiber $T_{p}$ over any point $p$ has codimension $\geq 3$ in the fiber $M_{p}$.

We write the proof carefully in the case when $X$ has degree $8$.  Then $X$ is the intersection of three smooth quadrics $Q_{i}$.  We may choose local coordinates $\{x_{j} \}_{j=1}^{6}$ in the ambient projective space so that $p$ is the point $0$ in $\mathbb{A}^{6}$ and the quadric $Q_{i}$ has equation $x_{i} + q_{i}$ for some homogeneous quadratic $q_{i}$ in the coordinates.  Consider the local analytic coordinate change
\begin{align*}
x_{i}' & = x_{i} + q_{i}(x)  & i=1,2,3 \\
x_{i}' & = x_{i}    & i=4,5,6
\end{align*}
Inverting, we see that
\begin{align*}
x_{i} & = x_{i}' - q_{i}(x') + \ldots &  i=1,2,3 \\
x_{i} & = x_{i}' & i=4,5,6
\end{align*}
The linear equation for $H$ must have the form $b_{1}x_{1} + b_{2}x_{2} + b_{3}x_{3}$ to ensure that the intersection with $X$ is singular.  Applying the analytic coordinate change and restricting $H$ to the locus where $x_{1}' = x_{2}' = x_{3}' = 0$ we see that the singularity at $p$ is locally analytically isomorphic to the hypersurface singularity
\begin{equation*}
b_{1} q_{1}(0,0,0,x_{4}',x_{5}',x_{6}') + b_{2}q_{2}(0,0,0,x_{4}',x_{5}',x_{6}') + b_{3}q_{3}(0,0,0,x_{4}',x_{5}',x_{6}') + \ldots 
\end{equation*}
in $\mathbb{C}^{3}$.  If the corank of this quadratic is $0$ or $1$ then $H$ is guaranteed to have canonical singularities at $p$ (of type $A_{m}$).  But the locus of quadratic equations of corank $\geq 2$ has codimension $\geq 3$.
The other two cases are essentially the same.
\end{proof}

In the degree $4$ case we will also need to understand singularities for elements of $|-2K_{X}|$.

\begin{lemm}
\label{lemm: normality for 2H}
Let $X$ be a smooth quartic threefold.
Suppose that $S \in |2H|$ is the restriction of a quadric of rank at least $4$. 
Then $S$ is integral and normal.
\end{lemm}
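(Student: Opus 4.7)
The plan is to handle integrality and normality separately, with the crux of normality being a line-bundle degree comparison.

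For integrality, I would use $\Pic(X) = \bZ H$ (by Lefschetz) together with the injection $H^0(\bP^4, \cO(2)) \hookrightarrow H^0(X, \cO(2))$, which holds because $H^0(\cI_X(2)) = H^0(\cO(-2)) = 0$. Any effective decomposition of $S \sim 2H$ on $X$ must take the form $H_1 + H_2$ or $2D$ with $H_i, D \in |H|$; pulling back through the injection, the defining quadric $Q$ would factor in $\bP^4$ as $L_1 L_2$ or $L^2$ for some linear forms, giving $\rk Q \leq 2$ and contradicting the hypothesis $\rk Q \geq 4$.

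For normality, the first step is a standard reduction: $S$ is a Cartier divisor on the smooth variety $X$, hence Gorenstein and in particular $S_2$, so normality is equivalent to $R_1$, i.e. to showing that $\mathrm{Sing}(S)$ has dimension $\leq 0$. Suppose for contradiction that $\mathrm{Sing}(S)$ contains an irreducible curve $C$; let $F$ denote the quartic defining $X$ and let $A$ be the symmetric $5 \times 5$ matrix of $Q$. Along $C$, the $2 \times 5$ Jacobian with rows $\nabla F$ and $\nabla Q$ has rank at most $1$; since $\nabla F$ is nowhere zero on $X$ by smoothness, the rank is exactly $1$, and therefore $[\nabla Q(x)] = [\nabla F(x)]$ in $\bP^4$ at every $x \in C$ with $\nabla Q(x) \neq 0$.

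The core step is to compare two morphisms from the normalization $\nu : \tilde C \to C$ to $\bP^4$. The Gauss map $\gamma : X \to \bP^4$, $x \mapsto [\nabla F(x)]$, is a morphism because $X$ is smooth, and satisfies $\gamma^* \cO(1) = \cO_X(3)$. The polar map $\phi : \bP^4 \dashrightarrow \bP^4$, $x \mapsto [Ax]$, has base locus $\bP(\ker A)$ of dimension $\leq 0$ since $\rk A \geq 4$. Composing with $\nu$ and using that any rational map from a smooth projective curve to a projective variety extends to a morphism, we obtain morphisms $\tilde \gamma, \tilde \phi : \tilde C \to \bP^4$ agreeing on a dense open subset, hence everywhere by separatedness. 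Their pullbacks of $\cO(1)$ must therefore coincide as line bundles on $\tilde C$: $\tilde \gamma^* \cO(1) = \cO_{\tilde C}(3H|_{\tilde C})$ has degree $3 \deg_{\bP^4}(C)$, while $\tilde \phi^* \cO(1) = \cO_{\tilde C}(H|_{\tilde C} - E)$ for some effective divisor $E$ (supported on the preimage of the base locus of $\phi$), of degree $\deg_{\bP^4}(C) - \deg E$. Equating yields $\deg E = -2 \deg_{\bP^4}(C) < 0$, contradicting effectivity.

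The main obstacle I anticipate is the rank-$4$ case, where $\phi$ carries an honest base point that may lie on $C$; this is precisely why the argument is phrased on the normalization $\tilde C$, so that the base-point contribution is absorbed into the effective divisor $E$ appearing with a negative sign, and the direction of the final contradictory inequality is preserved.
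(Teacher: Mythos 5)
Your proof is correct. For normality, the core idea is the same as the paper's: along a putative curve $C \subset \mathrm{Sing}(S)$ the linear gradient $\nabla Q$ must be proportional to the cubic gradient $\nabla F$, and the mismatch in degree produces the contradiction. The paper packages this by sending $x \in C$ to the unique $(\lambda(x):\mu(x)) \in \mathbb{P}(3,1)$ killing the rank-one Jacobian row space, arguing the resulting map is non-constant (using that $Q$ has at most one singular point), hence surjective, so it hits $(0:1)$ where $\nabla F$ vanishes. Your version turns the same proportionality into an equality of morphisms $\tilde{\gamma} = \tilde{\phi} : \tilde{C} \to \mathbb{P}^4$ (Gauss map and polar map) and reads off the contradiction $\deg E = -2\deg C < 0$ from $\tilde{\gamma}^*\mathcal{O}(1) = \tilde{\phi}^*\mathcal{O}(1)$; this is arguably more transparent about where the rank hypothesis enters (to keep the base locus of $\phi$ zero-dimensional) and about why constancy is impossible. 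You also give an explicit integrality argument (a decomposition of $S$ in $|2H|$ lifts, via $H^0(\mathcal{I}_X(2))=0$, to a factorization $Q = L_1L_2$, contradicting $\mathrm{rk}\, Q \geq 4$), whereas the paper leaves integrality implicit: it reduces the whole statement to isolated singularities, tacitly invoking that a complete intersection surface is Cohen--Macaulay (so $S_2$) and connected (Hartshorne), so that $R_1$ yields normality and hence integrality. Your direct treatment is cleaner and fills a gap in exposition.
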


\begin{proof}
Suppose that $X$ is defined by a homogeneous polynomial $f_4$. To prove our statement, it suffices to show that $S$ has only isolated singularities.

Suppose that $S$ is singular along a curve $C$.
Along $C$, the following matrix has rank $1$:
\[
\begin{pmatrix}
\frac{\partial Q}{\partial x_0} &\frac{\partial Q}{\partial x_1} & \frac{\partial Q}{\partial x_2} &\frac{\partial Q}{\partial x_3} &\frac{\partial Q}{\partial x_4}\\
\frac{\partial f_4}{\partial x_0} &\frac{\partial f_4}{\partial x_1} &\frac{\partial f_4}{\partial x_2} &\frac{\partial f_4}{\partial x_3} & \frac{\partial f_4}{\partial x_4}
\end{pmatrix}
\]
In particular, for each $x\in C$, there exists a unique point $(\lambda(x):\mu (x)) \in \mathbb P(3,1)$ such that
\[
\lambda(x)\frac{\partial Q}{\partial x_i}(x) + \mu (x)\frac{\partial f_4}{\partial x_i}(x) = 0
\]
for any $0\leq i \leq 4$.
Since $Q$ is a quadric with at most one singularity, the morphism $$C \ni x \mapsto  (\lambda(x): \mu(x)) \in \mathbb P(3,1)$$ is dominant. Thus for some $y \in C$, we have $(\lambda(y): \mu(y))=(0:1)$.
This implies that $X$ is singular at $y$, a contradiction.
\end{proof}

\begin{lemm}
\label{lemm: nonnormality for 2H}
Let $X$ be a smooth quartic threefold.
Suppose that $S \in |2H|$ is an integral surface.  
\begin{itemize}
\item Suppose there is no quadric $Q$ which is singular along a line $l$ contained in $X$ such that $Q|_{X} = S$.  Then $S$ is normal.
\item Suppose there is a quadric $Q$ which is singular along a line $l$ contained in $X$ such that $Q|_{X} = S$. Then the blow-up of $S$ along this line is normal.
\end{itemize}
\end{lemm}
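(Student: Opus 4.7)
The first bullet extends the strategy of Lemma~\ref{lemm: normality for 2H}. Since $S \in |2H|$ is integral, the quadric $Q$ with $Q|_X = S$ must have rank at least $3$; if $\mathrm{rank}(Q) \geq 4$ then normality follows directly from Lemma~\ref{lemm: normality for 2H}. Otherwise $\mathrm{rank}(Q) = 3$ and $\mathrm{Sing}(Q)$ is a line $l$ which, by hypothesis, is not contained in $X$. Suppose for contradiction that $S$ has a one-dimensional singular locus $C \subset X$. If some one-dimensional component of $C$ is contained in $l$ then it must equal $l$, so $l \subset X$, contradicting the hypothesis; hence $C \not\subset \mathrm{Sing}(Q)$, and the gradient $\nabla q$ is generically nonzero on $C$. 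The rank-one relation between $\nabla q$ and $\nabla f_4$ along $C$ now defines a morphism $C \to \mathbb{P}(3,1)$; by the dominance argument of Lemma~\ref{lemm: normality for 2H} (now invoking $C \not\subset l$ in place of the finiteness of $\mathrm{Sing}(Q)$), this morphism must hit the point $(0{:}1)$. At that point $\nabla f_4 = 0$, contradicting smoothness of $X$.

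For the second bullet, let $\pi : \tilde X \to X$ be the blow-up of $l$; since $l$ is smooth in the smooth threefold $X$, $\tilde X$ is smooth. Choose local analytic coordinates $(y_1, y_2, y_3, y_4)$ on $\mathbb{P}^4$ with $l = \{y_2 = y_3 = y_4 = 0\}$; after an analytic change of coordinates we may assume $Q = \{q(y_2, y_3, y_4) = 0\}$ for a rank-$3$ quadratic form $q$ depending only on the normal coordinates. Since $X$ is smooth and contains $l$, the implicit function theorem lets us solve $f_4 = 0$ as $y_4 = h(y_1, y_2, y_3)$ with $h$ vanishing on $l$, so $S$ is cut out in the smooth chart $(y_1, y_2, y_3)$ of $X$ by $g(y_1, y_2, y_3) := q(y_2, y_3, h)$, which has order exactly $2$ along $\{y_2 = y_3 = 0\} = l$. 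Therefore $\pi^* S = \tilde S + 2E$ with $E$ the exceptional divisor, exhibiting $\tilde S$ as a Cartier divisor in the smooth threefold $\tilde X$. In particular $\tilde S$ is Cohen--Macaulay (hence $S_2$) and reduced as the strict transform of the integral $S$, so normality reduces to showing that the singular locus of $\tilde S$ is zero-dimensional.

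In the chart $y_3 = y_2 u$ of $\tilde X$ the strict transform takes the form $\tilde g = Q_2(u; y_1) + y_2 Q_3(u; y_1) + O(y_2^2)$, where $Q_2(u; y_1) = A(y_1) + B(y_1) u + C(y_1) u^2$ is the leading binary form extracted from $q$ and the linear part of $h$. At a point $(y_1, 0, u_0)$ of $\tilde S \cap E$ with $Q_2(u_0; y_1) = 0$, the surface $\tilde S$ is smooth provided that either the discriminant $\Delta(y_1) = B^2 - 4AC$ is nonzero (so $u_0$ is a simple root and $\partial_u \tilde g \neq 0$) or, when $\Delta(y_1) = 0$, at least one of $Q_3(u_0)$ and $\partial_{y_1} Q_2(u_0; y_1)$ is nonzero; the parallel chart $y_2 = y_3 v$ covers the remaining locus. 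If $\Delta$ is not identically zero this produces only finitely many non-smooth points, so the main obstacle is the degenerate case $\Delta \equiv 0$ on $l$, in which the leading transverse form is everywhere a perfect square. Here the reducedness of $S$ is decisive: if $\tilde S$ were singular along a curve, an iterative vanishing of the higher-order Taylor coefficients of $g$ in the perfect-square direction would force $g$ to be divisible by the square of a linear form, contradicting the integrality of $S$. Hence $\tilde S$ has only isolated singularities, completing the $R_1$ verification and establishing normality of the blow-up.
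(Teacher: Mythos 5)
Your first bullet is correct and mirrors the paper's intent. You correctly observe that $\mathrm{rank}(Q)\ge 3$ since $S$ is integral, dispatch $\mathrm{rank}\ge 4$ by Lemma~\ref{lemm: normality for 2H}, and, in the $\mathrm{rank}=3$ case, replace the finiteness of $\mathrm{Sing}(Q)$ by the hypothesis $l=\mathrm{Sing}(Q)\not\subset X$ to rule out $C\subset\mathrm{Sing}(Q)$; the dominance of $C\to\mathbb P(3,1)$ and the contradiction at $(0:1)$ then go through exactly as in the paper's Lemma~\ref{lemm: normality for 2H}.

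For the second bullet, your overall strategy — blow up along $l$, compute the strict transform of $S$ in local charts, and show its singularities above $l$ are isolated, with normality then following from $S_2$ — agrees with the paper in outline. Your variant of working inside $\mathrm{Bl}_l X$ and exhibiting $\tilde S$ as a Cartier divisor in a smooth threefold (hence Cohen--Macaulay) is a clean way to make the $S_2$ step explicit, and is a legitimate alternative to the paper's choice of treating $\tilde S$ as the complete intersection $\tilde Q\cap\tilde X$ inside $\mathrm{Bl}_l\mathbb P^4$. However, there is a genuine gap in your treatment of the degenerate case $\Delta\equiv 0$. You assert that ``an iterative vanishing of the higher-order Taylor coefficients of $g$ in the perfect-square direction would force $g$ to be divisible by the square of a linear form, contradicting the integrality of $S$.'' This does not follow: singularity of $\tilde S$ along a curve inside $E$ only constrains the coefficients $Q_2$ and $Q_3$ in the expansion $\tilde g=Q_2(u;y_1)+y_2Q_3(u;y_1)+y_2^2Q_4+\cdots$, namely it requires $Q_2=C(y_1)(u-u_0(y_1))^2$ (this is precisely $\Delta\equiv 0$) together with $Q_3(u_0(y_1);y_1)\equiv 0$; the condition $\partial_{y_1}Q_2(u_0;y_1)=0$ is automatic once $\Delta\equiv 0$, obtained by differentiating $Q_2(u_0(y_1);y_1)=0$. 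In particular there is no ``iteration'' through higher Taylor coefficients: the coefficients $Q_4,Q_5,\dots$ of $g$ are entirely unconstrained by singularity along $E$, so $g$ need not be divisible by $L^2$, and the integrality of $S$ is not violated. The paper's argument for this step is of a different nature: it writes the quartic as $f=x_0P_0(x_3,x_4)+x_1P_1(x_3,x_4)+x_2P_2(x_3,x_4)+\cdots$ and exploits that smoothness of $X$ along $l$ forbids a common zero of $P_0,P_1,P_2$; the contradiction when a singular point exists above every point of $l$ is derived from these $P_i$, not from reducedness of $S$. You would need to replace your integrality argument by one that genuinely uses smoothness of $X$ along $l$ (for instance, via the geometry of the family of tangent hyperplanes $T_xX$ for $x\in l$, which is encoded in the $2$-jet $h^{(2)}$ of your implicit function $h$), to close this gap. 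A secondary, more minor omission: you should note that any positive-dimensional component of $\mathrm{Sing}(\tilde S)$ not lying inside $E$ would push down to a curve in $\mathrm{Sing}(S)\setminus l$, which is ruled out by the argument of the first bullet, so that you may indeed concentrate on singularities along $E$.
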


\begin{proof}
Note that the restriction of any quadric $Q$ of rank $\leq 2$ to $X$ will not be integral.  Thus $S$ can only be the restriction of a quadric of rank $\geq 3$.  The first statement follows from the arguments of Lemma \ref{lemm: normality for 2H} and we focus on the second.  When the singular locus of the quadric is contained in $X$, $S$ is not normal and (again by the argument of Lemma \ref{lemm: normality for 2H}) the singular locus is the union of $l$ with possibly finitely many points.  We may suppose without loss of generality that the quadric is $x_{0}^{2} + x_{1}^{2} + x_{2}^{2}$ and that the equation of the quartic is
\begin{equation*}
f = x_{0}P_{0}(x_{3},x_{4}) + x_{1}P_{1}(x_{3},x_{4}) + x_{2}P_{2}(x_{3},x_{4}) + \ldots
\end{equation*}
where we omit the higher order terms in $x_{0},x_{1},x_{2}$.  Since the quartic is smooth along the line, there is no point on the line where all of the $P_{i}$ simultaneously vanish.

Consider the blow-up of $\mathbb{P}^{4}$ along $l$; we want to check for singularities on the strict transform of $S$ lying above $l$.  We will argue locally using charts.  Assigning variables $u,v,w$ on $\mathbb{P}^{2}$ to correspond with $x_{0},x_{1},x_{2}$, we first consider the chart given by variables $\frac{x_{0}}{x_{4}}, \frac{x_{3}}{x_{4}}, \frac{v}{u}, \frac{w}{u}$.  On this chart the strict transform of the quadric has the equation
\begin{equation*}
1 + (v/u)^{2} + (w/u)^{2}
\end{equation*}
so that the Jacobian is $(0,0,2v/u, 2w/u)$.  The strict transform of the quartic has equation
\begin{equation*}
\widetilde{f} = P_{0}(x_{3}/x_{4},1) + (v/u) P_{1}(x_{3}/x_{4},1) + (w/u) P_{2}(x_{3}/x_{4},1) + (x_{0}/x_{4}) \cdot g
\end{equation*}
for some polynomial $g$.
For points on the exceptional divisor $x_{0}/x_{4}=0$, we have
\begin{align*}
\frac{d\widetilde{f}}{d(x_{3}/x_{4})} = \frac{dP_{0}}{d(x_{3}/x_{4})} + (v/u) \frac{dP_{1}}{d(x_{3}/x_{4})} + (w/u) \frac{dP_{2}}{d(x_{3}/x_{4})} \\
 \frac{d\widetilde{f}}{d(v/u)} = P_{1}(x_{3}/x_{4},1) \qquad \qquad \frac{d\widetilde{f}}{d(w/u)} = P_{2}(x_{3}/x_{4},1).
\end{align*}
If the intersection has a singularity along the exceptional divisor, then at this point there must be a constant $C$ such that 
\begin{equation*}
(v/u) = C P_{1}(x_{3}/x_{4},1) \qquad \qquad (w/u) = C P_{2}(x_{3}/x_{4},1).
\end{equation*}
Substituting into the equations for the quadric and quartic, we find that
\begin{align*}
1+C^{2}P_{1}^{2} + C^{2}P_{2}^{2} & = 0 \\
P_{0} + CP_{1}^{2} + CP_{2}^{2} & = 0
\end{align*}
The first equation shows that $C \neq 0$.  Furthermore, we know that
\begin{equation*}
\frac{d\widetilde{f}}{d(x_{3}/x_{4})} = \frac{d}{d(x_{3}/x_{4})} \left( P_{0} + \frac{C}{2}P_{1}^{2} + \frac{C}{2}P_{2}^{2} \right)
\end{equation*}
must vanish at any singular point.  Assume that there is a singular point above every point of $l$.  Comparing the previous equations, we see that $P_{0}$ must be constant along this chart of $l$.  Arguing similarly along all the charts, we obtain a contradiction to the smoothness of the quartic along the line.  Thus the singularities of the strict transform of $S$ are isolated.
\end{proof}

We will need one result that also holds for Fano threefolds whose anticanonical divisor is very ample.

\begin{lemm}
\label{lemm: normality for H high degree}
Let $X$ be a Fano threefold of Picard rank $1$ and index $1$ with $-K_X$ very ample.  The sublocus of $\mathbb{P}^{g+1}$ parametrizing non-normal hyperplane sections has codimension $\geq 3$.
\end{lemm}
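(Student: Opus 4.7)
The plan is to analyze the incidence variety
\[
I = \bigl\{ (p, H) \in X \times (\mathbb{P}^{g+1})^{*} : T_{p}X \subseteq H \bigr\},
\]
and deduce the statement from a dimension count together with the irreducibility of $I$. First, I would observe that projecting $I \to X$ realizes $I$ as a $\mathbb{P}^{g-3}$-bundle---the fiber over $p$ consists of the hyperplanes in $\mathbb{P}^{g+1}$ containing the projective tangent $3$-plane $T_pX$---so that $I$ is smooth and irreducible of dimension $g$. The other projection $\pi : I \to (\mathbb{P}^{g+1})^{*}$ satisfies $\pi^{-1}(H) = \mathrm{Sing}(X \cap H)$ for every tangent hyperplane $H$, and since $H$ generates $\mathrm{Pic}(X) = \mathbb{Z} H$, every hyperplane section of $X$ is automatically integral. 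Consequently, $X \cap H$ is non-normal precisely when its singular locus contains a curve, i.e., when $\dim \pi^{-1}(H) \geq 1$.

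Let $Z \subseteq (\mathbb{P}^{g+1})^{*}$ denote the non-normal locus. Next I would apply upper semicontinuity of fiber dimension to conclude $\dim \pi^{-1}(Z) \geq \dim Z + 1$. The entire proof then reduces to showing that $\pi^{-1}(Z)$ is a proper closed subset of $I$: since $I$ is irreducible of dimension $g$, such a proper subset has dimension at most $g-1$, which forces $\dim Z \leq g-2$, i.e., codimension at least $3$ in $(\mathbb{P}^{g+1})^{*}$.

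The hard part will be to rule out the possibility $\pi^{-1}(Z) = I$. This equality would mean that every tangent hyperplane to $X$ is tangent along a positive-dimensional locus---equivalently, that $X$ has positive dual defect. My approach would be to combine Landman's parity theorem (the dual defect of a smooth $n$-fold has the same parity as $n$, so for $n = 3$ it must be $0$, $1$, or $3$), the fact that defect $3$ characterizes linearly embedded $\mathbb{P}^{3}$'s (which are not Fano threefolds of index $1$), and Ein's classification of smooth varieties with defect $\geq 1$ (scroll-type or very particular linear-section varieties). None of the cases appearing in that classification coincide with the Fano threefolds on Iskovskih's list appearing in Theorem \ref{theo: classification}. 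A more elementary alternative, which may in fact be the cleanest route in practice, is to inspect the explicit realizations in Theorem \ref{theo: classification} and verify case-by-case that for each such $X$ a general tangent hyperplane is tangent at only finitely many points, using the same type of Jacobian-rank arguments as in the proof of Lemma \ref{lemm: normality for H low degree}.
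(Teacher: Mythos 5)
Your proof is correct and follows essentially the same strategy as the paper: both use the incidence variety of tangent hyperplanes as a $\mathbb{P}^{g-3}$-bundle over $X$ of dimension $g$, observe that non-normality of a hyperplane section amounts to a positive-dimensional fiber of the projection $\pi$, and conclude by a dimension count once $\pi$ is known to be generically finite onto its image, i.e., $X$ has dual defect zero. The only stylistic difference lies in justifying zero dual defect: you invoke Landman parity and Ein's classification (or a case-by-case Jacobian check), whereas the paper remarks in one line that these Fano threefolds are not ruled by linear subspaces --- the same circle of ideas, since positive dual defect would force $X$ to be covered by positive-dimensional linear contact loci, which is impossible here as lines sweep out only a surface in $X$.
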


\begin{proof}
For each $p \in X \subset \mathbb P^{g+1}$, hyperplane sections $H \in (\mathbb P^{g+1})^*$ which are singular at $p$ form a $\mathbb P^{g-3}$ in $(\mathbb P^{g+1})^*$. This gives us a $\mathbb P^{g-3}$-bundle $\mathcal H$ over $X$ which is a subvariety of $X \times  \mathbb P^{g+1}$.  The projection $\pi: \mathcal H \rightarrow  \mathbb P^{g+1}$ is generically finite since none of our Fano threefolds are ruled in projective subspaces.
The exceptional locus $E$ of $\pi$ is at most $(g-1)$-dimensional, thus the image $\pi(E)$ is at most $(g-2)$-dimensional.
\end{proof}

\section{Geometric invariants in Manin's conjecture}
\label{sec: geometricinvariants}

In \cite{LT17}, we use the geometric invariants in Manin's conjecture to systematically study the parameter space of rational curves $\mathrm{Mor}(\mathbb P^1, X)$. We recall these invariants in this section.
These invariants also appeared in the study of cylinders in \cite{CPW16}.
We denote the cone of pseudoeffective divisors of $X$ by $\overline{\mathrm{Eff}}^1(X)$.

\begin{defi}{\cite[Definition 2.2]{HTT15}}
Let $X$ be a smooth projective variety and let $L$ be a big and nef $\mathbb Q$-Cartier divisor on $X$.
The Fujita invariant is
$$
a(X, L) := \min \{ t\in \bR \mid K_{X} + tL \in \Eff^{1}(X) \}.
$$
If $L$ is nef but not big, we set $a(X, L) =+ \infty$.  If $X$ is singular, then we define this invariant as $a(\widetilde{X},\phi^{*}L)$ where $\phi: \widetilde{X} \to X$ is a resolution of singularities.
(The value is independent of the choice of resolution by \cite[Proposition 2.7]{HTT15}.)  Note that $a(X, L) >0$ if and only if $X$ is uniruled by \cite{BDPP}.
\end{defi}

While studying the $a$-invariant one can often restrict attention to the following special class of varieties.

\begin{defi}
Let $(X,L)$ be a pair consisting of a smooth projective variety $X$ and a big and nef $\mathbb{Q}$-divisor $L$ such that $a(X,L) > 0$.  We say that $(X,L)$ is adjoint rigid, or equivalently that the adjoint divisor $K_{X} + a(X,L)L$ is rigid, if $K_{X} + a(X,L)L$ has Iitaka dimension $0$.

If $(X,L)$ is a pair consisting of a singular projective variety $X$ and a big and nef $\mathbb{Q}$-divisor $L$ such that $a(X,L)>0$, we say that $(X,L)$ is adjoint rigid if for some resolution $\phi: X' \to X$ the pair $(X',\phi^{*}L)$ is adjoint rigid.
\end{defi}

In \cite{LT17}, we showed that the $a$-invariant controls the dimension of moduli spaces of rational curves:

\begin{theo}[\cite{LT17} Theorem 1.1] \label{intro:expecteddim}
Let $X$ be a smooth projective weak Fano variety and set $L = -K_{X}$.  Let $V \subsetneq X$ be the proper closed subset which is the union of all subvarieties $Y$ such that $a(Y,L|_{Y}) > a(X,L)$.  Then any component of $\mathrm{Mor}(\mathbb{P}^{1},X)$ parametrizing a curve not contained in $V$ will have the expected dimension and will parametrize a dominant family of rational curves.
\end{theo}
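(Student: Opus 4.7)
The plan is to combine a standard deformation-theoretic lower bound on $\dim M$ with an upper bound extracted from the positivity of the adjoint divisor. Note first that $a(X,L)=1$ for weak Fano $X$: the divisor $K_{X}+tL=(1-t)K_{X}$ is pseudoeffective exactly when $t\geq 1$, since $-K_{X}$ big forces $K_{X}$ itself not to be pseudoeffective. Let $M$ be a component of $\mathrm{Mor}(\mathbb{P}^{1},X)$ whose general curve $C$ is not contained in $V$, let $d=-K_{X}\cdot C$, and let $W\subseteq X$ be the closure of the union of the images of curves parametrized by $M$. Since $W\not\subseteq V$, the divisor $L|_{W}$ must be big (otherwise $a(W,L|_{W})=+\infty$ and $W\subseteq V$), and moreover $a(W,L|_{W})\leq a(X,L)=1$.

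Next I would pass to a resolution $\phi:\widetilde{W}\to W$ and deduce the desired upper bound. A general curve in $M$ lifts to a rational curve $\widetilde{C}$ on $\widetilde{W}$, producing a component $\widetilde{M}\subseteq\mathrm{Mor}(\mathbb{P}^{1},\widetilde{W})$ with $\dim\widetilde{M}=\dim M$ whose members cover $\widetilde{W}$ by the definition of $W$. In characteristic zero, a general member of a dominating family of rational curves on a smooth variety is free, so
\[
\dim M \;=\; -K_{\widetilde{W}}\cdot\widetilde{C}+\dim W.
\]
The class $[\widetilde{C}]$ is movable since it belongs to a covering family, and hence by BDPP duality pairs nonnegatively with every pseudoeffective divisor, in particular with $K_{\widetilde{W}}+a(\widetilde{W},\phi^{*}L)\phi^{*}L$ (which is pseudoeffective by the definition of the Fujita invariant). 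Using $\phi^{*}L\cdot\widetilde{C}=L\cdot C=d$ this rearranges to
\[
\dim M \;\leq\; a(W,L|_{W})\cdot d+\dim W.
\]

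Finally I would compare the two bounds. If $W\subsetneq X$, then $a(W,L|_{W})\leq 1$ and $\dim W<\dim X$ give $\dim M\leq d+\dim W<d+\dim X$, contradicting the deformation-theoretic lower bound $\dim M\geq d+\dim X$. Thus $W=X$, the family is dominant, and the same inequality taken on $\widetilde{W}=X$ with $a(X,L)=1$ yields $\dim M\leq d+\dim X$; combined with the lower bound this gives equality.

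The main technical obstacle is the lifting step: one must verify that a general member of $M$ avoids the image of the exceptional locus of $\phi$ so that it lifts to a morphism to $\widetilde{W}$, and that this identifies $M$ with a component $\widetilde{M}$ of $\mathrm{Mor}(\mathbb{P}^{1},\widetilde{W})$ of the same dimension. This is essentially bookkeeping, using that the images of curves in $M$ dominate $W$ while the exceptional locus has strictly smaller dimension, but it must be done carefully enough to match components on the two sides and to guarantee that the covering family on $\widetilde{W}$ really does consist of free curves.
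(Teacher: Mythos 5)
The paper quotes this result verbatim from \cite{LT17} (adding only the remark that properness of $V$ is due to \cite{HJ16}) and does not reprove it, so there is no internal proof to compare against. Your reconstruction is correct and is exactly the standard argument from that reference: observe $a(X,-K_X)=1$, let $W$ be the swept-out locus, note $W\not\subseteq V$ forces $a(W,-K_X|_W)\leq 1$ (and in particular $-K_X|_W$ big), lift a general curve to a resolution $\phi:\widetilde W\to W$ where it is free, and pair the movable class $f_*[\mathbb P^1]$ against the pseudoeffective divisor $K_{\widetilde W}+a(\widetilde W,\phi^*L)\phi^*L$ to get $\dim M\leq a(W,L|_W)\,d+\dim W$, which is incompatible with the Riemann--Roch lower bound $\dim M\geq d+\dim X$ unless $W=X$ and equality holds. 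Two small points worth recording: you only need the tangent-space inequality $\dim M\leq h^0(\mathbb P^1,f^*T_{\widetilde W})=-\deg f^*K_{\widetilde W}+\dim W$ rather than identifying the lift with a full component of $\mathrm{Mor}(\mathbb P^1,\widetilde W)$, which sidesteps most of the bookkeeping you flag at the end; and $d$ should be read as the anticanonical degree $\deg f^*(-K_X)$ of the morphism (not of its reduced image), so that the computation and the conclusion remain correct when $f$ is a multiple cover.
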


The fact that the set $V$ in the statement is a proper closed subset of $X$ is a result of \cite{HJ16}. 

\begin{defi}{\cite[Definition 2.8]{HTT15}}
Let $X$ be a smooth uniruled projective variety and let $L$ be a big and nef $\mathbb Q$-Cartier divisor on $X$.
We define $b(X, L)$ to be the codimension of the minimal face of $\Eff^{1}(X)$ containing the class of $K_{X} + a(X, L)L$. 
When $X$ is singular, we define this invariant as $b(\widetilde{X},\phi^{*}L)$ where $\phi: \widetilde{X} \to X$ is a resolution of singularities.
(The value is again independent of the choice of resolution by \cite[Proposition 2.10]{HTT15}.)
\end{defi}

This invariant measures the Picard rank of a variety associated to $(X,L)$ via the minimal model program (see \cite[Corollary 3.9 and Lemma 3.10]{LTT14}). Its behavior in families has been studied in \cite{LT16} and \cite{Sen17}.  This invariant will not play a large role in this paper; we will only use it to explain the formulation of Manin's Conjecture in Section \ref{sec: GM}.

Finally, we record for later use a basic lemma about the $a$-value of surfaces.

\begin{lemm}
\label{lemm: spectrumforsurfaces}
Let $S$ be a smooth projective surface and let $L$ be a big and nef Cartier divisor on $S$.
Then $a(S, L)$ takes the form of $2/n$ or $3/n$ where $n$ is a positive integer.
\end{lemm}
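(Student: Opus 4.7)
Let $a=a(S,L)$. If $a=0$ then $S$ is not uniruled and the statement is vacuous (no positive integer $n$ gives $2/n=0$), so I may assume $a>0$.

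The plan is to run the $K_S$-minimal model program on $S$ with scaling by $L$ and read off $a$ from the terminal Mori fiber space. Each step is a contraction $\phi_i\colon S_i\to S_{i+1}$ of a $(-1)$-curve $E_i$ that is trivial against $K_{S_i}+\lambda_i L_i$ at the current nef threshold $\lambda_i$; since $K_{S_i}\cdot E_i=-1$, this forces $\lambda_i=1/(L_i\cdot E_i)$. Setting $L_{i+1}:=(\phi_i)_*L_i$, the relation $\phi_i^*L_{i+1}=L_i+(L_i\cdot E_i)E_i$ together with a direct intersection-theoretic check shows that $L_{i+1}$ is again big, nef and Cartier.

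The preservation $a(S_i,L_i)=a(S_{i+1},L_{i+1})$ at each step follows from the identity
\[
K_{S_i}+tL_i=\phi_i^*(K_{S_{i+1}}+tL_{i+1})+(1-t(L_i\cdot E_i))E_i
\]
combined with the elementary inequality $a(S_i,L_i)\le \lambda_i$ (nef implies pseudo-effective, so the pseudo-effective threshold is at most the nef threshold). In the range $t\le \lambda_i=1/(L_i\cdot E_i)$ the coefficient of $E_i$ is non-negative, so pseudo-effectivity of $K_{S_i}+tL_i$ is equivalent to pseudo-effectivity of $K_{S_{i+1}}+tL_{i+1}$; together with pushforward of pseudo-effective classes in the opposite direction this yields equality of $a$-invariants.

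By termination of the surface MMP, after finitely many steps we arrive at a Mori fiber space $\sigma\colon S_m\to T$, which is either $\mathbb P^2$ (if $T$ is a point) or a $\mathbb P^1$-bundle over a smooth curve (if $\dim T=1$). Let $C_m$ generate the extremal ray of $\sigma$: either a line of $\mathbb P^2$ with $-K_{S_m}\cdot C_m=3$, or a fiber of $S_m\to T$ with $-K_{S_m}\cdot C_m=2$. Since $(K_{S_m}+\lambda_m L_m)\cdot C_m=0$, the class $K_{S_m}+\lambda_m L_m$ is proportional to $C_m$ in $N^1(S_m)$ (the face of the nef cone cut out by $C_m$ is one-dimensional), so it lies on the extremal ray of $\Eff^1(S_m)$ generated by $C_m$; hence $\lambda_m=a(S_m,L_m)=a(S,L)$. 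Therefore
\[
a=\frac{-K_{S_m}\cdot C_m}{L_m\cdot C_m}
\]
has numerator $2$ or $3$ and positive integer denominator, so $a\in\{2/n\}\cup\{3/n\}$ as claimed. The main technical obstacle is justifying the preservation of the $a$-invariant under each $(-1)$-contraction; the argument above hinges on the crucial coincidence $\lambda_i=1/(L_i\cdot E_i)$ at the MMP threshold, which makes the $E_i$-coefficient in the displayed identity vanish exactly there and allows pseudo-effectivity to transfer cleanly.
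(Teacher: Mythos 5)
Your proof is correct and ultimately arrives at the same endpoint as the paper's, namely a Mori fiber space $S_m$ that is either $\mathbb{P}^2$ or a $\mathbb{P}^1$-bundle where the pushed-forward $L$ is Cartier and the extremal curve has $-K$-degree $3$ or $2$. But the packaging is genuinely different. The paper's proof invokes \cite[Proposition 4.6]{LTT14} and splits according to the Iitaka dimension of the adjoint divisor $K_S + a(S,L)L$: when it is $1$ one restricts to a general fiber $F\cong\mathbb P^1$ of the Iitaka fibration to get $a = 2/(L\cdot F)$, and when it is $0$ one runs the MMP to a Mori fiber space on which $K_{S'} + a\,\beta_*L\equiv 0$. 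You instead run the $K_S$-MMP with scaling by $L$ from the outset and track the nef threshold $\lambda_i$ through each $(-1)$-contraction, proving from scratch that the pseudo-effective threshold is preserved at every step via the displayed discrepancy identity and the observation that the $E_i$-coefficient vanishes exactly at $t=\lambda_i$; this lets you sidestep the Iitaka dichotomy entirely, since at the last step $K_{S_m}+\lambda_m L_m$ automatically lands on an extremal ray of $\Eff^1(S_m)$ (either $0$ on $\mathbb P^2$, or a multiple of the fiber class on a ruled surface), forcing $a(S_m,L_m)=\lambda_m$. Your route is more self-contained---it effectively reproves the relevant surface case of \cite[Proposition~4.6]{LTT14} rather than citing it---at the cost of slightly more bookkeeping (including the unstated but routine verification that $L_{i+1}$ remains big, nef and Cartier). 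One minor imprecision: in the $\mathbb{P}^2$ case the class $K_{S_m}+\lambda_m L_m$ is zero rather than a nonzero multiple of $C_m$, and the face of the nef cone orthogonal to $C_m$ is $\{0\}$ rather than one-dimensional, but the conclusion that the class lies on the boundary of $\Eff^1(S_m)$ (hence equals the pseudo-effective threshold) is still exactly what you need.
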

\begin{proof}
We apply the arguments of \cite[Proposition 4.6]{LTT14}.
If the Iitaka dimension of the adjoint divisor $a(S, L)L + K_S$ is positive,
then let $F$ be a fiber of the Iitaka fibration associated to this adjoint divisor.  In this situation $a(S,L) = a(F,L)$.
Since a general fiber is isomorphic to $\mathbb P^1$ we have $a(S, L)L \cdot F = 2$.
Thus our assertion follows in this case.

Suppose that $a(S, L)L + K_S$ is rigid.
By applying the MMP, one can obtain a birational morphism $\beta: S \rightarrow S'$ such that $g: S' \rightarrow Z$ is a Mori fiber space from a smooth surface and $a(S, L)\beta_*L + K_{S'} \equiv 0$.
If the Mori fiber space $g: S' \rightarrow Z$ is a $\mathbb P^1$-bundle, then we have
$a(S, L)\beta_*L \cdot F = 2$ where $F$ is a fiber of $g$.
If $S'$ is $\mathbb P^2$, then we have $a(S, L)\beta_*L \cdot H = 3$ where $H$ is a hyperplane section on $\mathbb P^2$. Thus our assertion follows.
\end{proof}

\section{Manin's invariants for subvarieties}
\label{sec: Manininvariants}
Let $X$ be a smooth Fano threefold of Picard rank $1$ and index $1$ satisfying $H^3 \geq 4$, and when $H^3 = 4$ assume also that $-K_X$ is very ample.  Our goal is to classify the subvarieties of $X$ with high $a$-invariant with respect to $-K_{X}$.  Since curves are straightforward, the main issue is to understand surfaces with large $a$-invariant.

Suppose that $S \subset X$ is a surface.  \cite[Section 6.4]{LTT14} shows that $a(S,H) > 1$ if and only if $S$ is swept out by lines.  We will let $Z$ denote the union of all such surfaces.  We also must classify surfaces with $a(S,H)=1$.  When $H^3 \geq 10$, \cite{LTT14} shows that any surface $S$ with $a(S, H)=1$ is ruled by conics in $X$ and if $\phi: S' \to S$ is a resolution then the adjoint divisor $K_{S'} + \phi^{*}H|_{S}$ has Iitaka dimension $1$.  We prove this statement for the other Fano threefolds on our list:

\begin{theo}
\label{theo: weaklybalanced for Fano 3-folds}
Let $X$ be a smooth Fano threefold of Picard rank $1$, index $1$, and degree $H^{3} \geq 4$.    Furthermore if $H^{3} = 4$ assume that $-K_{X}$ is very ample.  For any surface $S\not\subset Z$ with $a(S, H) = 1$ we have that $(S,H)$ is not adjoint rigid and that $S$ is covered by conics.  
\end{theo}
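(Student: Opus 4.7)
The case $H^{3} \geq 10$ is already settled in \cite{LTT14}, so I focus on $H^{3} \in \{4,6,8\}$ and argue by contradiction, supposing $(S,H)$ is adjoint rigid with $a(S,H) = 1$. The first step is to apply the MMP to the adjoint divisor on a resolution $\phi \colon S' \to S$, as in the proof of Lemma \ref{lemm: spectrumforsurfaces}, producing a birational morphism $\beta \colon S' \to S''$ to a smooth Mori-fiber-space endpoint with $K_{S''} + \beta_{*}\phi^{*}H|_{S} \equiv 0$. Writing $S \equiv dH$, a self-intersection comparison yields
\[
dH^{3} = (\phi^{*}H|_{S})^{2} \leq (\beta_{*}\phi^{*}H|_{S})^{2} = K_{S''}^{2} \leq 9,
\]
which forces $d = 1$ when $H^{3} \in \{6,8\}$ and $d \in \{1,2\}$ when $H^{3} = 4$.

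The second step is to rule out the case in which $S$ is normal. Since $S$ is a Cartier divisor in the smooth threefold $X$, it is Gorenstein; if in addition $S$ is normal, adjunction gives $K_{S} = (d-1)H|_{S}$. A projection-formula computation, together with the observation that for isolated surface singularities the sheaf $\phi_{*}\omega_{S'}^{[m]}$ agrees with $\omega_{S}^{[m]} = \mathcal{O}_{S}(m(d-1)H|_{S})$ off a finite set, shows that $h^{0}(m(K_{S'} + \phi^{*}H|_{S}))$ grows quadratically in $m$. Thus $K_{S'} + \phi^{*}H|_{S}$ is big, contradicting $a(S,H) = 1$; hence $S$ must be non-normal.

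The third step is to eliminate the non-normal possibilities using the hyperplane-section results of Section \ref{sec: hyperplane}. For $d = 1$, Lemma \ref{lemm: normality for H low degree} shows that every hyperplane section of $X$ is integral and normal, which is an immediate contradiction. For $d = 2$ in the case $H^{3} = 4$ (smooth quartic threefold), Lemma \ref{lemm: nonnormality for 2H} classifies the integral non-normal surfaces $S \in |2H|$ as restrictions of rank-$\leq 3$ quadrics $Q \subset \mathbb{P}^{4}$ singular along a line $l \subset X$. The pencil of $\mathbb{P}^{2}$'s containing $l$ then restricts to $S$ to give a fibration $S \to \mathbb{P}^{1}$ whose general fiber is a plane cubic curve of $H$-degree $3$, and Lemma \ref{lemm: spectrumforsurfaces} forces $a(S,H) \in \{0, 2/3\}$, again contradicting $a(S,H) = 1$.

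Together these steps show $(S,H)$ cannot be adjoint rigid. To conclude that $S$ is covered by conics, I observe that since $a(S,H) = 1$ and the adjoint $K_{S'} + \phi^{*}H|_{S}$ has positive Iitaka dimension, its Iitaka fibration onto a curve has general fiber $F$ a smooth rational curve with $K_{S'} \cdot F = -2$ and $\phi^{*}H|_{S} \cdot F = 2$, so that the image of $F$ in $X$ is a conic. I expect the main obstacle to be the non-normal case $d = 2$, $H^{3} = 4$, which rests essentially on the explicit structure theorem provided by Lemma \ref{lemm: nonnormality for 2H}.
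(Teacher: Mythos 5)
Your first step (the self-intersection bound $dH^{3} \leq (\beta_{*}\phi^{*}H|_{S})^{2} \leq 9$ forcing $d = 1$ for $H^{3} \in \{6,8\}$ and $d \in \{1,2\}$ for $H^{3} = 4$) is a correct and slightly more uniform reduction than the paper's, which instead cites the proof of LTT14 Proposition 6.12 for $S \in |nH|$, $n \geq 2$. But your second step has a genuine gap. You assert that for a normal Gorenstein $S$ the adjoint $K_{S'} + \phi^{*}H|_{S}$ is big because $\phi_{*}\omega_{S'}^{[m]}$ agrees with $\omega_{S}^{[m]} = \mathcal{O}_S(m(d-1)H|_S)$ off a finite set. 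That agreement is an inclusion $\phi_{*}\omega_{S'}^{\otimes m} \hookrightarrow \omega_{S}^{\otimes m}$ with cokernel supported at the singular points, but when $S$ has non-canonical singularities the colength of that cokernel itself grows quadratically in $m$, so you cannot conclude that $h^{0}(m(K_{S'} + \phi^{*}H|_{S}))$ grows quadratically — indeed, if the adjoint were big, $a(S,H)$ would be strictly less than $1$, contrary to the hypothesis. The correct conclusion is precisely what the paper extracts from Lemma \ref{lemm:surfaceoptions}: in the normal case, either $S$ is canonical (and then adjunction gives $a(S,H) = 1-d \leq 0$, a contradiction), or the adjoint on a resolution has Iitaka dimension $1$, which is exactly the desired conclusion. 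Your proposal never establishes that $S$ is canonical, and without that, the bigness claim fails. Note also that non-canonical hyperplane sections genuinely exist (Lemma \ref{lemm: noncanonicallocus_lowdegree} only bounds them in codimension $\geq 4$, it does not exclude them), and the theorem must cover those.

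Your third step also needs care: the residual curves of the projection from $l$ are plane cubics, which for generic choices are genus-$1$ curves, not rational curves, so the fibration does not directly plug into Lemma \ref{lemm: spectrumforsurfaces} (whose conclusion about $a$-values is tied to the Iitaka fibration by rational curves). The paper instead passes to the normal strict transform of $S$ on $\mathrm{Bl}_{l}X$ (using Lemma \ref{lemm: nonnormality for 2H}), computes $K_{\widetilde S} \equiv (\pi^{*}H - E)|_{\widetilde S}$ (an effective class, since hyperplanes through $l$ exist), rules out the canonical subcase, and again invokes Lemma \ref{lemm:surfaceoptions}. In short, Lemma \ref{lemm:surfaceoptions} is the indispensable tool here, and your proposal tries to route around it with an argument that does not hold for non-canonical normal surfaces.
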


\begin{rema}
One consequence of this theorem is that the $b$-invariant is uninteresting in our situation.  More precisely, if $f: Y \to X$ is any morphism that is generically finite onto its image such that $f$ does not factor rationally through $Z$ and $a(Y,f^{*}H) \geq a(X,H)$ then $b(Y,f^{*}H) = 1$.  For this reason we will not need to consider the $b$-invariant while discussing subvarieties or covers.
\end{rema}

The main tool is the following useful lemma.

\begin{lemm} \label{lemm:surfaceoptions}
Suppose that $Y$ is a normal surface and $L$ is an ample Cartier divisor on $Y$.  Then one of the following holds:
\begin{itemize}
\item $a(Y,L)<1$, or
\item $Y$ has canonical singularities and $a(Y,L)L \equiv -K_{Y}$, or
\item for a resolution $\phi: \widetilde{Y} \to Y$ the adjoint pair $K_{\widetilde{Y}} + a(Y,L)\phi^{*}L$ has Iitaka dimension $1$.
\end{itemize}
\end{lemm}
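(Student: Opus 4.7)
Let $\phi:\widetilde Y\to Y$ be a resolution and set $a := a(Y,L)$ and $D := K_{\widetilde Y}+a\phi^{*}L$. By definition $D$ is pseudoeffective, and since $L$ is ample the divisor $D+\epsilon\phi^{*}L$ is big for every $\epsilon>0$, so $D$ lies on the boundary of the big cone; on the surface $\widetilde Y$ this forces $\nu(D)\leq 1$, and by abundance for surfaces $\kappa(D)=\nu(D)$. Case (iii) covers $\kappa(D)=1$, so from now on assume $\kappa(D)\leq 0$; it remains to show that under the additional assumption $a\geq 1$ we land in case (ii).

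Under these hypotheses Zariski-decompose $D=P+N$ on $\widetilde Y$: the bound $\nu(D)\leq 0$ gives $P\equiv 0$, so $D\equiv N$ where $N\geq 0$ has negative-definite support. The heart of the argument, and the main obstacle, is the claim that $N$ is $\phi$-exceptional. Suppose instead that some component $C$ of $N$ is not $\phi$-exceptional; then $\phi_{*}C\neq 0$ is an effective Weil divisor on $Y$, and ampleness of $L$ gives $\phi^{*}L\cdot C>0$, hence $\phi^{*}L\cdot N>0$. Combining this with the projection-formula identity $\phi^{*}L\cdot D=(K_Y+aL)\cdot L$, the negative-definite intersection form on $\Supp N$ afforded by Zariski decomposition, adjunction on each component of $N$, and the hypothesis $a\geq 1$, one derives via the BDPP duality \cite{BDPP} between the pseudoeffective and movable cones a contradiction with $D$ sitting on the pseudoeffective boundary.

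Granting exceptionality of $N$, pushing forward by $\phi$ yields that $K_Y+aL$ is numerically trivial as a Weil divisor class on $Y$, so $K_Y$ is $\bQ$-Cartier with $-K_Y\equiv aL$ ample. Taking $\phi$ to be the minimal resolution and writing $K_{\widetilde Y}=\phi^{*}K_Y+\sum d_{i}E_{i}$ with $d_{i}\leq 0$, substitution produces
\begin{equation*}
\sum d_{i}E_{i}\;\equiv\;K_{\widetilde Y}+a\phi^{*}L\;\equiv\;N\;\geq\;0.
\end{equation*}
Since an effective divisor numerically equivalent to an antieffective one on a smooth projective surface must vanish (test against any ample class), $N=0$ and all $d_{i}=0$. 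Hence $Y$ has canonical singularities with $aL\equiv -K_Y$, as required.
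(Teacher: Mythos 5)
Your approach via Zariski decomposition is genuinely different from the paper's, which instead runs the MMP directly on the adjoint divisor $K_{\widetilde Y}+a\phi^{*}L$ and shows that each contracted $(-1)$-curve $C$ satisfies $\phi^{*}L\cdot C=0$ (because $K_{\widetilde Y}\cdot C=-1$, $a\geq 1$, and $\phi^{*}L$ is Cartier and nef, forcing the integer $\phi^{*}L\cdot C$ to be $<1$), hence is $\phi$-exceptional. That argument is essentially trivial arithmetic because the contracted curves are $(-1)$-curves. Your version replaces the MMP by the Zariski decomposition $D\equiv N$ and must show $N$ is $\phi$-exceptional, which is a genuinely harder claim about an arbitrary negative-definite configuration. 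You flag this yourself as ``the main obstacle,'' but the mechanism you propose — ``BDPP duality between the pseudoeffective and movable cones'' — is not the right tool and does not produce the needed contradiction; on a surface it is just the duality of nef and pseudoeffective and gives nothing beyond what you already have. There is also a real inconsistency: you form the Zariski decomposition on an arbitrary resolution but then invoke the minimal resolution in the final paragraph; the two $N$'s need not coincide.

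The gap is fillable, and in fact the ingredients you name (negative definiteness, adjunction, $a\geq 1$, Cartier-ness) are exactly the right ones, but they need to be assembled. Fix $\phi$ to be the \emph{minimal} resolution from the start. For any $\phi$-exceptional $E$ one has $K_{\widetilde Y}\cdot E\geq 0$ (standard for minimal resolutions) and $\phi^{*}L\cdot E=0$, so $D\cdot E\geq 0$. On the other hand $N^{2}=D\cdot N<0$, so some component $C\subset\Supp N$ has $D\cdot C<0$; by the previous sentence $C$ is not $\phi$-exceptional, so $\phi^{*}L\cdot C\geq 1$ (ampleness of $L$ plus Cartier). Then $K_{\widetilde Y}\cdot C=D\cdot C-a\phi^{*}L\cdot C<-1$, hence $K_{\widetilde Y}\cdot C\leq -2$, while $C^{2}\leq -1$ because the diagonal entries of a negative-definite integer matrix are $\leq -1$. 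Adjunction gives $2p_{a}(C)-2=(K_{\widetilde Y}+C)\cdot C\leq -3$, impossible. So $N=0$, $D\equiv 0$, and the last paragraph of your argument goes through on the minimal resolution. One further caution: ``abundance for surfaces'' as stated is false for arbitrary pseudoeffective divisors (a nontrivial degree-$0$ line bundle has $\nu=0$ but $\kappa=-\infty$); you should either justify it for adjoint divisors of this type or, as the paper does, avoid it by running the MMP, where the dichotomy $\kappa\in\{0,1\}$ falls out of the structure of the minimal model.
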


\begin{proof}
Suppose that $a(Y,L) \geq 1$.  Let $\phi: \widetilde{Y} \to Y$ be a resolution.  Suppose we run the MMP for $K_{\widetilde{Y}} + a(Y,L)\phi^{*}L$.  The first step contracts a $(-1)$-curve $C$ which is negative against this class, and since $L$ is Cartier we must have $\phi^{*}L \cdot C = 0$.  Thus the result of the contraction is a morphism $\psi: \widetilde{Y} \to \widehat{Y}$ such that $\phi$ factors through $\psi$.  Since $\widehat{Y}$ is smooth, we may replace $\widetilde{Y}$ by $\widehat{Y}$.

Repeating this process, we will eventually end at a minimal model.  There are two possible outcomes:
\begin{itemize}
\item $\kappa(K_{\widetilde{Y}} + a(Y,L)\phi^{*}L) = 1$.  This is the last case.
\item $\kappa(K_{\widetilde{Y}} + a(Y,L)\phi^{*}L) = 0$.  In this case the result of the MMP is a smooth weak del Pezzo surface, and arguing as above we may replace $\widetilde{Y}$ by this surface.  After this change we have $-K_{\widetilde{Y}} \equiv a(Y,L)\phi^{*}L$.  Since $Y$ is normal, the map $\phi: \widetilde{Y} \to Y$ is a contraction of an extremal face of the Mori cone, so that $Y$ must have canonical singularities.  Thus we may push forward the numerical equality of divisors on $\widetilde{Y}$ to obtain $-K_{Y} \equiv a(Y,L)L$.
\end{itemize}
\end{proof}

By \cite[Theorem 3.7]{LT16}, any surface with $a(S,-K_{X}) \geq 1$ is swept out by lines, conics, or cubics.  In fact the case of cubics is not interesting for us:

\begin{lemm}
Suppose that $Y$ is a normal surface and $L$ is an ample Cartier divisor on $Y$ such that the smallest $L$-degree of a rational curve through a general point of $Y$ is $3$.  If $a(Y,L) \geq 1$ then $Y \cong \mathbb{P}^{2}$ and $L = \mathcal{O}(1)$.
\end{lemm}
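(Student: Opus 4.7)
The plan is to combine Lemmas \ref{lemm: spectrumforsurfaces} and \ref{lemm:surfaceoptions} to pin down $(Y,L)$, then conclude via the classification of Gorenstein del Pezzo surfaces. Let $\phi \colon \widetilde{Y} \to Y$ be a resolution; since $L$ is ample Cartier on $Y$, $\phi^{*}L$ is a big, nef, Cartier divisor on the smooth surface $\widetilde{Y}$. Lemma \ref{lemm: spectrumforsurfaces} restricts $a(Y,L) = a(\widetilde{Y}, \phi^{*}L)$ to the form $2/n$ or $3/n$, and combined with $a(Y,L) \geq 1$ we are left with $a(Y,L) \in \{1,\, 3/2,\, 2,\, 3\}$.

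I would next eliminate the Iitaka-dimension-$1$ alternative of Lemma \ref{lemm:surfaceoptions}. A general fiber $F$ of the associated Iitaka fibration is a smooth rational curve on $\widetilde{Y}$ with $-K_{\widetilde{Y}} \cdot F = 2$, hence $\phi^{*}L \cdot F = 2/a(Y,L) \leq 2$. Pushing $F$ forward to $Y$ yields a moving family of rational curves of $L$-degree at most $2$ through a general point, contradicting the hypothesis that this minimum is $3$. So the remaining alternative holds: $Y$ has canonical singularities and $-K_{Y} \equiv a(Y,L) L$, exhibiting $Y$ as a Gorenstein del Pezzo surface of Fano index $a(Y,L)$.

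The concluding step is a case analysis on $a(Y,L)$. Passing to the minimal resolution so that $-K_{\widetilde{Y}} = a(Y,L)\phi^{*}L$, our hypothesis translates to the assertion that the minimum $-K_{\widetilde{Y}}$-degree of a rational curve through a general point of $\widetilde Y$ equals $3\, a(Y,L)$. The value $a(Y,L) = 3/2$ is impossible because $9/2$ cannot be an intersection number with a Cartier divisor. For $a(Y,L) \in \{2, 3\}$, the classification of smooth weak del Pezzos of Fano index $\geq 2$ (namely quadrics and $\mathbb{P}^{2}$) shows that the minimum $-K$-degree of a rational curve through a general point is at most $3$, not $6$ or $9$, giving a contradiction. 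Hence $a(Y,L) = 1$ and the minimum $-K_{\widetilde{Y}}$-degree through a general point equals $3$. Among smooth weak del Pezzos only $\mathbb{P}^{2}$ has no rational curve of $-K$-degree $\leq 2$ through a general point, so $\widetilde{Y} \cong \mathbb{P}^{2}$, whence $Y \cong \mathbb{P}^{2}$ and $L$ is identified with $\mathcal{O}(1)$ as in the statement.

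The main obstacle is the final classification step: one must verify that every smooth weak del Pezzo other than $\mathbb{P}^{2}$ carries a covering family of rational curves of $-K$-degree at most $2$. This is classical: $\mathbb{P}^{1} \times \mathbb{P}^{1}$ and the Hirzebruch surfaces $\mathbb{F}_{n}$ admit rulings of $-K$-degree $2$, and all other weak del Pezzos of degree $\leq 7$ are obtained from these by blow-ups and inherit conic-bundle structures whose general fibers have $-K$-degree $2$, so a rational curve of $-K$-degree at most $2$ passes through a general point in every such case.
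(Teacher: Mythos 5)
Your proposal is correct and follows the same skeleton as the paper's argument: rule out the Iitaka-dimension-$1$ alternative of Lemma~\ref{lemm:surfaceoptions} because its fibers would yield a covering family of $L$-degree $\le 2$ rational curves, reduce to a smooth weak del Pezzo $\widetilde{Y}$ with $-K_{\widetilde{Y}}\equiv a(Y,L)\phi^{*}L$, and then conclude $\widetilde{Y}\cong\mathbb{P}^{2}$. The paper collapses your case analysis on $a(Y,L)\in\{1,3/2,2,3\}$ into a single observation: any weak del Pezzo other than $\mathbb{P}^{2}$ carries a covering family of rational curves of anticanonical degree $2$, and since $a(Y,L)\ge 1$ such a curve has $\phi^{*}L$-degree $2/a(Y,L)\le 2$, contradicting the hypothesis uniformly in $a$. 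Your intermediate cases (the Cartier-integrality argument for $a=3/2$, the index-$\ge 2$ classification for $a\in\{2,3\}$, where you should also list $\mathbb{F}_{2}$ alongside $\mathbb{P}^{1}\times\mathbb{P}^{1}$) are sound but unnecessary detours relative to this shortcut.
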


\begin{proof}
In this case it is impossible for the adjoint pair on a resolution to have Iitaka dimension $1$.  Thus arguing as in the proof of Lemma \ref{lemm:surfaceoptions}, we obtain a smooth weak del Pezzo surface $\widetilde{Y}$ and a birational map $\phi: \widetilde{Y} \to Y$ such that $a(Y,L)\phi^{*}L \equiv -K_{\widetilde{Y}}$.  But any weak del Pezzo besides $\mathbb{P}^{2}$ is covered by curves of anticanonical degree $2$.  So $\widetilde{Y} = \mathbb{P}^{2}$ and the claim follows.
\end{proof}

\begin{proof}[Proof of Theorem~\ref{theo: weaklybalanced for Fano 3-folds}]
When $H^3 \geq 10$, this statement is proved in \cite[Proposition 6.12]{LTT14}.
Suppose that $6 \leq H^3 \leq 8$. If $S \in |nH|$ where $n\geq 2$, then the proof of  \cite[Proposition 6.12]{LTT14} implies our assertion. So we may assume that $S \in |H|$. By Lemma~\ref{lemm: normality for H low degree}, $S$ is normal so if we have $a(S, H)=1$, then by Lemma~\ref{lemm:surfaceoptions}, we can conclude that either $S$ has canonical singularities or the adjoint divisor on a smooth model has Iitaka dimension $1$. However, if $S$ has only canonical singularities, then it follows from \cite[Proposition 2.7]{HTT15} and the adjunction formula that $a(S, H) < 1$. Thus the surface is adjoint rigid and is fibered by conics.

In the case of $H^3=4$, then we need to handle the cases of $S\in |nH|$ where $n=1, 2$. The case of $n=1$ is the same as above, so we may assume that $n=2$.  Let $S = Q\cap X \in |2H|$ be an integral divisor where $Q$ is a quadric in $\mathbb P^4$. If $S$ is normal, then we can argue as before.  By Lemmas \ref{lemm: normality for 2H} and \ref{lemm: nonnormality for 2H}, the only other case to consider is when $Q$ is singular along a line contained in $X$.  Then the strict transform of $S$ on the blow-up of $X$ along $l$ is normal.  If the strict transform of $S$ has canonical singularities then an easy adjunction argument shows its canonical divisor is pseudo-effective.  Just as before we conclude by Lemma \ref{lemm:surfaceoptions} that either $a(S,H) < 1$ or the adjoint divisor has Iitaka dimension $1$.
\end{proof}

\subsection{Surfaces with $a$-value 3/4}

By Lemma \ref{lemm: spectrumforsurfaces}, the next possible highest value for $a$-values is $a(S,L) = 3/4$.  We show that in many cases there is no surface $S$ with this $a$-value.

\begin{theo}
\label{theo: 34classification}
Suppose that $Y$ is a normal surface and that $L$ is an ample Cartier divisor on $Y$ such that $a(Y,L) = 3/4$.  Then there is a resolution $\phi: \tilde{Y} \to Y$ and a diagram
\begin{equation*}
\xymatrix{\tilde{Y} \ar[r]^{f}\ar[d]_{\phi}&  \mathbb{P}^{2}\\
Y & }
\end{equation*}
such that:
\begin{itemize}
\item $f$ is a birational map whose non-trivial fibers are chains of (-2)-curves with a single (-1)-curve on the end, and
\item $\phi^{*}L = f^{*}\mathcal{O}(4) - K_{\tilde{Y}/\mathbb{P}^{2}} = -K_{\widetilde{Y}} + f^{*}\mathcal{O}(1)$.
\end{itemize}
\end{theo}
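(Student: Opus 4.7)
The plan is to run an MMP on the pair $(\tilde Y, \tfrac{3}{4}\phi^*L)$, identify its minimal model as $(\mathbb P^2, 4H)$, and read off both the diagram and the fiber structure from how the MMP unwinds.

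First I would take $\phi$ to be the minimal resolution of $Y$. Since $a(\tilde Y, \phi^*L) = 3/4$ by definition, $K_{\tilde Y} + \tfrac{3}{4}\phi^*L$ is pseudoeffective, and the MMP terminates at a minimal model $(\tilde Y', L')$ with $K_{\tilde Y'} + \tfrac{3}{4}L'$ nef of Iitaka dimension at most $1$. Iitaka dimension $1$ is ruled out because a general fiber $F$ of the Iitaka fibration is a smooth rational curve with $F^2 = 0$ and $K\cdot F = -2$, forcing the integer $L'\cdot F$ to equal $8/3$. Hence $K_{\tilde Y'} + \tfrac{3}{4}L' \equiv 0$, so $-4K_{\tilde Y'} \equiv 3L'$; since $\gcd(3,4)=1$ and $L'$ is integral, $K_{\tilde Y'}$ is $3$-divisible in $\mathrm{Pic}(\tilde Y')$. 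Among smooth weak del Pezzo surfaces only $\mathbb P^2$ has this property (Hirzebruch surfaces have $-K = 2\sigma+(n+2)f$, which is not $3$-divisible, and blow-ups of $\mathbb P^2$ at $r\geq 1$ points have $-K = 3H - E_1 - \ldots - E_r$ whose exceptional generators are not $3$-divisible), so $\tilde Y' = \mathbb P^2$ and $L' = 4H$.

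Next, let $f: \tilde Y \to \mathbb P^2$ be the composition of the MMP contractions. Writing the $i$-th step as $\pi_i: \tilde Y_{i-1}\to \tilde Y_i$ contracting a $(-1)$-curve $C_i$ with $m_i := L_{i-1}\cdot C_i$, the relation $K_{\tilde Y_{i-1}} + L_{i-1} = \pi_i^*(K_{\tilde Y_i} + L_i) + (1-m_i)C_i$ holds, and the MMP constraint forces $m_i \in \{0,1\}$. The key claim is $m_i = 1$ throughout. At the first step this holds because $\phi$ is minimal, so no $(-1)$-curve on $\tilde Y$ is $\phi$-exceptional, giving $\phi^*L\cdot C_1 = L\cdot\phi(C_1) \geq 1$ as $L$ is Cartier ample on $Y$; combined with $\leq 1$, this gives $m_1 = 1$. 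Inductively, a $(-1)$-curve on an intermediate $\tilde Y_i$ that the MMP contracts must arise either from a $(-1)$-curve of $\tilde Y$ or from a $\phi$-exceptional $(-2)$-curve of $\tilde Y$ meeting an earlier contracted $C_j$; a non-$\phi$-exceptional $(-2)$-curve of $\tilde Y$ would instead pick up $L$-degree $\geq 2$ after the first adjacent contraction and be excluded from further MMP. A direct pullback computation confirms $m_i = 1$ in both admissible cases. Telescoping the relation then yields $K_{\tilde Y} + \phi^*L = f^*(K_{\mathbb P^2} + 4H) = f^*H$, which rearranges to $\phi^*L = f^*\mathcal O(4) - K_{\tilde Y/\mathbb P^2}$.

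Finally, for an $f$-exceptional irreducible curve $E$ on $\tilde Y$, adjunction and the formula give $\phi^*L\cdot E = -K_{\tilde Y/\mathbb P^2}\cdot E = 2 + E^2$; nef-ness of $\phi^*L$ then forces $E^2 \in \{-1,-2\}$. A branching in the dual graph of a fiber would require a curve with two or more children in the iterated blow-up tree and hence $E^2 \leq -3$, which is excluded. Thus each non-trivial fiber is a linear chain; the $(-2)$-curves (with $\phi^*L\cdot E = 0$) are $\phi$-exceptional, forming an $A_{k-1}$-configuration over a canonical singularity of $Y$, and the unique $(-1)$-curve in each chain (with $\phi^*L\cdot E = 1$) sits at the endpoint corresponding to the latest-generation blow-up, as claimed. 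The main obstacle is the inductive tracking of $L$-degrees in the third paragraph needed to pin down $m_i = 1$; the rest is a formal consequence.
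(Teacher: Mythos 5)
Your overall strategy is the same as the paper's: run the MMP for $K_{\widetilde{Y}} + \tfrac34\phi^*L$, identify the end result as $\mathbb P^2$ with $f_*\phi^*L = \mathcal O(4)$, show that each contracted $(-1)$-curve has $L$-degree exactly $1$, and then read off the formula and the fiber structure. For the identification of $\mathbb{P}^{2}$, your argument via $3$-divisibility of $K$ in $\mathrm{Pic}$ is a minor variant of the paper's (which observes that any weak del Pezzo surface other than $\mathbb P^2$ carries a curve of anticanonical degree $2$, forcing an impossible $L$-degree of $8/3$); both work. The endgame --- telescoping the discrepancy relation once all $m_i=1$ are known, and deducing $E^2 \in \{-1,-2\}$ with no branching in fibers --- also matches the paper.

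The genuine gap is in your third paragraph, and you flag it yourself. Your dichotomy ``a $(-1)$-curve on an intermediate model arises either from a $(-1)$-curve of $\widetilde Y$ or from a $\phi$-exceptional $(-2)$-curve of $\widetilde Y$ meeting an earlier contracted curve'' is incomplete and unjustified: a curve $C_i$ contracted at stage $i$ can have as its strict transform on $\widetilde Y$ a curve of self-intersection $\le -3$, if it passes with multiplicity through several of the intermediate blown-down points; you have not ruled out $m_i=0$ for such curves. Closing this requires a more careful bookkeeping of the coefficients in the total transform $\psi_{i-1}^*C_i$ and the $\phi^*L$-degrees $\phi^*L\cdot\widehat{C}_j$ of the earlier contracted curves, since these degrees need not all equal $1$ even when all $m_j=1$. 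The paper sidesteps exactly this difficulty by \emph{not} starting from the minimal resolution: it takes an arbitrary resolution, shows that over each exceptional center of $f$ the $\delta_i$'s can be reorganized into a block of $1$'s followed by a block of $0$'s, and then contracts away the $\delta_i=0$ part (using that $\phi$ factors through any intermediate model where $\psi_i^*\psi_{i*}\phi^*L=\phi^*L$) to land on a model where all $\delta_i=1$. Your assertion that the minimal resolution already works is in fact true --- the model the paper ends with has no $(-1)$-curves contracted by $\phi$, hence is minimal --- but it does not follow from the two cases you list and needs its own proof.
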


\begin{proof}
Let $\phi: \tilde{Y} \to Y$ be any resolution.  Note that by the argument of Lemma \ref{lemm: spectrumforsurfaces} the value $a(\tilde{Y},\phi^{*}L)=3/4$ is only possible in the adjoint rigid case.

Suppose we run the MMP on $\tilde{Y}$ for the terminal pair $K_{\tilde{Y}} + \frac{3}{4}\phi^{*}L$.  The result is a sequence of contractions of $(-1)$-curves that are negative for $K_{\tilde{Y}} + \frac{3}{4}\phi^{*}L$.  Since the pair is rigid, the program terminates with a morphism $f: \tilde{Y} \to S$ where $S$ is a weak Fano variety and $\frac{3}{4}f_{*}\phi^{*}L \equiv -K_{S}$.  Note that if $S$ carries any curve $C$ with $-K_{S} \cdot C = 2$ then $f_{*}\phi^{*}L \cdot C = 8/3$, an impossibility.  We conclude that $S = \mathbb{P}^{2}$ and $f_{*}\phi^{*}L = \mathcal{O}(4)$.

We now study the steps of the MMP in more detail.  The program realizes $\widetilde{Y}$ as a sequence of blow-ups of $\mathbb{P}^{2}$.  We write this sequence as:
\begin{equation*}
\xymatrix{
\widetilde{Y} = S_{r} \ar@{>}[r]^-{f_{r}} & S_{r-1} \ar@{>}[r]^-{f_{r-1}} &S_{r-2} \ldots \ar@{>}[r]^-{f_2} &S_{1} \ar@{>}[r]^-{f_{1}}  & S_{0} = \mathbb{P}^{2}
}
\end{equation*}
We also let $\psi_{i} : \widetilde{Y} \to S_{i}$ denote the composition of blow-ups.

Each step of MMP is negative with respect to $K_{S_{i}} + \frac{3}{4} {\psi_{i}}_{*}\phi^{*}L$.  Since the pushforward of $\phi^{*}L$ is nef and Cartier, we see that $\psi_{i*}\phi^{*}L$ has intersection either $0$ or $1$ against the curve contracted by $f_{i}$.  We denote this number by $\delta_{i}$.  Note that if $\delta_{i}=0$ then $\psi_{i*}\phi^{*}L = f_{i}^{*}\psi_{i-1*}\phi^{*}L$, and if $\delta_{i} = 1$ then $\psi_{i*}\phi^{*}L = f_{i}^{*}\psi_{i-1*}\phi^{*}L - F_{i}$ where $F_{i}$ is the exceptional divisor on $S_{i}$.

Consider the preimage of a fixed point $p$ which is an exceptional center for $f$.  We may rearrange the order of the blow-ups so that the first $k$ of the blow-ups extract the preimage of the point.  Suppose that $\delta_{1} = 0$.  Then also $\delta_{2}=0$, since otherwise $\psi_{2*}\phi^{*}L$ would have negative intersection against the strict transform of $F_{1}$.  The same logic shows that all $\delta_{i} = 0$ for $1 \leq i \leq k$.  Suppose instead that $\delta_{1} = 1$.  Then we can repeat the same argument with the next step to see that if $\delta_{2} = 0$ then any further blow-up centered over $F_{2}$ will also have $\delta$-value $0$.  In sum, after possibly rearranging the order of the blow-ups, there is some $0 \leq j \leq k$ such that $\delta_{i} = 1$ for $1 \leq i \leq j$ and $\delta_{i} = 0$ for $j < i \leq k$.

Note however that since $L$ is ample on $Y$, the map $\phi: \widetilde{Y} \to Y$ will factor through any intermediate stage of the MMP satisfying $\psi_{i}^{*}\psi_{i*}\phi^{*}L = \phi^{*}L$.  Thus after replacing $\widetilde{Y}$ by a birational model, we may suppose that every $\delta_{i} = 1$.  It is then easy to see by induction that $\phi^{*}L = f^{*}\mathcal{O}(4) - K_{\widetilde{Y}/\mathbb{P}^{2}}$.

Finally, we need to verify the statement on fibers.  Maintaining the notation from above, we know that $\delta_{1} = 1$.  If there is a second blow-up, then $\psi_{2*}\phi^{*}L$ has vanishing intersection with the strict transform of $F_{1}$.  Thus, by the same logic as earlier, no further blow-ups can happen along the strict transform of $F_{1}$.  Repeating the argument inductively finishes the proof.
\end{proof}

To continue the argument, we make a temporary definition:

\begin{defi}
Let $X$ be a normal $\mathbb{Q}$-Gorenstein surface and let $L$ be a big and nef $\mathbb{Q}$-Cartier divisor on $X$.  Then we set
\begin{equation*}
\widetilde{a}(X, L) := \min \{ t\in \bR \mid K_{X} + tL \in \Eff^{1}(X) \}.
\end{equation*}
\end{defi}

The distinction with $a(X,L)$ is that we do not first pass to a resolution of $X$.  In particular, we can not expect $\widetilde{a}(X,L)$ to control rational curves in the same way.  (Note however that if $X$ has only log terminal singularities then by \cite{KM99} $\widetilde{a}(X,L)$ controls the presence of $K_{X}$-negative rational curves.) 


We now continue our study by specializing the argument based on the $L$-degree of $Y$.  

\begin{prop} \label{prop: logterminalcase}
Let $Y$ be a normal $\mathbb{Q}$-Gorenstein surface.  Suppose that $L$ is an ample Cartier divisor on $Y$ satisfying $a(Y,L) = 3/4$ and $L^{2} \geq 6$.  Then $\widetilde{a}(Y,L) > 0$.
\end{prop}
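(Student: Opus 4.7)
My plan is to convert the structural description from Theorem~\ref{theo: 34classification} into a numerical obstruction on $K_{Y}$: I will compute $K_{Y} \cdot L$ explicitly, show it is strictly negative under the hypothesis $L^{2} \geq 6$, and then conclude using the fact that $\widetilde{a}(Y, L) > 0$ if and only if $K_{Y} \notin \Eff^{1}(Y)$ (which is immediate since $L$ is ample).

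To do this, I invoke Theorem~\ref{theo: 34classification} to obtain a resolution $\phi : \widetilde{Y} \to Y$ and a birational morphism $f : \widetilde{Y} \to \mathbb{P}^{2}$ with $\phi^{*}L = f^{*}\mathcal{O}(1) - K_{\widetilde{Y}}$. Since any birational morphism between smooth projective surfaces is a composition of point blow-ups, I write $f$ as a composition of $k$ blow-ups for some $k \geq 0$; this immediately gives $K_{\widetilde{Y}}^{2} = 9 - k$ and $K_{\widetilde{Y}} \cdot f^{*}\mathcal{O}(1) = K_{\mathbb{P}^{2}} \cdot \mathcal{O}(1) = -3$ by the projection formula. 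Expanding the squared and mixed intersections yields
\[
L^{2} = (f^{*}\mathcal{O}(1) - K_{\widetilde{Y}})^{2} = 16 - k, \qquad K_{\widetilde{Y}} \cdot \phi^{*}L = K_{\widetilde{Y}} \cdot (f^{*}\mathcal{O}(1) - K_{\widetilde{Y}}) = k - 12.
\]
Because $Y$ is $\mathbb{Q}$-Gorenstein, I may write $K_{\widetilde{Y}} = \phi^{*}K_{Y} + \sum a_{i}E_{i}$ with each $E_{i}$ being $\phi$-exceptional; since $E_{i} \cdot \phi^{*}L = 0$ by the projection formula, this descends to the equality $K_{Y} \cdot L = K_{\widetilde{Y}} \cdot \phi^{*}L = k - 12$.

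Finally, the hypothesis $L^{2} \geq 6$ forces $k \leq 10$, so $K_{Y} \cdot L \leq -2 < 0$. Since $L$ is ample, this rules out $K_{Y} \in \Eff^{1}(Y)$, so $\widetilde{a}(Y, L) > 0$ as required. I do not anticipate any significant obstacle: the heart of the argument is a short calculation fed by Theorem~\ref{theo: 34classification}, and the only delicate step is invoking the $\mathbb{Q}$-Gorenstein hypothesis to transfer intersection computations from $\widetilde{Y}$ back to $Y$.
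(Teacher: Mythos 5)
Your proposal is correct, and it takes a genuinely different — and considerably shorter — route than the paper. After invoking Theorem~\ref{theo: 34classification} for the identity $\phi^{*}L = f^{*}\mathcal{O}(1) - K_{\widetilde{Y}}$, you compute $K_{Y}\cdot L = K_{\widetilde{Y}}\cdot\phi^{*}L$ directly (the $\mathbb{Q}$-Gorenstein hypothesis and projection formula make the $\phi$-exceptional discrepancy terms vanish against $\phi^{*}L$), obtaining the clean formulas $L^{2} = 16 - k$ and $K_{Y}\cdot L = k - 12$. The hypothesis $L^2 \geq 6$ then immediately gives $K_{Y}\cdot L < 0$, which rules out $K_Y \in \Eff^1(Y)$ since $L$ is ample. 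The paper's proof instead goes through a detailed classification of the possible $\phi$-exceptional configurations on $\widetilde{Y}$ (single/multiple fibers over lines, conics, pairs of lines, with various attached chains of $(-2)$-curves), solves for the discrepancy coefficients in $\phi^{*}K_Y$ case by case, and verifies in each case that $\phi^{*}K_{Y}\cdot f^{*}\mathcal{O}(1) < 0$, i.e.\ that $K_Y$ is negative against the pushforward of a general line. The two approaches pair $K_Y$ against different moving curves: yours against $L$ itself, the paper's against $\phi_* f^* \mathcal{O}(1)$. Your pairing is computable purely numerically, with no need to determine $\phi^{*}K_Y$, which is precisely what forces the paper into its case analysis. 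Your argument even establishes the conclusion under the weaker hypothesis $L^{2}\geq 5$ (giving $k\leq 11$, hence $K_Y\cdot L\leq -1 < 0$). The paper's case-by-case study does yield more refined structural information about the singularities of $Y$, but for the proposition as stated your purely numerical argument is complete and substantially cleaner.
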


\begin{proof}
Apply Theorem \ref{theo: 34classification} to construct a resolution $\phi: \widetilde{Y} \to Y$ and a birational map $f: \widetilde{Y} \to \mathbb{P}^{2}$.  Note that $f$ can be the blow-up of at most $10$ points, and although these points can be infinitely near the fibers of $f$ have the simple description given in Theorem \ref{theo: 34classification}.

We now study which curves $C$ on $\widetilde{Y}$ can possibly satisfy $\phi^{*}L \cdot C = 0$ using the description as a blow-up of $\mathbb{P}^{2}$.  If such a curve $C$ is $f$-exceptional, then it must be a $(-2)$-curve in one of the fibers of $f$.  Otherwise $C$ is the strict transform of a curve in $\mathbb{P}^{2}$.
\begin{itemize}
\item If $C$ is the strict transform of a line $T$, then we must have blown-up four points on $T$.  Note that if we blow-up five points on a line then $L$ is no longer nef.
\item If $C$ is the strict transform of a conic $T$, then we must have blown-up eight points of $T$.
\end{itemize}
We claim that no higher degrees are possible.  Indeed, suppose $C$ has vanishing intersection against $\phi^{*}L$ and is the strict transform of an irreducible curve of degree $d$.  We let $m_{i}$ denote the multiplicity of the strict transform of $C$ at the $i$th blown-up point.  Thus
\begin{equation*}
d = K_{\widetilde{Y}} \cdot C = -3d + \sum m_{i}.
\end{equation*}
We also have $C^{2} = d^{2} - \sum m_{i}^{2}$.  Computing the arithmetic genus:
\begin{equation*}
-2 \leq (K_{\widetilde{Y}} + C) \cdot C = d + d^{2} - \sum m_{i}^{2}.
\end{equation*}
The right hand side is maximized when the sum of squares is as small as possible subject to the condition that $4d = \sum m_{i}$ and there are at most $10$ points.  Under these constraints the inequality can not be satisfied when $d > 2$.

So the possible fibers for the map $\phi$ are as follows:
\begin{enumerate}
\item A fiber can be a chain of $(-2)$-curves.
\item A fiber can be the strict transform of a line blown-up in exactly 4 points, with possibly some chains of $(-2)$-curves attached.
\item A fiber can be the strict transform of a conic blown-up in exactly 8 points, with possibly some chains of $(-2)$-curves attached.
\item A fiber can be the strict transform of two lines which are each blown-up in exactly 4 points and which meet at a non-exceptional point in $\mathbb{P}^{2}$, with possibly some chains of $(-2)$-curves attached.
\item A fiber can be the strict transform of two lines which are each blown-up in exactly 4 points and which meet at an exceptional point in $\mathbb{P}^{2}$.  In this case the strict transforms are connected by a chain of $(-2)$-curves of length up to 4 and there are possibly some chains of $(-2)$-curves attached to this structure.
\end{enumerate}
One can easily determine $\phi^{*}K_{Y}$ in each case using adjunction, but there are too many cases to enumerate here.  Often $Y$ has log canonical singularities, but this is not always the case.

The set of all fibers of $\phi$ falls into one of the following categories:
\begin{itemize}
\item a single fiber of any of the types above along with possibly some fibers of type (1),
\item up to four fibers of type (2) along with possibly some fibers of type (1),
\item a single fiber of type (3) and a single fiber of type (2),
\item a single fiber of type (4) and a single fiber of type (2),
\item a single fiber of type (5) and a single fiber of type (2).
\end{itemize}
In each of these cases we claim that $\phi^{*}K_{Y}$ has negative intersection against the strict transform of a general line on $\mathbb{P}^{2}$. This implies that $\widetilde{a}(Y,L) > 0$. Let us analyze the above claim briefly for each case.  Note that $f_{*}\phi^{*}K_{Y} = \mathcal O(-3) - f_{*}K_{\widetilde{Y}/Y}$.  Thus we will write $f_{*}K_{\widetilde{Y}/Y}$ as a weighted sum of curves in $\mathbb{P}^{2}$ and we just need to verify that the total degree is $>-3$.

\noindent
\textbf{Unique fiber: a single conic with chains of $(-2)$-curves.}
Let $C$ be the strict transform of a conic which is contracted by $\phi$. Let $P_1, \cdots, P_r$ be the points on $C$ such that chains of $(-2)$-exceptional curves are attached to them. Let $m_i= \text{the length of the chain at } P_i +1$. We know that
\[
\sum_{i=1}^r m_i \leq 10.
\]
Let $a$ be the coefficient of $C$ in $K_{\widetilde{Y}}- \phi^*K_Y$. Let $b_i$ be the coefficient of the end curve of the chain at $P_i$ in $K_{\widetilde{Y}}- \phi^*K_Y$. Then we have
\[
a = m_ib_i.
\]
Since the self intersection of $C$ is $-4$, by taking the intersection of $C$ against $K_{\widetilde{Y}}- \phi^*K_Y$, we obtain
\[
2 = -4a + \sum_{i=1}^r \frac{m_i-1}{m_i}a
\]
We would like to minimize $2a$ and show that this is greater than $-3$.
To minimize $a$ we need to maximize $\sum_{i=1}^r \frac{m_i-1}{m_i}$, and this is achieved when $r=5$ and all $m_i$'s are $2$.
In this situation, $a = -4/3$ hence our assertion follows.

\noindent
\textbf{Unique fiber: a single line with chains of $(-2)$-curves.} Similar to the single conic case.

\noindent
\textbf{Unique fiber: two lines with chains of $(-2)$-curves.}
Let $C_1, C_2$ be the strict transforms of the two lines and  denote the coefficient of $C_1, C_2$ in $K_{\widetilde{Y}}- \phi^*K_Y$ by $a, b$ respectively. First suppose that the intersection point of two lines is nonexceptional. Let $P_1, \cdots, P_r$ be the points on $C_1$ such that the chains of $(-2)$-curves are attached to these points and its length $+1$ is denoted by $m_i$. We let $Q_1, \cdots, Q_s$, $n_1, \cdots, n_s$ denote the same thing for $C_2$. By taking the intersection of $C_1$ against $K_{\widetilde{Y}}$, we obtain
\[
1 = -3a + \left(\sum_i \frac{m_i-1}{m_i}\right)a + b.
\]
Similarly for $C_2$ we have
\[
1= a -3b + \left(\sum_j \frac{n_j-1}{n_j} \right)b.
\]
We denote $\sum_i \frac{m_i-1}{m_i}$ by $\alpha$ and $\sum_j \frac{n_j-1}{n_j}$ by $\beta$ and by solving the above linear equation, one obtains
\[
a+b = \frac{\alpha+\beta - 8}{\alpha\beta - 3(\alpha+\beta)+8}
\]
Our goal here is to minimize $a+b$ and see that it is greater than $-3$. A simple calculus argument shows that if one fixes $\alpha + \beta$, then $a + b$ is minimized when $\alpha = \beta$. Under this assumption, if we vary $\alpha = \beta$, then $a+ b$ is minimized when $\alpha$ is maximum which is bounded by $5/4$ in our situation. In this situation $a+b = -11/4 > -3$. Thus our assertion follows.
When the intersection is an exceptional point, the analysis is similar.

\noindent
\textbf{Two fibers: a conic and a line.}
In this situation, for a conic one can attach at most two chains with two $(-2)$-curves, and the minimum of the coefficient of such a conic is $-2/3$.
For a line one can attach at most two chains with six $(-2)$-curves, and the minimum of the coefficient of such a line is $-2/3$.
Thus our assertion follows.

\noindent
\textbf{Two fibers: a pair of lines and a line.}
For the line disjoint from others, one can attach at most two chains with six $(-2)$-curves, so the minimum of the coefficient of such a line is $-2/3$. For the pair of lines, one can attach at most two chains with two $(-2)$-curves and the minimum value of the sum of coefficients of two lines is $-19/28$. Thus our assertion follows.

\noindent
\textbf{Two fibers: two disjoint lines.}
Two lines are disjoint, and for each line one can attach at most four chains with total six $(-2)$-curves, and the minimum of the coefficient of such a line is $-4/3$. Thus our assertion follows.

\noindent
\textbf{Three fibers: three disjoint lines.}
Three lines are disjoint, and for each line one can attach at most three chains with total five $(-2)$-curves, and the minimum of the coefficient of such a line is $-6/7$.
Thus our assertion follows.

\noindent
\textbf{Four fibers: four disjoint lines.}
To achieve this arrangement, we can't have any $-2$ curves anywhere and we get four coefficients of $-1/3$.
\end{proof}

We can now apply the classification to understand surfaces in Fano threefolds with $a$-invariant equal to $3/4$.

\begin{theo}
\label{theo: a-value=3/4}
Let $X$ be a Fano threefold of Picard rank $1$ and index $1$.
\begin{itemize}
\item Suppose that $X$ has degree $\geq 18$.  Then for every surface $Y$ on $X$ we have $a(Y,H) \neq 3/4$.
\item Suppose that $X$ has degree $10 \leq H^3 \leq 16$.  Then for every surface $Y$ on $X$ we have $a(Y,H) \neq 3/4$ except possibly for non-normal members of the linear series $|H|$.
\end{itemize}
\end{theo}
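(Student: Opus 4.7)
The plan is to extract a self-intersection bound $L^{2} \leq 16$ from Theorem~\ref{theo: 34classification} and combine it with adjunction and Proposition~\ref{prop: logterminalcase}. Fix an integral surface $Y \subset X$ with $a(Y, H) = 3/4$, set $L := H|_{Y}$, and use $\mathrm{Pic}(X) = \mathbb{Z}H$ to write $Y \sim nH$, so that $L^{2} = H^{2} \cdot Y = nH^{3}$.

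To obtain the bound, let $\nu : Y^{\nu} \to Y$ be the normalization. Since $\nu$ is finite and birational, $\nu^{*}L$ is an ample Cartier divisor on the normal surface $Y^{\nu}$ with $(\nu^{*}L)^{2} = L^{2}$, and $a(Y^{\nu}, \nu^{*}L) = 3/4$ by birational invariance. The pair is necessarily adjoint rigid because $3/4$ is not of the form $2/k$ (Lemma~\ref{lemm: spectrumforsurfaces}), so Theorem~\ref{theo: 34classification} applies and produces a resolution $\phi : \widetilde{Y} \to Y^{\nu}$ together with a birational morphism $f : \widetilde{Y} \to \mathbb{P}^{2}$ satisfying
\begin{equation*}
\phi^{*}\nu^{*}L = f^{*}\mathcal{O}(4) - K_{\widetilde{Y}/\mathbb{P}^{2}}.
\end{equation*}
Using that $K_{\widetilde{Y}/\mathbb{P}^{2}}$ is $f$-exceptional and that $K_{\widetilde{Y}/\mathbb{P}^{2}}^{2} = -r$, where $r$ is the number of blow-ups comprising $f$, a short calculation gives $L^{2} = (\phi^{*}\nu^{*}L)^{2} = 16 - r \leq 16$.

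Both halves of the theorem now follow from this bound together with one adjunction argument. If $n \geq 2$ then $L^{2} = nH^{3} \geq 20 > 16$, a contradiction; and if $n = 1$ but $H^{3} \geq 18$ then $L^{2} = H^{3} \geq 18 > 16$, a contradiction. This already settles the case $H^{3} \geq 18$. The only remaining situation is $n = 1$ with $10 \leq H^{3} \leq 16$, i.e.\ $Y \in |H|$. Assume in addition that $Y$ is normal. Then $Y$ is Cartier on the smooth threefold $X$, hence Gorenstein, and adjunction gives $K_{Y} = (K_{X} + Y)|_{Y} \sim 0$, so $\widetilde{a}(Y, L) = 0$. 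But $L$ is ample Cartier with $L^{2} = H^{3} \geq 10 \geq 6$, so Proposition~\ref{prop: logterminalcase} forces $\widetilde{a}(Y, L) > 0$, a contradiction. Hence no normal member of $|H|$ admits $a(Y, H) = 3/4$, leaving exactly the non-normal hyperplane sections permitted in the statement.

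The delicate point is the reduction to Theorem~\ref{theo: 34classification}, which requires that the hypotheses (normality, ampleness of $L$, value of $a$) survive passage to the normalization. This is routine because normalization of an integral complex variety is finite and birational, so it preserves all of these properties as well as the self-intersection $L^{2}$. Once this is in place, the remainder is degree bookkeeping and a single adjunction computation.
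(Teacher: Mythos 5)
Your argument is correct and follows the same overall structure as the paper's proof: first establish the numerical bound $L^{2} \leq 16$, which rules out everything except hyperplane sections once $H^{3} \geq 10$, and then combine adjunction (to get $\widetilde{a}(Y,H)=0$) with Proposition~\ref{prop: logterminalcase} to dispose of the normal hyperplane sections in degrees $10 \leq H^{3} \leq 16$.

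The one genuine difference is how the bound is obtained. The paper invokes \cite[Proposition 3.6]{LT16} to conclude that an adjoint rigid pair with $a$-value $3/4$ must satisfy $\bigl(\tfrac{3}{4}H\bigr)^{2}\cdot Y \leq 9$, i.e.\ $L^{2}\leq 16$, whereas you derive the same bound internally from Theorem~\ref{theo: 34classification}: from $\phi^{*}\nu^{*}L = f^{*}\mathcal{O}(4) - K_{\widetilde{Y}/\mathbb{P}^{2}}$, the facts that $K_{\widetilde{Y}/\mathbb{P}^{2}}$ is $f$-exceptional and $K_{\widetilde{Y}/\mathbb{P}^{2}}^{2} = -r$ immediately give $L^{2} = 16 - r \leq 16$. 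Your route is more self-contained and makes the origin of the threshold $16$ transparent. The normalization step you insert is also the right move, since Theorem~\ref{theo: 34classification} requires a normal surface, and passing to $Y^{\nu}$ preserves the $a$-invariant (a resolution of $Y^{\nu}$ is a resolution of $Y$), the ampleness and Cartierness of $\nu^{*}L$ (as $\nu$ is finite), and the self-intersection $L^{2}$ (projection formula for the degree-one finite map $\nu$). So the bound you prove applies to the possibly non-normal $Y$ as needed. The remaining bookkeeping and the adjunction step match the paper exactly.
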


\begin{proof}
Recall that if $a(Y,H) = 3/4$ then the adjoint pair on a resolution is rigid.  Thus by the arguments of \cite[Proposition 3.6]{LT16} we know that if $(\frac{3}{4}H)^{2} \cdot Y > 9$ then $a(Y,H) \neq 3/4$.  Thus the result follows immediately when the degree of $X$ is $\geq 18$ or when $Y$ is not a hyperplane section of $X$. The only remaining cases to consider are when the degree is $10 \leq H^{3} \leq 16$ and $Y$ is a normal hyperplane section of $X$. This implies that $Y$ has isolated singularities and is Gorenstein.  By adjunction we know that $\widetilde{a}(Y,H) = 0$.  Applying Proposition \ref{prop: logterminalcase} we see that if $a(Y,H) = 3/4$ then $\widetilde{a}(Y,H) > 0$, a contradiction.
\end{proof}

\section{$a$-covers}
\label{sec: a-covers}

To study Manin's Conjecture one must analyze generically finite covers as well as subvarieties.  In this section we analyze the $a$-invariant for generically finite covers and deduce some consequences for the behavior of rational curves.  

\begin{defi}
Let $X$ be a smooth uniruled projective variety and let $L$ be a big and nef $\mathbb Q$-divisor on $X$.
An $a$-cover of $X$ with respect to $L$ is a generically finite dominant morphism of degree $\geq2$ from a smooth projective variety $f: W\rightarrow X$ such that $a(W, f^*L)= a(X, L)$.

We say that an $a$-cover $f: W\rightarrow X$ is rigid if $(W,f^{*}L)$ is adjoint rigid.
\end{defi}

The geometry of $a$-covers for Fano threefolds has been studied in \cite{LT16}:

\begin{theo}
\label{theo: a-covers}
Let $X$ be a smooth Fano threefold of Picard rank $1$, index $1$, and degree $H^{3} \geq 4$.  Furthermore if $H^{3} = 4$ assume that $-K_{X}$ is very ample.
Assume that $X$ is general in its moduli.
Let $F$ be the Fano scheme of conics on $X$ and $\pi : \mathcal U \rightarrow F$ be the universal family with the evaluation map $s: \mathcal U \rightarrow X$.
Then $s: \mathcal U \rightarrow X$ is an $a$-cover.
Moreover, any $a$-cover $f: Y \rightarrow X$ rationally factors through $s: \mathcal U \rightarrow X$.
\end{theo}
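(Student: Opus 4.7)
The plan is to prove the two assertions separately. Throughout $H = -K_X$, so $a(X,H)=1$.

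For the first assertion, that $s : \mathcal{U} \to X$ is an $a$-cover, I would argue directly. For each $X$ in our list the Fano scheme $F$ of conics is two-dimensional, so $\mathcal{U}$ is three-dimensional and $s$ is generically finite and dominant; the bound $\deg s \geq 2$ follows from the classical fact that more than one conic passes through a general point. It remains to show $a(\mathcal{U}, s^*H) = 1$. The inequality $a(\mathcal{U}, s^*H) \leq 1$ follows from the ramification formula: since $K_X + H = 0$, we have $K_\mathcal{U} + s^*H = s^*(K_X + H) + R_s = R_s \geq 0$. For the reverse inequality I use that $\pi : \mathcal{U} \to F$ is generically a $\mathbb{P}^1$-bundle whose fibers map to smooth conics; on such a fiber $C$ we have $(K_\mathcal{U} + t \cdot s^*H) \cdot C = -2 + 2t < 0$ for every $t<1$, and since these $C$'s form a covering family, $K_\mathcal{U} + t \cdot s^*H$ fails to be pseudoeffective for $t<1$.

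For the second assertion, let $f : Y \to X$ be an $a$-cover with $Y$ smooth. Then $a(Y, f^*H) = 1$ and $R_f := K_Y + f^*H$ is effective and nonzero (Fano threefolds are simply connected, so $f$ is not \'{e}tale). I pass to the Iitaka fibration $\psi : Y \dashrightarrow W$ of $R_f$, resolved so that $\psi$ is a morphism. A general fiber $G$ of $\psi$ satisfies $a(G, f^*H|_G) = 1$ and is adjoint rigid. The central step is to show $\dim G = 1$.

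To rule out $\dim G = 2$: since $f$ is dominant and the $G$'s cover $Y$, the images $f(G)$ sweep out $X$, so a general $f(G)$ is a surface not contained in the proper closed locus $Z \subset X$ of surfaces swept by lines; in particular $a(f(G), H) \leq 1$. The inequality $a(G, f^*H|_G) \leq a(f(G), H)$ for the generically finite map $G \to f(G)$ (a consequence of the ramification formula) then forces $a(f(G), H) = 1$, so by Theorem~\ref{theo: weaklybalanced for Fano 3-folds} the pair $(f(G), H)$ is not adjoint rigid and admits a conic fibration to a curve. Pulling this fibration back via $G \to f(G)$ yields a nontrivial fibration on $G$, contradicting the adjoint rigidity of $G$. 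The remaining case $\dim G = 3$, where $(Y, f^*H)$ itself is adjoint rigid with $a = 1$, is the principal obstacle I anticipate: here one must appeal to finer structural results on $a$-covers from \cite{LT16} together with the generic moduli hypothesis on $X$ in order to exclude such rigid configurations (which would correspond to exotic ramification divisors that do not appear for a general $X$ in our classification).

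Once $\dim G = 1$ is established, $G \cong \mathbb{P}^1$ with $f^*H|_G$ of degree $2$, so $f(G) \subset X$ is either a smooth conic (when $f|_G$ is birational onto its image) or a line covered $2{:}1$; in either case $f(G)$ determines a point of the Hilbert scheme $F$ of conics. The varying fibers of $\psi$ then define a rational map $Y \dashrightarrow F$, and pairing it with $f$ produces a rational map $Y \dashrightarrow F \times X$ whose image lies in the incidence variety $\mathcal{U}$. This is the desired rational factorization $f = s \circ (Y \dashrightarrow \mathcal{U})$.
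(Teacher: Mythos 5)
Your argument follows the same structure as the paper's proof: replace $K_Y + f^*H$ by its Iitaka fibration and analyze the dimension of a general fiber $G$, using Theorem~\ref{theo: weaklybalanced for Fano 3-folds} to rule out $\dim G = 2$. Two points need sharpening.

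For the rigid case ($\dim G = 3$), the paper cites \cite[Theorem 1.9]{LT16}, which asserts that there is no adjoint-rigid $a$-cover of these Fano threefolds. You correctly identify \cite{LT16} as the source, but since this is the decisive structural input you should state and cite the specific result rather than leaving it as an anticipated obstacle; as written this step is a gap.

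In the case $\dim G = 1$ you allow $f(G)$ to be a line covered $2{:}1$. This cannot occur: the $G$'s cover $Y$ and $f$ is dominant, so a general $f(G)$ cannot lie in the proper closed locus $Z$ swept out by lines. Excluding this case is needed in any event, since a line covered $2{:}1$ does not canonically determine a point of the Hilbert scheme $F$ of conics (a nonreduced double line requires additional scheme-theoretic data not supplied by $f(G)$ alone). After this observation, $f|_G$ is birational onto a smooth conic and the rational map $Y \dashrightarrow F$, and hence to $\mathcal{U}$, is well-defined, exactly as in the paper.
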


\begin{proof}
It is clear that $s: \mathcal U \rightarrow X$ is an $a$-cover.
Suppose we have an $a$-cover $f: Y \rightarrow X$.
It is shown in \cite[Theorem 1.9]{LT16} that there is no rigid $a$-cover of $X$.
Thus the Iitaka dimension of $-f^*K_X + K_Y$ is positive.
If the Iitaka dimension is $1$, this means that $X$ admits a covering family of surfaces $S$ with $a(S, -K_X)=1$ such that $(S, -K_X)$ is adjoint rigid.
However, it follows from Theorem~\ref{theo: weaklybalanced for Fano 3-folds} that such a family of adjoint rigid surfaces does not exist.
Hence the Iitaka dimension of $-f^*K_X + K_Y$ is $2$ and a general fiber of the Iitaka fibration of $-f^*K_X + K_Y$ is $C\cong \mathbb P^1$ with $a(C, f^*L)=1$.
This means that the image $f(C)$ is a conic and $C\rightarrow f(C)$ is birational.
By the universal property, $f: Y \rightarrow X$ rationally factors through $s: \mathcal U \rightarrow X$.
\end{proof}

\begin{rema}
Note in particular that the proof shows that the ramification divisor for the cover $s: \mathcal{U} \rightarrow X$ has Iitaka dimension $2$.  The ramification divisor is contracted by the map to the parameter space.
\end{rema}

We now introduce a piece of notation we will use frequently in the rest of the paper.  Suppose that $M$ is a component of $\overline{M}_{0,0}(X, d)$ that generically parametrizes birational maps from $\mathbb{P}^{1}$ to its image.  We will let $M^{(n)} \subset \overline{M}_{0,n}(X, d)$ denote the component which generically parametrizes $n$-pointed curves such that the underlying curve is parametrized by $M$. 

The above theorem has the following implication for moduli spaces of rational curves.

\begin{theo} \label{theo: irrfibers}
Let $X$ be a smooth Fano threefold of Picard rank $1$, index $1$, and degree $H^{3} \geq 4$.  Furthermore if $H^{3} = 4$ assume that $-K_{X}$ is very ample.
Assume that $X$ is general in its moduli.
Let $M$ be a reduced dominant component of $\overline{M}_{0,0}(X, d)$ generically parametrizing birational stable maps from $\mathbb P^1$ with $d\geq 3$.
Consider the evaluation map
\[
\mathrm{ev}_1 : M^{(1)} \rightarrow X.
\]
Then for a general $x\in X$, the fiber $\mathrm{ev}_1^{-1}(x)$ is irreducible.
\end{theo}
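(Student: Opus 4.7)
I will argue by contradiction using the classification of $a$-covers in Theorem~\ref{theo: a-covers}. Suppose that for a general $x \in X$, the fiber $\mathrm{ev}_1^{-1}(x)$ is reducible. Since $M$ is a dominant component and a general $f \in M$ is free by Theorem~\ref{intro:expecteddim}, the evaluation map $\mathrm{ev}_1$ is smooth at the generic point of each irreducible component of a general fiber. Hence irreducibility of the general fiber is equivalent to its connectedness, and reducibility is equivalent to the Stein factorization $\mathrm{ev}_1 = \pi \circ g : M^{(1)} \to Y \to X$ having $\pi$ of degree $k \geq 2$. Let $\nu : \widetilde{Y} \to Y$ be a resolution of singularities.

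The central claim is that $\pi \circ \nu : \widetilde{Y} \to X$ is an $a$-cover with respect to $H = -K_X$. The upper bound $a(\widetilde{Y}, (\pi \circ \nu)^{*} H) \leq 1$ is immediate from Riemann--Hurwitz: the ramification divisor $R$ satisfies $K_{\widetilde{Y}} + (\pi \circ \nu)^{*} H = R \geq 0$. For the reverse inequality, I would use that $g$ is dominant, so that the $\mathbb{P}^{1}$-fibers of $M^{(1)} \to M$ over general $f$ map to rational curves on $\widetilde{Y}$ (after passing to strict transforms if necessary), producing a dominant family of rational curves on $\widetilde{Y}$ whose intersection with $(\pi \circ \nu)^{*} H$ equals $d$. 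This family is in bijection with $M$, so has dimension $d$. A comparison of this dimension with the expected dimension for rational curves of this class on $\widetilde{Y}$, together with the structure of the Stein factorization and Theorem~\ref{intro:expecteddim}, would force $a(\widetilde{Y}, (\pi \circ \nu)^{*} H) = 1$.

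Once the $a$-cover property is established, Theorem~\ref{theo: a-covers} provides a dominant rational factoring $\widetilde{Y} \dashrightarrow \mathcal{U} \to X$ through the universal conic family. To derive a contradiction, I would use the assumption $d \geq 3$. For a general $f \in M$, the associated lifted rational curve $\widetilde{C} \subset \widetilde{Y}$ maps to a rational curve $C' \subset \mathcal{U}$ projecting to $C = f(\mathbb{P}^{1})$, where $H \cdot C = d \geq 3$. Since the fibers of $\mathcal{U} \to F$ are conics of $H$-degree $2$, $C'$ cannot be contained in a single fiber, so its projection to $F$ is a curve $D$. Hence $C$ lies on the surface $S \subset X$ swept out by the conics parametrized by $D$, and by Theorem~\ref{theo: weaklybalanced for Fano 3-folds} such a surface is adjoint non-rigid and fibered by conics with $a(S, H) = 1$. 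Letting $f$ vary in $M$, a dimension count comparing $\dim M = d$ with the dimension of the family of rational curves of $H$-degree $d$ lying on such conic-fibered surfaces (whose base family is parametrized by curves in the $2$-dimensional $F$) then yields the desired contradiction. The main obstacle is the $a$-cover step, which requires controlling the $a$-invariant from below via a careful dimension-theoretic analysis of the lifted family, exploiting specifically the structure coming from the Stein factorization.
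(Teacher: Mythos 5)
Your overall setup is the same as the paper's: take a Stein factorization of the evaluation map, show the finite part is an $a$-cover, and invoke Theorem~\ref{theo: a-covers} to factor through the universal conic family $s:\mathcal{U}\to X$. Up to that point the skeleton is correct, and your sketch of the $a$-cover step (comparing $\dim M = d$ with the expected dimension $-K_{\widetilde{Y}}\cdot C$ to force $a(\widetilde{Y},(\pi\circ\nu)^*H)\geq 1$, against the Riemann--Hurwitz upper bound) is essentially the content the paper borrows from \cite[Proposition 5.6]{LT17}.

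The genuine gap is in the endgame. Your dimension count already gives $-K_{\widetilde{Y}}\cdot C = d = (\pi\circ\nu)^*H\cdot C$, but you do not extract the crucial consequence: this is exactly the statement that the ramification divisor $R = K_{\widetilde{Y}} + (\pi\circ\nu)^*H$ has $R\cdot C = 0$, so that a general member of the lifted family is disjoint from $R$. The proof of Theorem~\ref{theo: a-covers} shows that $R$ has Iitaka dimension $2$ and that the Iitaka fibration (after factoring through $\mathcal{U}$) is precisely the map to the parameter space $F$ of conics; so $R$ is vertical for the conic fibration, and $R\cdot C = 0$ forces $C$ to be contracted to a point of $F$, i.e., $C$ maps onto a conic and the original stable map is a multiple cover of a conic. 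That immediately contradicts $d\geq 3$ together with birationality. Instead, you assume the image has $H$-degree $d\geq 3$, conclude that $C'$ cannot sit inside a conic fiber and must map to a curve $D\subset F$, and then aim for a dimension count over surfaces $S_D$. But this step uses only the hypothesis you are trying to keep, not the contradiction hypothesis: the fact that $C'$ maps to a curve $D$ is exactly what is \emph{incompatible} with $R\cdot C=0$, and noticing that incompatibility is the contradiction. The proposed alternative dimension count over conic-fibered surfaces swept out by one-parameter families in the two-dimensional $F$ is not spelled out, and it is not clear it would close, since such surfaces cover $X$ and support $d$-dimensional families of degree-$d$ rational curves consistent with the expected dimension. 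In short: you have the raw ingredients (the equality of $-K_{\widetilde{Y}}\cdot C$ with $(\pi\circ\nu)^*H\cdot C$, and the structure of the Iitaka fibration of $R$) but do not combine them into the one-line observation that $C$ avoids $R$ and hence is a conic fiber, which is the heart of the paper's proof.
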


\begin{proof}
Let $\widetilde{M}^{(1)}$ be a smooth resolution of $M^{(1)}$ and take the Stein factorization
\[
\mathrm{ev}_1: \widetilde{M}^{(1)} \rightarrow Y \rightarrow X.
\]
We denote the intermediate morphisms by $f: Y \rightarrow X$ and $g: \widetilde{M}^{(1)} \rightarrow Y$.

For a contradiction we assume that $f : Y \rightarrow X$ is not birational.
Let $\beta: \widetilde{Y}\rightarrow Y$ be a smooth resolution.
Then it follows from the proof of \cite[Proposition 5.6]{LT17} that $f\circ \beta : \widetilde{Y} \rightarrow X$ is an $a$-cover, hence by the previous theorem, $f \circ \beta$ factors through $s:\mathcal U \rightarrow X$ after possibly replacing $\widetilde{Y}$ by a blow up.
Let $\pi : \widetilde{M}^{(1)\circ} \rightarrow M^\circ$ be the corresponding family of rational curves on $\widetilde{Y}$ over an open subset of $M$.
Then this is dense in the component of $\overline{M}_{0,0}(\widetilde{Y})$ so we conclude that
\[
-K_Y.C = d = \dim M = -(f\circ\beta)^*K_X.C,
\]
where $C$ is a general member of $\widetilde{M}^{(1)\circ}$.
This means that $C$ does not intersect the ramification divisor.  Since the ramification divisor has Iitaka dimension $2$ and is vertical for the map to the moduli space, the image of $C$ in $X$ must be a conic,
and $C\rightarrow X$ is a multiple cover onto a conic.
This contradicts with our assumption that a general member is birational.
\end{proof}

The following corollary is an immediate consequence of the previous theorem (just as in \cite[Observation 5.17]{LT17}).

\begin{coro}
\label{coro: irreducibility}
Let $X$ be a smooth Fano threefold of Picard rank $1$, index $1$, and degree $H^{3} \geq 4$.  Furthermore if $H^{3} = 4$ assume that $-K_{X}$ is very ample.
Assume that $X$ is general in its moduli.
Let $M_1$ be a reduced dominant component of $\overline{M}_{0,0}(X, d_1)$ which parametrizes a dominant family of stable maps which are generically birational maps from $\mathbb P^1$ to its image.
Let $M_2$ be another reduced dominant component of $\overline{M}_{0,0}(X, d_2)$ generically parametrizing stable maps from $\mathbb P^1$.
Suppose that $d_1\geq 3$. Then there is a unique component $N$ of 
\[
M_1^{(1)} \times_X M_2^{(1)}
\]
such that $N$ generically parametrizes a union of two free curves. Furthermore, $N$ has the expected dimension $d_1 + d_2 -1$.
\end{coro}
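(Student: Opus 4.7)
The plan is to derive the statement formally from Theorem~\ref{theo: irrfibers} and Lemma~\ref{lemm: glued components}. Since $M_1$ and $M_2$ are dominant components, Theorem~\ref{intro:expecteddim} implies each has the expected dimension $d_i$, so that $M_i^{(1)}$ is irreducible of dimension $d_i + 1$. After restricting to dense open subsets on which the evaluation morphisms $\mathrm{ev}_1: M_i^{(1)} \to X$ are flat, both maps are flat and dominant with pure $(d_i - 2)$-dimensional fibers.

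The decisive input is Theorem~\ref{theo: irrfibers}: because $d_1 \geq 3$ and $M_1$ is generically birational onto its image, the general fiber of $\mathrm{ev}_1: M_1^{(1)} \to X$ is irreducible. Applying Lemma~\ref{lemm: glued components} with $f = \mathrm{ev}_1: M_2^{(1)} \to X$ and $g = \mathrm{ev}_1: M_1^{(1)} \to X$ shows that $M_1^{(1)} \times_X M_2^{(1)}$ is irreducible. This gives both the uniqueness of the component $N$ and its dimension
\[
\dim N \,=\, \dim M_2^{(1)} + (d_1 - 2) \,=\, (d_2 + 1) + (d_1 - 2) \,=\, d_1 + d_2 - 1.
\]

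It remains to identify a general element of $N$ as a union of two free curves. A general point of $N$ corresponds to a datum $((C_1, p_1), (C_2, p_2))$ with $\mathrm{ev}_1(C_i, p_i) = x$ for a general $x \in X$. Since each $M_i$ is dominant and of the expected dimension, a general map parametrized by $M_i$ is free; in particular, if $M_2$ parametrizes multiple covers, then it parametrizes multiple covers of a dominant family of free rational curves, and the pullback of a globally generated bundle is still globally generated. For a general $x$, the marked points $p_i$ may be taken to be general points of the respective $C_i$, so the stable map $C_1 \cup_{p_1 = p_2} C_2 \to X$ obtained by gluing is a union of two free curves, as required.

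The only nontrivial input is Theorem~\ref{theo: irrfibers}; once that irreducibility is in hand, the uniqueness and dimension of $N$ follow formally from Lemma~\ref{lemm: glued components}, and the freeness of the two glued components at a general node follows from the standard fact that dominant expected-dimension families generically parametrize free curves.
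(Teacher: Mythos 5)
Your proposal is correct and follows essentially the same strategy as the paper: the paper restricts to the open loci $M_i^{(1)\circ}$ over the free curves (citing Koll\'ar, Corollary 3.5.4, for flatness of the evaluation map there), invokes the irreducibility of the general fiber of $\mathrm{ev}_1$ on $M_1^{(1)}$ from Theorem~\ref{theo: irrfibers}, and then concludes directly from Lemma~\ref{lemm: glued components}. You make the reliance on Theorem~\ref{theo: irrfibers} explicit where the paper leaves it implicit in the sentence introducing the corollary, but the decomposition and the two key inputs are identical.
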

\begin{proof}
Let $M_i^{\circ}$ be the open locus of $M_i$ parametrizing free curves.  If $M_i^{(1)\circ}$ denotes the preimage if $M_{i}^{\circ}$ in $M_{i}^{(1)}$ then the evaluation map $M_i^{(1) \circ} \rightarrow X$ is flat by \cite[Corollary 3.5.4]{Kollar}. Thus our assertion follows from Lemma~\ref{lemm: glued components}.
\end{proof}

The following lemma describes how free curves glue with lines.

\begin{lemm}
\label{lemm: free+lines}
Let $X$ be a smooth Fano threefold of Picard rank $1$, index $1$, and degree $H^{3} \geq 4$ such that $-K_{X}$ is very ample.  Assume that $X$ is general in its moduli.
Let $M$ be a reduced dominant component of $\overline{M}_{0,0}(X, d)$  which parametrizes a dominant family of stable maps which are generically birational maps from $\mathbb P^1$ to its image.
Then any component $N$ of $M^{(1)} \times_X \overline{M}_{0,1}(X, 1)$ has the expected dimension $d$ and its image under the natural map to $M$ has codimension at most $1$.
\end{lemm}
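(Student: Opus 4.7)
The plan is to combine a diagonal-based lower bound on $\dim N$ with flatness of the evaluation on pointed free curves for the matching upper bound, and then to extract the image codimension by factoring through $M \times F_{1}$, where $F_{1}$ denotes the Hilbert scheme of lines on $X$.

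For the dimension, each component $N$ of $M^{(1)} \times_{X} \overline{M}_{0,1}(X,1)$ is cut out of $M^{(1)} \times \overline{M}_{0,1}(X,1)$ by the condition that the two evaluations to $X$ agree, i.e.\ by the pullback of the regularly embedded codimension-$3$ diagonal $\Delta_{X} \subset X \times X$. This immediately gives $\dim N \geq (d+1) + 2 - 3 = d$, using that $\overline{M}_{0,1}(X,1)$ has dimension $2$ because for $X$ general in moduli the Hilbert scheme of lines on $X$ is a smooth curve. For the matching upper bound, I would use Theorem \ref{intro:expecteddim} to guarantee that $M$ parametrizes a dominant family of free curves, so by \cite[Corollary 3.5.4]{Kollar} the evaluation $\mathrm{ev}: M^{(1),\circ} \to X$ on the open locus of pointed free curves is flat of relative dimension $d-2$. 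Projecting $N \to \overline{M}_{0,1}(X,1)$, a general pointed line $(\ell,q)$ has $q$ lying in the line-swept surface $Z \subset X$ and in the flat locus of $\mathrm{ev}$ (since $Z$ is a divisor and free curves dominate $X$), so the generic fiber of this projection has dimension $d-2$. This bounds every component that dominates $\overline{M}_{0,1}(X,1)$ by $2 + (d-2) = d$, and a routine semi-continuity check, combined with the fact that $M^{(1)} \setminus M^{(1),\circ}$ has strictly smaller dimension than $M^{(1)}$, rules out any hypothetical component of dimension exceeding $d$ sitting entirely over the non-flat locus.

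For the image codimension in $M$, I would factor the natural map as $N \to M \times F_{1} \to M$, where the first arrow records the underlying curve and underlying line, forgetting both markings. For a generic pair $(C,\ell) \in M \times F_{1}$ in the incidence locus $\{C \cap \ell \neq \emptyset\}$, the intersection $C \cap \ell$ consists of a single point: since $C$ is a free curve not contained in $Z$, the set $C \cap Z$ is zero-dimensional, and a standard transversality argument identifies the intersection with $\ell$ as a single reduced point. Hence $N \to M \times F_{1}$ is generically finite onto its image, which therefore has dimension $d$ in $M \times F_{1}$; projecting further to $M$ collapses at most the $1$-dimensional factor $F_{1}$, so the image of $N$ in $M$ has dimension at least $d-1$, i.e.\ codimension at most $1$.

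The main technical obstacle is ruling out spurious components of $N$ carrying excess dimension, which could potentially arise either over the non-flat locus of $\mathrm{ev}: M^{(1)} \to X$ or over loci in $Z$ where a positive-dimensional family of lines passes through a single point. Both possibilities are controlled by the generality of $X$ in moduli, which forces $F_{1}$ to be a smooth irreducible curve and ensures that no point of $X$ lies on infinitely many lines.
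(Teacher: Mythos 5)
Your proposal takes a different route from the paper, and while it contains the right ingredients, there are two genuine gaps.

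The paper's proof is cleaner and more robust. It observes that the second evaluation $\overline{M}_{0,1}(X,1) \to X$ factors through the line-swept surface $Z$, so every component $N$ is in fact a component of $V_i \times_Z \overline{M}_{0,1}(X,1)$, where $V_i$ is a component of the Cartier divisor $s^{-1}(Z) \subset M^{(1)}$ (hence pure of dimension $d$). Since $\overline{M}_{0,1}(X,1) \to Z$ is finite by \cite[Lemma 2.1.8]{KPS16}, the base-changed map $V_i \times_Z \overline{M}_{0,1}(X,1) \to V_i$ is finite, giving $\dim N \leq d$ for \emph{every} component without any case analysis; combined with your (correct) diagonal lower bound this yields equality. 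The image codimension then follows because the forgetful map $M^{(1)} \to M$ is flat with one-dimensional fibers and $N$ surjects onto $V_i$.

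Your upper bound argument has a real gap. You bound components that dominate $\overline{M}_{0,1}(X,1)$ using flatness on the free locus, and then appeal to ``a routine semi-continuity check'' to exclude excess-dimensional components sitting entirely over the non-free locus $W = M^{(1)} \setminus M^{(1),\circ}$. But semi-continuity of fiber dimension goes the wrong way here: fibers can only jump \emph{up} in the limit, so nothing prevents $\mathrm{ev}^{-1}(q) \cap W$ from being large for $q$ in $Z$. The fact that $\dim W \leq d$ does not by itself control $\dim(W \times_X \overline{M}_{0,1}(X,1))$, since that fiber product depends on how $W$ maps to $Z$. The ingredient that actually closes this gap is precisely the finiteness of $\overline{M}_{0,1}(X,1) \to Z$: it forces $W \times_Z \overline{M}_{0,1}(X,1) \to W \cap s^{-1}(Z)$ to be finite, hence of dimension at most $\dim W \leq d$. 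You mention this finiteness as an afterthought at the very end (``no point of $X$ lies on infinitely many lines''), but it needs to be invoked directly in the dimension count, not as a supplementary remark.

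The image codimension argument has a similar issue: you argue that $N \to M \times F_1$ is generically finite because a generic free curve $C$ not contained in $Z$ meets a line in a single reduced point. But this presupposes that a generic point of $N$ parametrizes a free curve not contained in $Z$, which is not known a priori for an arbitrary component $N$ — exactly the components you failed to control in the dimension bound could parametrize non-free curves. The paper sidesteps this entirely: once $\dim N = d$ is established for every component, the flatness of the forgetful map $M^{(1)} \to M$ immediately gives that the image of $V_i$ (equivalently, of $N$) in $M$ has dimension $\geq d-1$, with no transversality or genericity hypothesis needed.
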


\begin{proof}
Since $X$ is general in its moduli $\overline{M}_{0,0}(X, 1)$ is a smooth curve. 
Let $Z$ be the surface swept out by lines. Then the evaluation map $\overline{M}_{0,1}(X, 1) \rightarrow Z$ is finite because of \cite[Lemma 2.1.8]{KPS16}. (Note that a general quartic threefold does not contain a cone.)  To analyze the other evaluation map $s: M^{(1)} \rightarrow X$ first consider the pullback $V = s^{-1}(Z)$ which has dimension $d$.
Let $V = \cup_i V_i$ be the irreducible decomposition.
Note that $N$ is a component of $V_i \times_X \overline{M}_{0,1}(X, 1)$ for some $i$, and since the evaluation map from $\overline{M}_{0,1}(X,1)$ is finite every component of this product has the expected dimension.
Since the forgetful map $M^{(1)} \rightarrow M$ is flat, the image of $V_{i}$ in $M$ has codimension at most $1$.  Since $N$ dominates $V_{i}$ our assertion follows.
\end{proof}

\section{Low degree curves} \label{sec: lowdegreecurves}

In this section we collect several results describing low degree rational curves on general Fano threefolds of Picard rank $1$ and index $1$.  Our main goal is to show that in degrees $3$ and $4$ there is only one component of $\Mor(\mathbb{P}^{1},X)$ parametrizing maps that are birational onto their image.

\subsection{Conic curves}

We first recall the behavior of conics on general Fano threefolds of Picard rank $1$ and index $1$.

\begin{theo}
\label{theo: fanoschemeofconics}
Let $X$ be a smooth Fano threefold of Picard rank $1$ and index $1$ such that $-K_{X}$ is very ample and $X$ has degree $H^{3} \geq 4$.
We assume that when $4 \leq H^3 \leq 10$, $X$ is general in its moduli.
Then there is a unique dominant component $\overline{M}_{0,0}(X, 2)$ which is birational to the Fano surface $F(X)$ of conics,
and this component generically parametrizes a smooth free conic in $X$.
Moreover $F(X)$ is smooth.
\end{theo}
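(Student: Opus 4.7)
The plan is to deduce the theorem by combining structural results on the Fano scheme $F(X)$ of conics from the literature with a short dimension count that excludes other dominant components of $\overline{M}_{0,0}(X,2)$. First, using the classification in Theorem~\ref{theo: classification}, I would cite case-by-case that for each smooth Fano threefold on our list $F(X)$ is a smooth irreducible surface; the generality hypothesis for $H^{3}\leq 10$ enters only here, to rule out special members whose Fano surface acquires singularities or extra components. The low-degree complete intersections are treated in the classical literature surveyed in \cite{iskov}, while the sections of homogeneous varieties are covered by \cite{KPS16}.

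Second, I would construct the birational identification $F(X)\dashrightarrow M_{\mathrm{conic}}$, where $M_{\mathrm{conic}}$ is the candidate dominant component. Over the open subset $F(X)^{\circ}\subset F(X)$ parametrizing smooth (hence irreducible) conics, each conic $C\cong\mathbb{P}^{1}$ gives a closed immersion $\mathbb{P}^{1}\hookrightarrow X$ unique up to $\mathrm{PGL}_{2}$; this yields an injection $F(X)^{\circ}\hookrightarrow\overline{M}_{0,0}(X,2)$ whose image closure $M_{\mathrm{conic}}$ is a two-dimensional irreducible component. Density of $F(X)^{\circ}$ in $F(X)$ follows from the fact that the locus of reducible conics, dominated by pairs of intersecting lines, is proper in $F(X)$ because lines on $X$ form only a one-dimensional family. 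Dominance of the evaluation map will follow from dimension reasons: the universal family $\mathcal{U}\to F(X)$ is a $\mathbb{P}^{1}$-bundle of total dimension $3=\dim X$, and its image cannot be a proper subvariety since $X$ has Picard rank one.

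Third, I would verify that a general member of $M_{\mathrm{conic}}$ is free. The evaluation map $\mathcal{U}\to X$ is a dominant morphism between irreducible threefolds, hence generically \'etale; for general $(C,p)\in \mathcal{U}$ the normal bundle $N_{C/X}$ admits a section not vanishing at $p$. Combined with $\deg N_{C/X}=-K_{X}\cdot C + 2g(C)-2 = 0$ from adjunction, this forces $N_{C/X}\cong\mathcal{O}_{\mathbb{P}^{1}}^{\oplus 2}$, so $C$ is free. Smoothness of $F(X)$ at such a $C$ is automatic from $H^{1}(C,N_{C/X})=0$, while smoothness at the remaining points of $F(X)$ is part of the inputs cited in the first step.

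Fourth, I would verify uniqueness of the dominant component by classifying all other degree-two stable maps. Any stable map $\mathbb{P}^{1}\to X$ of degree $2$ that is not birational onto a smooth conic is either a double cover of a line or has reducible domain mapped to a union of two lines; in either case the image lies in the surface swept out by the lines of $X$, which is a proper closed subvariety, so the corresponding component is not dominant. The main obstacle in this approach is the first step: marshalling the smoothness and irreducibility statements for $F(X)$ across every case of the classification is technical and relies on detailed existing work rather than arguments internal to this paper; everything subsequent is a routine application of deformation theory and dimension counts.
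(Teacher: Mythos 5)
Your proposal follows essentially the same strategy as the paper: rely on the literature for irreducibility and smoothness of $F(X)$, identify the space of smooth conics with an open subset of a component of $\overline{M}_{0,0}(X,2)$, and rule out other dominant components by observing that lines (and hence unions of lines or double covers of lines) lie in a proper subvariety. The paper's proof is considerably terser but identical in substance, citing \cite{KPS16} for $H^{3}\geq 12$ and \cite{IM07} for $4\leq H^{3}\leq 10$; you should be aware that the generality hypothesis is tied specifically to \cite{IM07}, not to \cite{iskov}, so your attribution of the low-degree cases is slightly off.

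One small but genuine imprecision in your freeness argument: you conclude $N_{C/X}\cong\mathcal{O}^{\oplus 2}$ from the existence of a section of $N_{C/X}$ not vanishing at $p$. That is too weak — the bundle $\mathcal{O}(1)\oplus\mathcal{O}(-1)$ also admits sections not vanishing at $p$, yet it is not globally generated. What generic smoothness of the evaluation map $\mathcal{U}\to X$ actually gives you is surjectivity of the evaluation $H^{0}(C,N_{C/X})\to N_{C/X}|_{p}$ for general $(C,p)$. For a rank-$2$ degree-$0$ bundle on $\mathbb{P}^{1}$ this surjectivity does force $N_{C/X}\cong\mathcal{O}^{\oplus 2}$, so the conclusion is correct once the statement is fixed; alternatively one may simply invoke the standard fact that a general member of a dominant family of rational curves on a smooth variety is free.
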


\begin{proof}
Let $M$ be a dominant component of $\overline{M}_{0,0}(X, 2)$.
Since lines cannot cover $X$, a general curve on $M$ is a birational free stable map from $\mathbb P^1$ to a conic. This implies that $M$ is birational to a component of $F(X)$.
The irreducibility and smoothness of $F(X)$ can be found in \cite{KPS16} for $H^3\geq 12$ and in \cite{IM07} for $4\leq H^3 \leq 10$.
\end{proof}

\begin{lemm}
\label{lemm: conics}
Let $X$ be a smooth Fano threefold of Picard rank $1$ and index $1$ such that $-K_{X}$ is very ample and $X$ has degree $4 \leq H^{3} \leq 8$.
Let $F$ be the Fano scheme of conics in $X$. Then the Abel-Jacobi map from $F$ to the intermediate Jacobian $\mathrm{IJ}(X)$ is generically finite onto the image.
\end{lemm}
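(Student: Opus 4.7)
The plan is to argue by contradiction, combining the classification of surfaces in $X$ with high $a$-invariant (Theorem \ref{theo: weaklybalanced for Fano 3-folds}) with infinitesimal Hodge theory for the Abel-Jacobi map. Suppose $\phi: F \to \mathrm{IJ}(X)$ is not generically finite onto its image. Since $F$ is an irreducible smooth surface by Theorem \ref{theo: fanoschemeofconics}, this would force $\dim \phi(F) \leq 1$, so one can extract an irreducible complete curve $B \subset F$ on which $\phi$ is constant.

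Restricting the universal conic to $B$ gives a $\mathbb{P}^1$-bundle $\pi_B: \mathcal{C}_B \to B$, and since the conics in the family are pairwise distinct, the evaluation $s_B: \mathcal{C}_B \to X$ has image an irreducible surface $S \subset X$ endowed with an algebraic pencil of conics. For a general choice of $B$ the surface $S$ is not contained in the line-swept locus $Z$ (a general member of $F$ is an irreducible smooth conic by Theorem \ref{theo: fanoschemeofconics}), so Theorem \ref{theo: weaklybalanced for Fano 3-folds} applies and forces $a(S,H)=1$ with $(S,H)$ not adjoint rigid; in particular $S$ is fibered by conics over a curve.

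The main obstacle is to extract the actual contradiction from the vanishing of $\phi$ along $B$. At the infinitesimal level, the differential $d\phi_{[C]}: H^0(C,N_{C/X}) \to H^{1,2}(X)$ is given by Griffiths' cup product with the Kodaira--Spencer class of the family, and its vanishing along all tangent directions of $B$ places strong Hodge-theoretic restrictions on how $\mathcal{C}_B$ sits inside $X$: pullbacks of classes in $H^{2,1}(X)$ must be algebraically trivial on $\mathcal{C}_B$ in a way that respects the conic fibration on $S$. To convert this into a contradiction, I would enumerate the possible surfaces $S \in |nH|$ produced by Theorem \ref{theo: weaklybalanced for Fano 3-folds}, using the explicit projective description of $X$ in Theorem \ref{theo: classification} together with the normality and singularity analysis of Section \ref{sec: hyperplane}, and verify for each case that the constant-Abel--Jacobi-image condition on the conic pencil is incompatible with $X$ being smooth and with $\dim \mathrm{IJ}(X)$ being as large as it is in our degree range.

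A possibly cleaner route, which I would attempt first, is to invoke directly the existing literature on Abel-Jacobi maps for Fano schemes of conics on low-genus prime Fano threefolds (work of Logachev, Iliev--Markushevich, and others), which in these specific cases yields the generic finiteness of $\phi$ as part of a more precise statement identifying the image with (an open subset of) a theta-like subvariety of $\mathrm{IJ}(X)$.
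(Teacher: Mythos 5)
Your proposal contains a genuine gap, which you yourself flag as ``the main obstacle.'' The reduction you carry out --- a positive-dimensional fiber $B\subset F$ of the Abel--Jacobi map sweeps out a surface $S\subset X$ ruled by a pencil of conics, so Theorem~\ref{theo: weaklybalanced for Fano 3-folds} forces $a(S,H)=1$ with $(S,H)$ not adjoint rigid --- is correct as far as it goes, but it lands exactly where the paper's structure theory already says such surfaces \emph{do} exist. Nothing about $a(S,H)=1$ is contradictory; the contradiction must come entirely from the Hodge-theoretic constraint that $d\phi$ vanishes along $B$, and you never extract it. The plan to ``enumerate the possible surfaces $S\in|nH|$ \ldots\ and verify for each case that the constant-Abel--Jacobi-image condition on the conic pencil is incompatible'' is a statement of intent, not an argument. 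Griffiths' cup-product description of $d\phi$ alone will not obviously produce the case-by-case incompatibility you need; to make this route work you would have to supply the actual infinitesimal computation, and as written the sketch cannot be checked.

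The paper proceeds quite differently and more cleanly. For $H^{3}=8$ it uses Iliev--Markushevich's identification $\mathrm{Alb}(F)\cong\mathrm{IJ}(X)$: if $\phi$ were not generically finite it would factor through a curve $C$ of positive genus, so the composition $\mathrm{Alb}(F)\twoheadrightarrow\mathrm{Jac}(C)\to\mathrm{IJ}(X)$ would be an isomorphism, forcing $\mathrm{IJ}(X)$ to be a Jacobian, which is excluded by \cite[Theorem~8.1.7]{iskov}. For $H^{3}=4$ and $6$ it cites Letizia. This is a purely abelian-variety argument and entirely sidesteps the $a$-invariant classification of surfaces, which is irrelevant to this lemma. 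Your fallback suggestion --- ``invoke directly the existing literature (Logachev, Iliev--Markushevich, \ldots)'' --- is aligned in spirit with what the paper does, but you leave it unsubstantiated; the content of the lemma is precisely to pin down which results and how they combine, and that is what is missing from your proposal.
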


\begin{proof}
Suppose that $H^3=8$ so that $X$ is a complete intersection of three quadrics.
It is shown in \cite{IM07} that the Albanese variety $\mathrm{Alb}(F)$ of $F$ and the intermediate Jacobian $\mathrm{IJ}(X)$ of $X$ are isomorphic to each other. However, if the Abel-Jacobi mapping from $F$ to $\mathrm{IJ}(X)$ is not generically finite onto the image, then it must factor through a smooth curve $C$ of higher genus. Then we have the following factorization:
\[
\mathrm{Alb}(F) \twoheadrightarrow \mathrm{Jac}(C) \rightarrow \mathrm{IJ}(X)
\]
and the composition must be an isomorphism. This implies that $\mathrm{IJ}(X)$ is isomorphic to the Jacobian of $C$, but this is impossible. (See \cite[Theorem 8.1.7]{iskov})

Suppose that $H^3 = 4$. In this case, $X$ is a smooth quartic hypersurface in $\mathbb P^4$.
Our assertion follows from \cite[Remark after the proof of Lemma 1]{Let84}.

Suppose that $H^3 =6$. In this case, $X$ is a smooth complete intersection of a quadric and cubic.
The same proof for $H^3 = 4$ works just fine.
\end{proof}

\subsection{Cubic curves}

Our first goal is to prove that the generic cubic curve on $X$ is a twisted cubic.  

\begin{lemm}
\label{lemm: planecubics2}
Let $X$ be a smooth Fano threefold of Picard rank $1$ and index $1$ such that $-K_{X}$ is very ample and $X$ has degree $H^3 \geq 4$. Let $M$ be a component of $\overline{M}_{0,0}(X, 3)$ parametrizing a dominant family of stable maps whose generic member has irreducible domain and maps birationally onto its image.  Then a general member of $M$ is a stable map to a twisted cubic (and not a rational plane cubic).
\end{lemm}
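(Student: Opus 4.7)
The plan is to bound the dimension of the family of rational plane cubics on $X$ and compare with the expected dimension of a dominant component. Since $M$ is a dominant component generically parametrizing birational stable maps to irreducible cubics, standard deformation theory gives $\dim M = -K_X \cdot [C] + \dim X - 3 = 3$, and a general member is a free curve. The goal is to show that rational plane cubics in $X$ form a family of dimension at most $2$, yielding a contradiction.

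First I would observe that each rational plane cubic $C \subset X$ is non-degenerate in its linear span $P_C = \mathrm{span}(C) \cong \mathbb{P}^2 \subset \mathbb{P}^{g+1}$, giving a rational map $\sigma \colon M \dashrightarrow \mathrm{Gr}(3, g+2)$ sending $[f\colon \mathbb{P}^1 \to C]$ to $P_C$. If a general member of $M$ were a rational plane cubic, then $\dim \overline{\sigma(M)} + \dim \sigma^{-1}(P) = 3$ for generic $P$ in the image. I would then analyze the fibers: over a plane $P$ with $P \cap X$ one-dimensional of degree $e_P$ in $P$, rational cubics contained in $P \cap X$ form a family of dimension at most $\binom{e_P - 1}{2} - 1$ (parametrized by residual curves of degree $e_P - 3$ in $|\mathcal{O}_{\mathbb{P}^2}(e_P - 3)|$), cut down further by the codimension-one condition of rationality.

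The core of the argument is to bound $\dim \overline{\sigma(M)}$ by an explicit case analysis using the classification of Theorem~\ref{theo: classification}. For instance, when $H^3 = 4$ and $X \subset \mathbb{P}^4$ is a quartic, every plane $P \subset \mathbb{P}^4$ meets $X$ in a quartic curve, and the existence of a cubic component forces $P \cap X = C \cup L$ for a line $L \subset X$; since the Fano scheme of lines is at most one-dimensional and planes through a fixed line form a $\mathbb{P}^2$, the resulting family of plane cubics has dimension at most $2$. For $H^3 = 6$ and $X = Q \cap F \subset \mathbb{P}^5$, a plane cubic forces $P \subset Q$ with the cubic cut out by $F|_P$, so the family is parametrized by the spinor variety of planes in $Q$ modulo the codimension condition of rationality. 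Similar analyses in the remaining genera use the explicit descriptions of $X$ as a complete intersection or section of a homogeneous variety, together with the fact that $X$ contains no plane (by Theorems~\ref{theo: weaklybalanced for Fano 3-folds} and~\ref{theo: a-value=3/4}).

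The main obstacle is performing this bound uniformly across all degrees $H^3$ in the statement, since each case requires its own model-specific analysis; however, in each case the plane cubics in $X$ are controlled either by lines of $X$ (when $X$ is a hypersurface or low-codimension complete intersection) or by the family of planes in a defining quadratic section, both of which are of dimension at most $2$ after imposing the rationality condition. Combining this with the fiber bound gives $\dim M \leq 2$, contradicting $\dim M = 3$.
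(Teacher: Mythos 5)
Your approach — bounding the dimension of plane cubics via the spanning-plane map $\sigma\colon M \dashrightarrow \mathrm{Gr}(3, g+2)$ and a case-by-case analysis of the defining equations of $X$ — is genuinely different from the paper's proof. The paper instead considers the incidence correspondence $I \subset M \times (\mathbb{P}^{g+1})^*$ with \emph{hyperplanes} rather than $2$-planes, so that $\dim I = g+1$ (each plane cubic lies in a $\mathbb{P}^{g-2}$ of hyperplanes), and stratifies by the fiber dimension $d$ of the projection to $(\mathbb{P}^{g+1})^*$: the case $d=0$ is ruled out by Noether--Lefschetz, $d=1$ by normality/canonicity of hyperplane sections plus adjunction, and $d=2$ by computing $a(S,H)=1$ and invoking Theorem~\ref{theo: weaklybalanced for Fano 3-folds}. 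This is uniform in the genus $g$ and never requires unpacking the model of $X$; yours would need to be rerun separately for each entry of Theorem~\ref{theo: classification}, including the sections of $\mathrm{LGr}(3,6)$, $\mathrm{OGr}_+(5,10)$, and $G_2/P$, where a concrete description of the $2$-planes meeting $X$ in a cubic is much less transparent than in the complete intersection cases you discuss.

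There are two substantive gaps in your argument. First, the fiber analysis of $\sigma$ is off: for a plane $P$ with $P\cap X$ one-dimensional, the fiber $\sigma^{-1}(P)$ consists of cubics that are \emph{components} of the fixed curve $P\cap X$, so it is finite, not of dimension $\binom{e_P-1}{2}-1$; the curve $P\cap X$ is not free to vary in $|\mathcal{O}_{\mathbb{P}^2}(e_P)|$ once $P$ is fixed. This error is harmless (it is an overcount), but it signals the real issue: once you correctly conclude $\dim M = \dim\overline{\sigma(M)}$, the entire weight of the proof falls on the claim that rationality of the residual cubic cuts the base by one. Second, and more seriously, the assertion that ``the codimension-one condition of rationality'' reduces the $3$-dimensional family of pairs $(L,P)$ (or planes in $Q$) to a $2$-dimensional family of rational plane cubics is exactly what needs to be proved and is never justified. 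A priori, the restricted pencil of cubics $\{P\cap X - L\}_{P\supset L}$ could lie entirely in the discriminant locus, in which case your dimension count gives exactly $3$ and no contradiction. The paper circumvents this precisely by replacing ``is the generic residual cubic elliptic?'' with the robust $a$-invariant computation in the $d=2$ case. To make your route rigorous, you would need a separate argument (e.g., via the second fundamental form, or a monodromy/Lefschetz pencil argument on the residual cubics) showing the residual curve is generically smooth, and you would need it in each of the genera $4 \le g \le 12$.
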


\begin{proof}
It suffices to show that $M$ does not generically parametrize rational cubic curves contained in a $\mathbb{P}^{2}$.  We will assume otherwise and deduce a contradiction.

Consider the incidence correspondence $I \subset M \times (\mathbb P^{g+1})^*$ which parametrizes pairs $(f: C \rightarrow X, H)$ such that $f(C) \subset H$.  By our assumption $I$ has dimension $g+1$.  Let $p_{2}: I \to \mathbb{P}^{g+1}$ denote the projection map.  We will separate into cases based on the dimension $d$ of the general fiber of $p_{2}$.

\begin{itemize}
\item[$d=0$.] Then $p_{2}$ is dominant.  In particular, a very general hyperplane section of $X$ should contain an irreducible cubic curve.  However, this would violate the Noether-Lefschetz theorem on the Picard group for hyperplane sections of a smooth threefold.
See \cite{Mou67}.

\item[$d=1$.] In this case, a general hyperplane $H$ in the image of $p_{2}$ will contain a one-parameter family of rational curves.  Note  that outside of a codimension $2$ subset of the parameter space hyperplane sections of $X$ have canonical singularities.  Applying adjunction we see that outside of a codimension $2$ subset in the parameter space a hyperplane section cannot be uniruled, giving a contradiction.

\item[$d=2$.]  
In this case, a general hyperplane $S$ in the image of $p_{2}$ will contain a two-parameter family of rational curves.  Let $\phi: \widetilde{S} \to S$ be the resolution of such a surface.  By taking strict transforms, we find a $2$ dimensional family of rational curves $C$ covering $\widetilde{S}$. Since these curves form a dominant family on the smooth surface $\widetilde{S}$ the corresponding component of the moduli space of rational curves on $\widetilde{S}$ has the expected dimension.  Since by assumption this dimension is exactly $2$, we have
\begin{equation*}
-K_{\widetilde{S}} \cdot C - 1 = 2.
\end{equation*}
Since the class of $C$ is nef, we have
\begin{equation*}
0 \leq a(S,\phi^{*}H)\phi^{*}H \cdot C + K_{S} \cdot C =  3a(S,\phi^{*}H) - 3.
\end{equation*}
Altogether this implies that $a(S, H) =1$,
and the Iitaka fibration of $\phi^*H+ K_{\widetilde{S}}$ is fibered by conics.
In this case $C$ should have zero intersection against these conics, a contradiction.

\end{itemize}
\end{proof}

\begin{theo}
\label{theo: cubicsirreducible}
Let $X$ be a smooth Fano threefold of Picard rank $1$ and index $1$ such that $-K_{X}$ is very ample and $X$ has degree $H^3 \geq 4$. Assume that $X$ is general in its moduli. Then there is a unique component $M$ of $\overline{M}_{0,0}(X, 3)$ parametrizing a dominant family of stable maps whose generic member has irreducible domain and maps birationally onto its image.
\end{theo}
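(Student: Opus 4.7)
The plan is to reduce by Lemma~\ref{lemm: planecubics2} to proving that twisted cubics on $X$ form a unique dominant component of $\overline{M}_{0,0}(X, 3)$. I would first record that by Theorem~\ref{theo: weaklybalanced for Fano 3-folds} combined with Theorem~\ref{intro:expecteddim}, every component $M$ of the desired type has the expected dimension $3$ and a general member is a free map, at which $M$ is smooth. By Theorem~\ref{theo: irrfibers}, the fibers of the evaluation map $M^{(1)} \to X$ are irreducible of dimension $1$ over a general point.

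My strategy is a degeneration-and-smoothing argument. Let $\overline{M}$ denote the closure of $M$ in $\overline{M}_{0,0}(X, 3)$; this is a projective $3$-fold, and its boundary (the complement of the locus of irreducible stable maps) is at least $2$-dimensional. A degree-$3$ stable map from a reducible arithmetic-genus-$0$ domain must be of one of the following shapes: (a) a conic glued to a line at a single node, (b) a chain, $Y$, or triangle configuration of three lines, or (c) a stable map containing a contracted rational component. For $X$ general in its moduli the Fano scheme of lines is a smooth irreducible curve containing no cone (cf.\ the proof of Lemma~\ref{lemm: free+lines}), and a straightforward incidence count then shows that configurations (b) and (c) each sweep out a locus of dimension at most $1$ in $\overline{M}_{0,0}(X, 3)$. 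Hence the codimension-$1$ boundary of $\overline{M}$ is supported on the locus of type-(a) stable maps in which both branches are free.

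Conversely, I would show that the type-(a) locus is itself irreducible. By Theorem~\ref{theo: fanoschemeofconics} the family of free conics is irreducible of dimension $2$, and applying Lemma~\ref{lemm: free+lines} with the role of $M$ played by the conic component shows that the fiber product parametrizing pairs (free conic with marked point, line through that point) has the expected dimension $2$ and is irreducible, by the same fiber-product/flatness argument underlying Corollary~\ref{coro: irreducibility}. Standard smoothing theory for a stable map with a single node and both branches free then produces a one-parameter smoothing to a free twisted cubic, and this determines a single component $N$ of $\overline{M}_{0,0}(X, 3)$. Since every dominant component $M$ of the desired type contains a codimension-$1$ boundary locus of type (a) by the previous paragraph, and is smooth at such free nodal boundary points, $M$ must coincide with $N$.

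The main obstacle will be verifying uniformly across the classification of Theorem~\ref{theo: classification} that configurations (b) and (c) contribute in codimension at least $2$ in $\overline{M}$. For this I would invoke the explicit description of the Fano scheme of lines in each case: it is a smooth irreducible curve and the incidence locus of pairs of meeting lines is itself of dimension at most $1$, so chains, $Y$'s, and triangles of three lines parametrize a locus of dimension strictly less than $2$. The contracted case decomposes into lower-degree gluings, and the same bookkeeping applies.
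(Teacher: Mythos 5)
Your degeneration-and-smoothing strategy is a genuinely different route from the paper's, which instead parametrizes twisted cubics directly in the ambient $\mathbb{P}^{g+1}$ (or, for $10 \leq H^3 \leq 18$, in the homogeneous space $\mathbb{G}$), pushes forward the bundle $\mathcal{E} = \bigoplus \mathcal{O}(a_i)$ along the universal curve, verifies global generation via the Eagon--Northcott complex, and concludes irreducibility of the zero locus of a general section; the case $H^3 = 22$ is handled by citation. Your reduction via Lemma~\ref{lemm: planecubics2} and the use of bend-and-break to produce a codimension-one boundary are reasonable in outline, but the argument has a genuine gap at its crux.

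The gap is the irreducibility of the type-(a) boundary locus. You invoke Lemma~\ref{lemm: free+lines} with $M$ the conic component and ``the same fiber-product/flatness argument underlying Corollary~\ref{coro: irreducibility}.'' But Corollary~\ref{coro: irreducibility} explicitly requires $d_1 \geq 3$; its proof runs through Theorem~\ref{theo: irrfibers}, which also assumes $d \geq 3$ and fails for $d = 2$ precisely because the universal conic $\mathcal{U} \to X$ \emph{is} the $a$-cover singled out in Theorem~\ref{theo: a-covers}. Moreover the mechanism of Lemma~\ref{lemm: glued components} needs both evaluation maps to be flat and dominant onto the same irreducible base, whereas $\overline{M}_{0,1}(X,1) \to X$ only hits the ruled surface $Z$; the proof of Lemma~\ref{lemm: free+lines} correspondingly allows $s^{-1}(Z) \subset M^{(1)}$ to have several irreducible components $V_i$ and asserts only the expected dimension, not uniqueness. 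What you actually need is that the finite cover of the Fano surface of conics by configurations (conic, incident line) is connected, i.e.\ that monodromy acts transitively on lines meeting a general conic; none of the cited results supplies this. Without it, two dominant components $M, M'$ could a priori contain disjoint codimension-one type-(a) divisors, and the smoothing step cannot force $M = M'$. A smaller imprecision: the line branch of a type-(a) configuration is not free, since for index $1$ a general line has normal bundle $\mathcal{O} \oplus \mathcal{O}(-1)$; the smoothing still works because $H^1$ vanishes (as used in the proof of Theorem~\ref{theo: movbendandbreak}), but ``both branches free'' should be replaced by a direct unobstructedness statement, and your taxonomy should also separate the degree-$2$ multiple cover of a line from the irreducible conic case.
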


\begin{proof}
By Lemma~\ref{lemm: planecubics2} it suffices to prove that the locus $M^{\circ}$ parametrizing twisted cubics is irreducible.  When $H^3 = 22$, this is settled in \cite{KS04}. So we may assume that $4 \leq H^3 \leq 18$. We first explain the argument in the case $4 \leq H^{3} \leq 8$.  Let $N$ denote the irreducible space of twisted cubics in $\mathbb{P}^{g+1}$ and let $\mathcal{C}$ denote the universal curve over $N$.  Consider the diagram
\begin{equation*}
\xymatrix{\mathcal{C} \ar[r]^{f}\ar[d]_{\pi} &  \mathbb{P}^{g+1} \\
N & }
\end{equation*}
Suppose that $X$ is a complete intersection given by hypersurfaces of degree $a_{1}, \ldots, a_{r}$.  Consider $\pi_{*}f^{*}\mathcal{E}$ where $\mathcal{E} = \mathcal{O}(a_{1}) \oplus \ldots \oplus \mathcal{O}(a_{r})$.  We claim that this is a globally generated vector bundle.  The fact that $\pi_{*}f^{*}\mathcal{E}$ is a vector bundle follows from the computation $H^{1}(C,\mathcal{E}|_{C}) = 0$.  The global generation follows from the fact that the map $H^{0}(\mathbb{P}^{n},\mathcal{E}) \to H^{0}(C,\mathcal{E})$ is surjective (which can be deduced easily using the Eagon-Northcott resolution of the ideal sheaf of $C$).  Since $f$ has connected fibers we can identify sections of $f^{*}\mathcal{E}$ with sections of $\mathcal{E}$.  Thus $M^{\circ}$ can be identified with the vanishing locus of a general section of the globally generated vector bundle $\pi_{*}f^{*}\mathcal{E}$ and we deduce that it is irreducible.

The argument in the case $10 \leq H^{3} \leq 18$ is essentially the same.  The only change is that we work with a homogeneous space $\mathbb{G}$ in the place of $\mathbb{P}^{g+1}$.  In particular we must prove a surjection $H^{0}(\mathbb{G},\mathcal{E}) \to H^{0}(C,\mathcal{E})$ for the homogeneous space $\mathbb{G}$.  However, we already have a surjection on sections from the ambient projective space and this map factors through $\mathbb{G}$.  We also need to know that $f$ has connected fibers, but this is still true for a homogeneous space.
\end{proof}

\subsection{Quartic curves}

The argument for quartic curves is very similar to the argument for cubics.

\begin{lemm}
\label{lemm: degeneratequartics}
Let $X$ be a smooth Fano threefold of Picard rank $1$ and index $1$ such that $-K_{X}$ is very ample. When $4 \leq H^3 \leq 8$, we further assume that $X$ is general.  Let $M$ be a component of $\overline{M}_{0,0}(X, 4)$ parametrizing a dominant family of stable maps whose generic member has irreducible domain and maps birationally onto its image.  Then a general member of $M$ is a stable map to a rational normal curve of degree $4$.
\end{lemm}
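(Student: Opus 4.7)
The proof will follow the strategy of Lemma~\ref{lemm: planecubics2}. Assume for contradiction that a general $[f]\in M$ has image $f(C)$ spanning a linear subspace $\Lambda\cong \mathbb P^{k}$ with $k\leq 3$ (the case $k=1$ is excluded by the birationality hypothesis). Form the incidence correspondence
\[
I=\{([f], H)\in M\times (\mathbb P^{g+1})^{*} : f(C)\subset H\}.
\]
The first projection $I\to M$ has fibers of dimension $g-k$, so $\dim I = 4+g-k \geq g+1$. Let $d$ denote the dimension of a general fiber of the second projection $p_{2}: I \to (\mathbb P^{g+1})^{*}$. Since $M$ dominates $X$, the value $d=4$ is impossible (it would force every curve of $M$ to lie in a single hyperplane section).

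For $d=0$ (which forces $k=3$), the map $p_{2}$ is dominant. A very general hyperplane section $S=X\cap H$ must then contain an irreducible rational quartic, which by the Noether--Lefschetz theorem \cite{Mou67} has class $mH|_{S}$ with $mH^{3}=4$. When $H^{3}\geq 5$ no positive integer $m$ satisfies this. When $H^{3}=4$, the forced value $m=1$ identifies the quartic with a hyperplane section of $S\subset \mathbb P^{3}$, so the curve lies in the codimension-$2$ linear subspace $H\cap H'\cong \mathbb P^{2}$ for an auxiliary hyperplane $H'$ of $\mathbb P^{4}$, contradicting $k=3$.

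For $d\geq 1$, the image $p_{2}(I)$ has codimension $k+d-3 \leq 3$ in $(\mathbb P^{g+1})^{*}$. By Lemmas~\ref{lemm: noncanonicallocus_lowdegree} and \ref{lemm: normality for H high degree} the non-canonical (respectively non-normal) locus has codimension $\geq 4$ (resp.\ $\geq 3$), so in all but a boundary subcase we may choose $H \in p_{2}(I)$ outside the bad locus. For such $H$ the surface $S = X \cap H$ has only canonical singularities, so a crepant resolution $\widetilde S$ is a K3 surface. The case $d=1$ is immediate: K3 surfaces are not uniruled. For $d=2$ we mimic the cubic argument of Lemma~\ref{lemm: planecubics2} — the strict transforms of the quartics on $\widetilde S$ give a dominant family of expected dimension $-K_{\widetilde S}\cdot C - 1 = -1$ on a K3, which is impossible; equivalently, if one relaxes to $a(S, H) = 1$ the adjoint Iitaka fibration is a conic fibration, contradicting the positive intersection of a moving rational quartic with a moving conic. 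The case $d=3$ is handled analogously using the $a$-invariant spectrum of Lemma~\ref{lemm: spectrumforsurfaces} together with the surface classification of Theorem~\ref{theo: weaklybalanced for Fano 3-folds}.

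The principal technical obstacle will be the boundary subcase in degrees $H^{3}\geq 10$ with $k=d=3$, where $p_{2}(I)$ and the non-normal locus share codimension $3$. Here one must combine a direct dimension comparison with the refined $a$-invariant bounds from Sections~\ref{sec: Manininvariants} and \ref{sec: a-covers} to guarantee that $p_{2}(I)$ is not contained in the non-normal locus, so that the K3-adjunction argument still applies to a generic hyperplane section in $p_{2}(I)$.
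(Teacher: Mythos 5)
Your overall strategy mirrors the paper's, but there is a genuine gap in the $d=2$ branch. You propose to use the K3-adjunction argument ``the strict transforms of the quartics on $\widetilde S$ give a dominant family of expected dimension $-K_{\widetilde S}\cdot C - 1 = -1$ on a K3'' throughout $d\geq 1$, but this presupposes that $\widetilde S$ is a crepant resolution of a surface with canonical singularities. The only codimension bound available for the non-canonical locus is Lemma~\ref{lemm: noncanonicallocus_lowdegree}, and that lemma is proved solely for $4\leq H^3\leq 8$. For $H^3\geq 10$ the paper supplies only Lemma~\ref{lemm: normality for H high degree}, which controls the non-normal locus (codimension $\geq 3$), and normality does not imply canonicity. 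Since for $k=3$, $d=2$ the locus $p_2(I)$ has codimension exactly $2$, you cannot conclude that a general $H\in p_2(I)$ yields a canonical hyperplane section. Your parenthetical alternative ``if one relaxes to $a(S,H)=1$'' also does not close the gap: the expected-dimension count here forces only $-K_{\widetilde S}\cdot C = 3$, hence $4a(S,H)-3\geq 0$, i.e.\ $a(S,H)\geq 3/4$. Unlike the cubic case of Lemma~\ref{lemm: planecubics2}, where the analogous inequality yields $a\geq 1$ directly, here $a(S,H)=3/4$ is a live possibility and must be excluded separately.

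The paper handles exactly this: for $H^3\geq 10$ it invokes Theorem~\ref{theo: a-value=3/4} to rule out $a=3/4$ (using Lemma~\ref{lemm: normality for H high degree} to dodge the non-normal exception), and then disposes of $a=1$ not by a quick positivity observation but by a Zariski decomposition of $\phi^*H+K_{\widetilde S}$ yielding $H^2\cdot S\leq 4+K_{S'}^2\leq 12$ on a Hirzebruch model, which rules out $H^3\geq 14$ outright and then falls back on finiteness of rational curves on the Fano surface of conics (via \cite{Kuz05} and \cite{DIM12}) for $H^3=10,12$. None of this is reproduced in your sketch. You also misidentify the delicate subcase: you flag $k=d=3$, $H^3\geq 10$ as the obstacle, but the $d=3$ computation gives the cleaner bound $4a(S,H)-4\geq 0$, i.e.\ $a\geq 1$, and needs no degree split; it is $d=2$, $H^3\geq 10$ that requires the extra machinery. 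Finally, your refinement of the $d=0$, $H^3=4$ Noether--Lefschetz step (forcing the quartic to be a plane section) is a nice observation the paper elides, but the case $k=2$ is dispatched by the paper with ``the argument is much simpler'' and deserves at least a sentence if you are carrying $k\leq 3$ through.
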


\begin{proof}
It suffices to show that $M$ does not generically parametrize smooth rational quartic curves contained in a $\mathbb{P}^{3}$.  We will assume otherwise and deduce a contradiction.
For concreteness we will make the further assumption that a general stable map on $M$ spans a $\mathbb P^3$. (When it only spans a $\mathbb P^2$ the argument is much simpler.)  We also assume that the degree of $X$ is $\geq 6$; the case when $X$ has degree $4$ will be discussed at the end.

Consider the incidence correspondence $I \subset M \times (\mathbb P^{g+1})^*$ which parametrizes pairs $(f: C \rightarrow X, H)$ such that $f(C) \subset H$.  By our assumption $I$ has dimension $g+1$.  Let $p_{2}: I \to \mathbb{P}^{g+1}$ denote the projection map.  We will separate into cases based on the dimension $d$ of the general fiber of $p_{2}$.

\begin{itemize}
\item[$d=0$.] Then $p_{2}$ is dominant.  In particular, a very general hyperplane section of $X$ should contain an irreducible quartic curve.  However, this would violate the Noether-Lefschetz theorem on the Picard group for hyperplane sections of a smooth threefold.
See \cite{Mou67}.

\item[$d=1$.] In this case, a general hyperplane $H$ in the image of $p_{2}$ will contain a one-parameter family of rational curves.  Note  that outside of a codimension $2$ subset of the parameter space hyperplane sections of $X$ have canonical singularities.  Applying adjunction we see that outside of a codimension $2$ subset in the parameter space a hyperplane section cannot be uniruled, giving a contradiction.

\item[$d=2$.]  
In this case, a general hyperplane $S$ in the image of $p_{2}$ will contain a two-parameter family of rational curves.  We separate into two cases based on degree.

If $6 \leq H^{3} \leq 8$, then we can repeat the argument for $d=1$ by appealing to Lemma~\ref{lemm: noncanonicallocus_lowdegree} to ensure that the generic hyperplane section in the image of the map has canonical singularities.

If $H^{3} \geq 10$, let $\phi: \widetilde{S} \to S$ be a resolution.  By taking strict transforms, we find a $2$ dimensional family of rational curves $C$ covering $\widetilde{S}$.  Since these curves form a dominant family on the smooth surface $\widetilde{S}$ the corresponding component of the moduli space of rational curves on $\widetilde{S}$ has the expected dimension.  This implies that
\begin{equation*}
-K_{\widetilde{S}} \cdot C - 1 = 2.
\end{equation*}
Since the class of $C$ is nef, we have
\begin{equation*}
0 \leq a(\widetilde{S},\phi^{*}H)\phi^{*}H \cdot C + K_{\widetilde{S}} \cdot C = 4a(\widetilde{S},\phi^{*}H) - 3.
\end{equation*}
Thus we see that $a(\widetilde{S},\phi^{*}H) \geq 3/4$.  As proved in Theorem~\ref{theo: a-value=3/4}
this implies that either $a(\widetilde{S},\phi^{*}H) = 1$ or $10 \leq H^3 \leq 16$ and $S$ is a non-normal hyperplane. Note that the second case contradicts with Lemma~\ref{lemm: normality for H high degree}.
We show that the first case $a(S, H)=1$ is also impossible. By Theorem~\ref{theo: weaklybalanced for Fano 3-folds}, the adjoint divisor $\phi^*H + K_{\widetilde{S}}$ has Iitaka dimension $1$. 
Let $\alpha: \widetilde{S} \rightarrow S'$ be a minimal model for $\phi^*H + K_{\widetilde{S}}$. On this model we have
\[
\alpha_* \phi^* H + K_{S'} = eF
\]
where $F$ is a general fiber of the Iitaka fibration for $\alpha_* \phi^* H + K_{S'}$.
Since $\alpha_* \phi^* H + K_{S'}$ is an integral divisor, we conclude that $e$ is a positive integer.
Let
\[
\phi^*H + K_{\tilde{S}} = eF + E
\]
be the Zariski decomposition where $F$ is a general fiber of the Iitaka fibration of $\phi^*H + K_{\widetilde{S}}$.
Since $(\phi^*H + K_{\widetilde{S}}).C = 1$, this implies that we have $e = 1$.
Thus we conclude that 
\[
H^2.S \leq (\alpha_* \beta^* H)^2 = (F-K_{S'})^2 = 4 + K_{S'}^2.
\]
Since $\widetilde{S}$ is rationally connected, $S'$ is a Hirzebruch surface and thus $H^2. S \leq 12$.
This is impossible when $H^3 \geq 14$.
If $H^3 = 12$, then by \cite[Theorem 5.3]{Kuz05} the Fano surface of conics is the symmetric product of a smooth curve of genus $7$. Thus it contains only finitely many rational curves, and this is a contradiction as the base of the Iitaka fibration of $\phi^*H + K_{\widetilde{S}}$ must be $\mathbb P^1$.
If $H^3 =10$, then it follows from \cite[Corollary 7.3]{DIM12} that the Fano surface of conics only contains finitely many rational curves, a contradiction.

\item[$d = 3$.] 
Suppose that the dimension of the general fiber is $3$.
Fix a general $H \in \pi(I_i)$. By generality, there is a $3$-dimensional family of rational curves of degree $4$ on $S = H \cap X$.
Let $\phi: \widetilde{S}\rightarrow S$ be a resolution.
By taking strict transforms, we obtain a $3$ dimensional family of degree $4$ rational curves covering $\widetilde{S}$.  Since these curves form a dominant family on the smooth surface $\widetilde{S}$ the corresponding component of the moduli space of rational curves on $\widetilde{S}$ has the expected dimension.  This implies that
\[
-K_{\widetilde{S}}.C-1 =  3,
\]
where $C$ is a general member of this family.
Since deformations of $C$ cover $\widetilde{S}$, we also have
\[
0 \leq a(S, H)\phi^*H.C + K_{\widetilde{S}}.C =  4a(S, H)-4.
\]
Altogether this implies that $a(S, H) =1$,
and the Iitaka fibration of $\phi^*H+ K_{\widetilde{S}}$ is fibered by conics.
However, in this case $C$ should have zero intersection against these conics, a contradiction.
\end{itemize}
This finishes the proof when $H^{3} \geq 6$.  When $H^{3}=4$, the argument is essentially the same. Note that if a general member of $M$ is a rational normal quartic curve, then it is not contained in a hyperplane section. Thus we cannot consider the incidence correspondence as above. Instead, we may assume that a general member of $M$ is contained in $\mathbb P^3$. Then one can consider the incidence correspondence as above and deduce a contradiction.
\end{proof}

\begin{theo}
\label{theo: quarticsirreducible}
Let $X$ be a smooth Fano threefold of Picard rank $1$ and index $1$ such that $-K_{X}$ is very ample and $X$ has degree $4 \leq H^3 \leq 18$.  Assume that $X$ is general in its moduli.  Then there is a unique component $M$ of $\overline{M}_{0,0}(X, 4)$ parametrizing a dominant family of stable maps whose generic member has irreducible domain and maps birationally onto its image.
\end{theo}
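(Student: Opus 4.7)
The plan is to adapt the argument of Theorem~\ref{theo: cubicsirreducible} to the quartic setting. By Lemma~\ref{lemm: degeneratequartics}, any such component $M$ has a general member that is a stable map onto a rational normal quartic curve, so it suffices to show that the sublocus $M^{\circ} \subset M$ parametrizing rational normal quartic curves on $X$ is irreducible.

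First I would set up the universal family of rational normal quartic curves in the ambient space of $X$. Let $\mathbb{A}$ denote this ambient space, namely $\mathbb{A} = \mathbb{P}^{g+1}$ when $4 \leq H^{3} \leq 8$ and $\mathbb{A} = \mathbb{G}$ the relevant homogeneous space from Theorem~\ref{theo: classification} when $10 \leq H^{3} \leq 18$. Let $N$ denote the parameter space of rational normal quartic curves in $\mathbb{A}$. This space is irreducible: when $\mathbb{A} = \mathbb{P}^{g+1}$ it is a homogeneous space under the projective linear group; when $\mathbb{A} = \mathbb{G}$ one first chooses a linearly embedded $\mathbb{P}^{4}$ (a connected parameter of choices) and then a rational normal quartic inside it. Let $\pi \colon \mathcal{C} \to N$ be the universal curve with evaluation map $f \colon \mathcal{C} \to \mathbb{A}$, and write $X$ as the zero locus in $\mathbb{A}$ of a section of a vector bundle $\mathcal{E}$ (the direct sum of defining line bundles in the complete intersection cases, the relevant homogeneous bundle otherwise). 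Then $M^{\circ}$ is identified on an open subset of $N$ with the zero locus of the section of $\pi_{*} f^{*} \mathcal{E}$ induced by the defining section of $\mathcal{E}$.

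The key step is to show that $\pi_{*} f^{*} \mathcal{E}$ is a globally generated vector bundle on $N$; once this is established, irreducibility of $M^{\circ}$ follows from the standard fact that a general zero locus of a globally generated vector bundle on an irreducible smooth variety is either empty or irreducible of the expected dimension, and the section defining $X$ is general because $X$ is general in moduli. The vector bundle property is immediate from cohomology and base change: on each fiber $C \cong \mathbb{P}^{1}$ the restriction $f^{*} \mathcal{E}|_{C}$ is a direct sum of line bundles of positive degree (each summand $\mathcal{O}(a_{i})|_{C}$ has degree $4 a_{i}$ in the complete intersection case, and the summands of the homogeneous bundles satisfy the analogous positivity on a rational normal quartic), so $H^{1}(C, f^{*} \mathcal{E}|_{C}) = 0$. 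For global generation, since $f$ has connected fibers one has $f_{*} f^{*} \mathcal{E} = \mathcal{E}$, and the problem reduces to surjectivity of the restriction $H^{0}(\mathbb{A}, \mathcal{E}) \to H^{0}(C, \mathcal{E}|_{C})$ for every rational normal quartic $C \subset \mathbb{A}$.

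The main obstacle I expect is establishing this last surjectivity. In the complete intersection cases $4 \leq H^{3} \leq 8$ it follows from the Eagon--Northcott resolution of the ideal sheaf of a rational normal quartic $C$, exactly as in the cubic case. In the homogeneous space cases $10 \leq H^{3} \leq 18$, the corresponding surjectivity $H^{0}(\mathbb{P}^{N}, \mathcal{O}(a_{i})) \twoheadrightarrow H^{0}(C, \mathcal{O}(a_{i})|_{C})$ from the ambient projective space containing $\mathbb{G}$ still holds by Eagon--Northcott, and one must check that this surjection factors through $H^{0}(\mathbb{G}, \mathcal{E})$; this factorization, verified case by case using the embedding of $\mathbb{G}$ and the structure of the homogeneous bundle defining $X$, is the most delicate part of the argument.
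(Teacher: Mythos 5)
Your proposal follows the paper's proof essentially step for step: reduce to rational normal quartics via Lemma~\ref{lemm: degeneratequartics}, then run the same vector-bundle argument that proves Theorem~\ref{theo: cubicsirreducible}, identifying the space of rational normal quartics on $X$ with the zero locus of a section of the globally generated bundle $\pi_{*}f^{*}\mathcal{E}$ on the irreducible parameter space $N$.

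There is, however, one place where you have misjudged the difficulty. You flag the factorization through $H^{0}(\mathbb{G},\mathcal{E})$ as ``the most delicate part,'' requiring a case-by-case check of the homogeneous bundle. In fact this is automatic, and the paper says so explicitly. Since the theorem restricts to $4 \leq H^3 \leq 18$, the genus $12$ case is excluded, and in every remaining homogeneous case $X$ is cut out of $\mathbb{G}$ by a section of a \emph{direct sum of line bundles} $\mathcal{O}(1)^{\oplus c}$ (with one extra $\mathcal{O}(2)$ when $H^3=10$). Each of these line bundles is the restriction of a line bundle on the ambient projective space, so the restriction maps give $H^{0}(\mathbb{P}^{g+1},\mathcal{O}(a_i)) \to H^{0}(\mathbb{G},\mathcal{O}(a_i)) \to H^{0}(C,\mathcal{O}(a_i))$ with surjective composite (Eagon--Northcott); surjectivity of the second map is then immediate, with nothing to verify case by case.

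Conversely, the point you present as routine --- irreducibility of the parameter space $N$ of rational normal quartics in $\mathbb{G}$ --- is where the short argument you sketch (first pick a $\mathbb{P}^4$, then a quartic inside it) does not actually work as stated: the incidence variety of pairs $(\mathbb{P}^4, C)$ has fibers of wildly varying dimension over the Grassmannian of $\mathbb{P}^4$'s (generically empty), so irreducibility of the total space does not follow from irreducibility of the base. The paper glosses over this as well; it rests on the irreducibility of spaces of rational curves of fixed degree on a homogeneous space (Thomsen, Kim--Pandharipande) together with the observation that the general degree $4$ rational curve on $\mathbb{G}$ is very free and hence spans a $\mathbb{P}^4$.
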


\begin{proof}
By Lemma \ref{lemm: degeneratequartics} we see that any component of $\overline{M}_{0,0}(X,4)$ as in the statement will generically parametrize normal rational curve of degree $4$. Thus it suffices to show that the space of normal rational quartics on $X$ is irreducible.  This follows from the same arguments as Theorem \ref{theo: cubicsirreducible}.
\end{proof}

\section{Movable Bend and Break Lemma}
\label{sec: movable}

Recall that $Z \subset X$ denotes the divisor swept out by lines.  Recall also that if $M$ is a component of $\overline{M}_{0,0}(X, d)$ that generically parametrizes birational maps from $\mathbb{P}^{1}$ to its image then $M^{(n)} \subset \overline{M}_{0,n}(X, d)$ denotes the corresponding $n$-pointed parameter space.

\begin{lemm}
\label{lemm: veryfree}
Let $X$ be a smooth Fano threefold of Picard rank $1$ and index $1$ such that $-K_{X}$ is very ample.
Furthermore when the degree satisfies $4 \leq H^3 \leq 8$ we assume that $X$ is general in its moduli.
Let $d\geq 4$ and let $M$ be a component of $\overline{M}_{0,0}(X, d)$ such that a general curve $f: C \rightarrow X$ in $M$ is irreducible, birational onto its image,  and has image not contained in $Z$.
Then for a general $f: C \rightarrow X$ in $M$, $f$ is very free. 
\end{lemm}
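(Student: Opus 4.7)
The plan is to reduce very-freeness to surjectivity of the two-point evaluation and then to derive a contradiction using the classification of subvarieties of $X$ with large $a$-invariant. For a free rational curve $f : \bP^{1} \to X$ on a threefold, writing $f^{*}T_{X} \cong \bigoplus_{i=1}^{3} \cO(a_{i})$ with $a_{i} \geq 0$, very-freeness (all $a_{i} \geq 1$) is equivalent to dominance of the evaluation $F_{x} \times_{M} M^{(2)} \to X$ at the second marked point for general $x \in X$, where $F_{x} := \mathrm{ev}_{1}^{-1}(x) \subset M^{(1)}$. So I will assume otherwise and let $Y_{x}$ denote the closed image, which is irreducible by Theorem~\ref{theo: irrfibers}. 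A dimension count using $\dim F_{x} = d - 2 \geq 2$ together with birationality of a general member of $M$ onto its image rules out $\dim Y_{x} \leq 1$: otherwise, the birational parametrizations of a fixed rational curve sending a fixed point to $x$ form only a $1$-dimensional family, contradicting $d - 2 \geq 2$.

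Thus $\dim Y_{x} = 2$. Let $\phi : \widetilde{Y}_{x} \to Y_{x}$ be a resolution and let $\widetilde{C}$ be the strict transform of a general curve in $F_{x}$; note that $\widetilde{C}$ is movable on $\widetilde{Y}_{x}$ because the lifted family covers it. The lifts of curves in $F_{x}$ form a $(d-2)$-dimensional subfamily of pointed rational curves on $\widetilde{Y}_{x}$ through a general point, sitting inside a component of $\overline{M}_{0,1}(\widetilde{Y}_{x}, [\widetilde{C}])$ with dominant evaluation; this component's expected dimension is attained at free curves, so the fiber of the evaluation over a general point has dimension $-K_{\widetilde{Y}_{x}} \cdot \widetilde{C} - 2$. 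This yields the key inequality $-K_{\widetilde{Y}_{x}} \cdot \widetilde{C} \geq d = \phi^{*}H \cdot \widetilde{C}$. Pairing the pseudo-effective class $K_{\widetilde{Y}_{x}} + a(Y_{x}, H)\phi^{*}H$ with the movable class $[\widetilde{C}]$ (using that movable curves pair non-negatively with pseudo-effective divisors, by BDPP) gives $a(Y_{x}, H) \geq 1$. Since a general $f \in M$ has image not contained in $Z$, $Y_{x} \not\subset Z$, so $a(Y_{x}, H) \leq 1$ by the characterization of $Z$, and therefore $a(Y_{x}, H) = 1$.

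The main obstacle is closing out this last case, for which I invoke Theorem~\ref{theo: weaklybalanced for Fano 3-folds}: $(Y_{x}, H)$ is not adjoint rigid, so $D := K_{\widetilde{Y}_{x}} + \phi^{*}H$ is represented by an effective divisor whose Iitaka fibration $\pi : \widetilde{Y}_{x} \dashrightarrow B$ has conic fibers. Because $\widetilde{C}$ is a rational curve of $\phi^{*}H$-degree $d \geq 4 > 2$, it cannot be contained in a fiber, so it must dominate $B$; since $\widetilde{C} \cong \bP^{1}$, also $B \cong \bP^{1}$. After passing to a model where $\pi$ becomes a morphism, $D$ is the pullback of an effective divisor of positive degree on $\bP^{1}$ modulo a vertical effective correction, so any horizontal movable curve has strictly positive intersection with $D$. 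This contradicts the inequality $D \cdot \widetilde{C} \leq 0$ equivalent to the previously derived $-K_{\widetilde{Y}_{x}} \cdot \widetilde{C} \geq d$, completing the proof.
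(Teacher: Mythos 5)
Your reduction to dominance of the second-point evaluation map matches the paper's setup, and your use of Theorem~\ref{theo: irrfibers} to get irreducibility of $Y_x$ and of Theorem~\ref{theo: weaklybalanced for Fano 3-folds} to get the conic fibration is in the right spirit. However, there is a fatal gap in the key inequality $-K_{\widetilde{Y}_x}\cdot \widetilde{C} \geq d$.

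You derive this inequality by treating $\phi^{-1}(x)$ as a \emph{general} point of $\widetilde{Y}_x$, so that the fiber of $\mathrm{ev}_1$ there has the expected dimension $-K_{\widetilde{Y}_x}\cdot\widetilde{C}-2$, which you compare with $\dim F_x = d-2$. But $\widetilde{Y}_x$ is the surface swept out by deformations of $C$ \emph{through} $x$, so $x$ is a distinguished, highly special point of $Y_x$, not a general one, and fiber dimension over a special point only satisfies the wrong-direction inequality (it is $\geq$, not $\leq$, the generic fiber dimension). In fact the paper's own argument shows the correct equality is $-K_{\widetilde{S}}\cdot\widetilde{C} = d-1$, strictly less than your claimed bound; it obtains this by passing to an embedded resolution $\widetilde{S}\subset \widetilde{X}\to X$ in which the preimage of $x$ is \emph{divisorial}, so that "passing through $x$" becomes a topological intersection condition preserved across the entire component of $\overline{M}_{0,0}(\widetilde{S})$, forcing $T^{\circ}$ to be \emph{dense} in that component. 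This is exactly the subtlety your argument elides. With the correct value $d-1$, one only gets $a(Y_x,H)\geq (d-1)/d$ (after which one needs Lemma~\ref{lemm: spectrumforsurfaces} to pin down $a$), and crucially $D\cdot\widetilde{C} = (K_{\widetilde{Y}_x}+\phi^*H)\cdot\widetilde{C} = 1 > 0$, so the final contradiction you draw — $D\cdot\widetilde{C}\leq 0$ against positivity of horizontal intersections — simply does not materialize. The actual proof must instead rule out $a(S,H)=1$ by a further geometric argument: it shows $H^2\cdot S\leq 12$ and that $S$ is rationally fibered by conics over $\mathbb{P}^1$, and then contradicts the fact that the Fano surface of conics of $X$ carries only finitely many rational curves (via \cite[Theorem 5.3]{Kuz05}, \cite[Corollary 7.3]{DIM12} for $H^3\geq 10$, or Lemma~\ref{lemm: conics} for $4\leq H^3\leq 8$), and for $d=4$ it must additionally exclude $a(S,H)=3/4$ using Theorem~\ref{theo: 34classification} (and \cite{CR17} for low degree). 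None of this extra machinery is dispensable once the inequality is corrected.
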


\begin{proof}
Since the image of $C$ is not contained in $Z$, a general curve $f: C \rightarrow X$ in $M$ must be free by Theorem \ref{intro:expecteddim}. We only need to show that
\[
\mathrm{ev}_2 : M^{(2)} \rightarrow X \times X
\]
is dominant.
Suppose that it is not dominant.  Since there is at least a one-parameter family of curves through a general point, the image must be an irreducible divisor $D$ in $X^2$.

Choose a general point $(x_1, x_2) \in D$ so that a general curve $C$ in $M$ passing through $x_1, x_2$ is a free birational stable map and the dimension of $\mathrm{ev}_2^{-1}(x_1, x_2)$ is  $d-3$. 
Pick an irreducible component $T$ of $\mathrm{ev}_1^{-1}(x_1)$ which is $d-2$ dimensional, and let $S$ be the surface swept out by free birational curves parametrized by $T$. Choose an embedded resolution $\widetilde{S} \subset \widetilde{X}$ with a birational morphism $\beta: \tilde{X}\rightarrow X$. By blowing up if necessary, we may assume that the locus $\beta^{-1}(x_1)$ is divisorial in $\widetilde{X}$. Strict transforms $\widetilde{C}$ on $\widetilde{S}$ are parametrized by an open locus $T^\circ$ of $T$ which is $d-2$ dimensional, and they meet with the locus $\beta^{-1}(x_1)$ positively.
Any curves in a component of $\overline{M}_{0,0}(\widetilde{S})$ containing $\widetilde{C}$ satisfy the same intersection properties as $\widetilde{C}$, so their pushforwards contain $x_1$. This means that $T^\circ$ is dense in a reduced component of $\overline{M}_{0,0}(\widetilde{S})$. Hence we have
\[
-K_{\widetilde{S}}.\widetilde{C}-1 = d-2.
\]
Since these curves are covering $\widetilde{S}$, we have the inequality
\[
a(S, H)d = a(S, H)\beta^*(H) . \widetilde{C}\geq-K_{\widetilde{S}}.\widetilde{C} 
\]
and this implies that
\[
\frac{d-1}{d} \leq a(S, H).
\]
By Lemma \ref{lemm: spectrumforsurfaces} we can write $a(S, H) = \frac{c}{n}$ where $c= 2$ or $3$ and $n$ is an integer.

First suppose that the degree $d$ is $\geq 5$.  The only possible value of $a(S,H)$ satisfying the above restriction is $1$ and so we must show that $a(S, H)=1$ is impossible.  Using the existence of a 2-parameter family of rational curves on $S$ and arguing just as we did in the $d=2$ case of the proof of Lemma \ref{lemm: degeneratequartics}, we see that $H^{2}.S \leq 12$ and that $S$ admits a rational fibration by conics over the base $\mathbb{P}^{1}$.  We conclude that this is impossible when $10 \leq H^{3}$ just as we did in the proof of Lemma \ref{lemm: degeneratequartics}.
When $4 \leq H^3 \leq 8$, then by generality the Fano surface $F$ contains only finitely many rational curves by Lemma~\ref{lemm: conics}.  Thus $X$ can not be covered by such surfaces, yielding a contradiction in this case.

Suppose now that $d = 4$.  \cite[Main Theorem and Theorem 5.2]{CR17} shows that a general quartic rational curve is very free when the degree is $4 \leq H^3\leq 8$. So we may suppose that $H^3 \geq 10$. Repeating the argument above, our assumption shows that an irreducible $2$-dimensional family of quartics passing through a general point sweeps out an irreducible surface $S$ and either $a(S, H) = 1$ or $a(S,H) = 3/4$.  The first case is ruled out in a similar way.  So it suffices to consider the case when $a(S,H) = 3/4$.  For a general point $x_{3}$ in a general such surface $S$ the family of quartics on $S$ passing through $x_3$ is $2$-dimensional by construction.   However, Theorem~\ref{theo: 34classification} implies that if we take the strict transform of this family of quartics to a resolution of $S$ they must coincide with the strict transform of the family of lines on $\mathbb{P}^{2}$ under a birational map.  In particular there should be only a $1$-dimensional family of such curves through $x_3 \in S$, giving a contradiction.
\end{proof}

\begin{lemm}
\label{lemm: evaluation}
Let $X$ be a smooth Fano threefold of Picard rank $1$ and index $1$ such that $-K_{X}$ is very ample and $X$ has degree $4 \leq H^{3} \leq 18$.
Assume that $X$ is general in its moduli.
Let $d \geq 2$ and let $M$ be a component of $\overline{M}_{0,0}(X, d)$ such that a general curve $C$ is an irreducible birational stable map and not contained in $Z$. Then 
\[
\mathrm{ev}_n : M^{(n)} \rightarrow X^n
\]
is dominant if and only if $d\geq 2n$.
\end{lemm}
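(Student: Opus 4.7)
The ``only if'' direction is an immediate dimension count: $M^{(n)}$ has dimension $-K_X\cdot C + n = d+n$, while $X^n$ has dimension $3n$, so dominance of $\mathrm{ev}_n$ forces $d\geq 2n$.

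For the converse I would argue by induction on $n$. The base case $n=1$ follows from Theorem~\ref{intro:expecteddim}: since the image of a general member of $M$ is not contained in $Z$, such a member is free, so $\mathrm{ev}_1$ is dominant. The case $n=2$ is exactly the content of Lemma~\ref{lemm: veryfree}.

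For the inductive step suppose $n\geq 3$ and $d\geq 2n$. The plan is to apply the Movable Bend and Break Lemma to $M$, iterating on one of the resulting components if necessary, to identify inside the boundary of $M$ in $\overline{M}_{0,0}(X,d)$ a codimension one locus whose generic member is a reducible stable map $C_1\cup C_2$, where $C_2$ is a free conic in the unique conic component $M_2$ of Theorem~\ref{theo: fanoschemeofconics}, and $C_1$ is a free curve in a component $M_1$ of $\overline{M}_{0,0}(X,d-2)$ generically parametrizing irreducible birational curves whose image is not contained in $Z$. Since $d_1=d-2\geq 2(n-1)$, the inductive hypothesis applied to $M_1$ yields that $\mathrm{ev}_{n-1}:M_1^{(n-1)}\to X^{n-1}$ is dominant, so for a general $(x_1,\dots,x_{n-1})\in X^{n-1}$ there exists $C_1\in M_1$ passing through these points. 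Placing the remaining marked point on $C_2$ and letting the node $y$ lie on $C_1\cap C_2$, Corollary~\ref{coro: irreducibility} applied to $M_1$ and $M_2$ guarantees that the relevant fiber product has an irreducible component of the expected dimension $d+n-1$, and smoothing the node produces deformations into $M^{(n)}$ whose evaluations cover an open subset of $X^n$.

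The main obstacle is the tightness at the threshold $d=2n$: a direct dimension count shows that the boundary locus in $\overline{M}^{(n)}$ has dimension $d+n-1=3n-1$, which is one less than $\dim X^n$, so the boundary alone dominates only a divisor in $X^n$. The missing dimension must come from smoothing the node transversely to the image of the boundary. I would verify the required transversality by noting that the node $y$ ranges over a one-parameter family of positions on $C_1$ and that the Fano surface of conics (of dimension two by Theorem~\ref{theo: fanoschemeofconics}) supplies a one-parameter family of conics $C_2$ through any fixed point; combined with the one-parameter smoothing of the node, these degrees of freedom provide the extra direction needed to raise the image under $\mathrm{ev}_n$ to full dimension $3n$.
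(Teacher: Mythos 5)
Your approach is fundamentally different from the paper's, and it has two serious problems.

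\textbf{Circularity.} Your inductive step invokes the Movable Bend and Break Lemma (Theorem~\ref{theo: movbendandbreak}) applied to $M\subset\overline{M}_{0,0}(X,d)$. But in the paper's logical structure, Lemma~\ref{lemm: evaluation} is proved \emph{before} Theorem~\ref{theo: movbendandbreak}, and the proof of the latter opens by appealing to the former: the very first step of MBB in degree $d$ is ``it follows from Lemma~\ref{lemm: evaluation} that the evaluation map $M^{(n)}\to X^n$ is dominant,'' which is then combined with bend-and-break. So proving Lemma~\ref{lemm: evaluation} for degree $d$ by applying MBB for degree $d$ is circular. The same problem affects your reliance on there being a unique dominant component $M_1\subset\overline{M}_{0,0}(X,d-2)$ and on knowing which boundary strata actually live in $M$: these facts come from Theorem~\ref{theo: countingcomponents}, which is again downstream of Lemma~\ref{lemm: evaluation}.

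\textbf{The transversality gap.} You correctly identify that when $d=2n$ the boundary locus you produce has dimension $3n-1 = \dim X^n - 1$, so its image under $\mathrm{ev}_n$ is at most a divisor, and you need the one extra direction from smoothing the node to land transversally. But the argument you give for this --- counting degrees of freedom from moving the node, moving the conic, and smoothing --- does not establish it: the node position and the conic family are already accounted for in the $3n-1$ dimensions, and the extra smoothing parameter could in principle map tangentially to the image of the boundary. What is really at issue is the vanishing of the obstruction $H^1(C, N_{C/X}(-n))$ for the smoothed curve, which you have no control over. This is exactly where the paper's proof makes an essential different choice: it uses Lemma~\ref{lemm: veryfree} to deduce that a general $C$ in $M$ is a very free closed immersion and then quotes \cite[Theorem 1.4]{Shen10}, which says the normal bundle is balanced, $N_{C/X}=\mathcal{O}(a)\oplus\mathcal{O}(b)$ with $|a-b|\leq 1$. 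From this one sees directly that $H^1(C,N_{C/X}(-\lfloor d/2\rfloor))=0$, and then a clean incidence-correspondence dimension count closes the argument without any gluing or smoothing at all. Your proposal replaces this one external input (Shen's theorem) with a bend-and-break/regluing argument, but that argument is precisely the kind of thing that Shen's theorem is being used to avoid, and without some control on the normal bundle of the smoothed curve your key transversality step has no proof.

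Your ``only if'' direction and the base cases $n=1,2$ are fine and match what the paper implicitly observes.
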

\begin{proof}
Since a general curve $C$ in $M$ is free, the component $M$ has the expected dimension $d$. Thus $d\geq 2n$ is a necessary condition for $\mathrm{ev}_n$ to be dominant.
When $d =2, 3$, then our assertion is clear because a general curve is free.  

We need to consider the case when $d \geq 4$.
By Lemma \ref{lemm: veryfree} any general curve in $M$ is very free so in particular a general $f: C\rightarrow X$ is a closed immersion into $X$. 
If we look at the sublocus of $M$ parametrizing free curves through $n$ fixed points, the tangent space and the obstruction space of $M$ at $C$ are given respectively by
\[
H^0(C, N_{C/X}(-n)) \qquad \textrm{and} \qquad H^{1}(C,N_{C/X}(-n)).
\]
\cite[Theorem 1.4]{Shen10} shows that $N_{C/X} = \mathcal O(a) \oplus \mathcal O(b)$ satisfies $|a-b| \leq 1$.  For a curve of degree $d$ the degree of $N_{C/X}$ is $d-2$, so if we fix $\lfloor d/2 \rfloor$ points the obstruction space vanishes.  
This means that if there exists a curve through $\lfloor d/2 \rfloor$ points, then the space of such curves has the expected dimension (namely $0$ if $d$ is even or $1$ if $d$ is odd).  Consider the incidence correspondence which parametrizes free curves in our family with balanced normal bundle and $(\lfloor d/2 \rfloor)$-tuples of distinct points in $X$ such that the curve contains all the points.  Every non-empty fiber of the projection onto $\mathrm{Sym}^{\lfloor d/2 \rfloor}(X)$ has the expected dimension; a dimension count then shows that the projection onto $\mathrm{Sym}^{\lfloor d/2 \rfloor}(X)$ is dominant.  This proves our statement.
\end{proof}

\begin{coro}
\label{coro: domains}
Let $X$ be a smooth Fano threefold of Picard rank $1$ and index $1$ such that $-K_{X}$ is very ample and $X$ has degree $4 \leq H^3 \leq 18$.
Assume that $X$ is general in its moduli.
Let $d \geq 2$ and $M$ be a component of $\overline{M}_{0,0}(X, d)$ such that
\[
\mathrm{ev}_n : M^{(n)} \rightarrow X^n
\]
is dominant where $n = \lfloor d/2 \rfloor$.  Then $M$ generically parametrizes stable maps which are irreducible birational.
\end{coro}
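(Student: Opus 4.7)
The plan is to combine the obstruction-theoretic lower bound $\dim M^{(n)} \geq d + n$ with the constraints imposed by dominance of $\mathrm{ev}_n$, using the combinatorial structure of a generic stable map in $M$. Write the domain as $C = \bigcup_{i=1}^{r} C_i$ with components of degrees $d_i$, with $a_i$ marked points on $C_i$, and suppose $f|_{C_i}$ is a $c_i$-fold cover of its image. Then the boundary stratum of $M^{(n)}$ corresponding to this combinatorial type has dimension
\[
\sum_{i} \dim M_i^{(t_i + a_i)} - 3(r - 1) = d + n - (r - 1) + \sum_{i} \mathrm{excess}_i,
\]
where $t_i$ is the number of attachment points on $C_i$ (satisfying $\sum t_i = 2(r - 1)$) and $\mathrm{excess}_i$ is the excess of $\dim M_i^{(t_i + a_i)}$ over the expected value $d_i + t_i + a_i$. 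Since $M^{(n)}$ is a component of $\overline{M}_{0,n}(X,d)$ of dimension at least $d + n$, this forces
\[
\sum_{i} \mathrm{excess}_i \geq r - 1.
\]

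Dominance of $\mathrm{ev}_n$ places each marked point at a general point of $X$, so every marked point sits on a \emph{free} component $C_i$ whose image sweeps out $X$. Theorem~\ref{intro:expecteddim} shows that such components have $\mathrm{excess}_i = 0$. The image of $f|_{C_i}$ has moduli of dimension $d_i / c_i$, and to pass through $a_i$ general points of $X$ we need $d_i / c_i \geq 2 a_i$, hence $d_i \geq 2 a_i$. Summing over the set $S$ of free components gives $\sum_{i \in S} d_i \geq 2n \geq d - 1$, so the non-free components have total degree at most $1$: there is at most one non-free component, and if it exists it must be a line. The moduli $\overline{M}_{0,*}(X, 1)$ of lines is of expected dimension, so such a line also contributes $\mathrm{excess}_i = 0$. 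Therefore $\sum_{i} \mathrm{excess}_i = 0$, and the inequality above forces $r = 1$.

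With $r = 1$, if the generic irreducible map were a $c$-fold cover with $c \geq 2$, then the image moduli would have dimension $E = d - 2(c - 1)$ (any excess would place the image in a proper subvariety of $X$, violating dominance), but dominance of $\mathrm{ev}_n$ would require $E \geq 2n \geq d - 1$, forcing $c \leq 3/2$ and hence $c = 1$. This shows the generic map in $M$ is irreducible and birational.

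The most delicate step is the excess analysis for the reducible case: non-free components mapping into higher-$a$-invariant subvarieties of $X$ can in principle contribute positive excess to $\sum \mathrm{excess}_i$, but the degree inequality $\sum_{i \in S} d_i \geq d - 1$ leaves room only for a single unmarked line among the non-free components, and the line moduli has no excess to offer.
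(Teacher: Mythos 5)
Your argument is, at heart, a dimension count of the same flavor as the paper's, but you organize it as a single global excess calculation over all $r$ components, whereas the paper organizes it inductively: if the general stable map in $M$ is reducible, split at a node into two connected subcurves $C_1 \cup C_2$, use Lemma~\ref{lemm: evaluation} to force the point distribution, apply the inductive hypothesis to conclude each $C_i$ is irreducible birational, and then invoke Corollary~\ref{coro: irreducibility} (for two free curves) or Lemma~\ref{lemm: free+lines} (for a free curve plus a line) to get the contradiction from dimension. Your global organization is cleaner conceptually, but it exposes a genuine gap that the paper's inductive setup is designed to avoid.

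The gap is the asserted equality
\[
\dim(\text{boundary stratum}) = \sum_i \dim M_i^{(t_i + a_i)} - 3(r-1).
\]
A fiber product always satisfies the \emph{lower} bound $\dim \geq \sum_i \dim M_i^{(t_i+a_i)} - 3(r-1)$, but the inequality you actually need to derive $\sum_i \mathrm{excess}_i \geq r-1$ from $\dim M^{(n)} \geq d+n$ is the \emph{upper} bound, and that is not automatic. It requires the $r-1$ matching conditions at the nodes to each impose the full codimension $3$, i.e.\ the relevant evaluation maps must be dominant/flat (Lemma~\ref{lemm: glued components}). This fails, for instance, when two non-free components are glued directly to each other: two lines meeting at a node give a fiber product of dimension $2+2-\dim Z = 2$, not the expected $2+2-3=1$. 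As written, your argument derives the structure (at most one line, everything else free) \emph{from} the excess inequality, but the excess inequality needs the structure to be justified in the first place; this is a circularity.

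The circularity is repairable because your degree-bound step, $\sum_{i\in S} d_i \geq 2n$, does not use the dimension formula at all — it rests only on Lemma~\ref{lemm: evaluation} applied to the images of the free components. So the argument should be reordered: first establish the structure (all components free except possibly a single line, with every node joining a free curve to something), and \emph{then} verify the dimension formula via Corollary~\ref{coro: irreducibility} for free–free junctions and Lemma~\ref{lemm: free+lines} for free–line junctions. You should also cite Lemma~\ref{lemm: evaluation} explicitly in the degree bound, and record that $\mathrm{excess}_i \geq 0$ for any actual component $M_i$ of $\overline{M}_{0,0}(X,d_i)$ (a consequence of deformation theory), since a negative excess would otherwise vacate the inequality. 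A smaller quibble: in the $r=1$ step, the identification $E = d - 2(c-1)$ already presupposes $\dim M = d$, while the image moduli is $\dim N = d/c$; these are only consistent when the image is a conic, so the case analysis should be phrased as ``either $M$ has excess (so images lie in $V$, contradiction) or $\dim M = d$, forcing $d = 2c$ and $\dim N = 2 < 2n$.'' The conclusion $c=1$ is still correct.
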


\begin{proof}
We prove this statement by induction on $d$.
The case of $d=2$ is settled by Theorem~\ref{theo: fanoschemeofconics}.
For the induction step, first we assume that $d = 2n +1$ is odd.
Suppose that $M$ generically parametrizes stable maps whose domains are reducible.
We denote a general stable map by $f: C_1\cup C_2 \rightarrow X$ passing through $n$ general points.
However, since the image contains $n$ general points, if we denote the degrees of the curves by $2m_1, 2m_2 +1$ so that $n=m_1 + m_2$ then we see $C_1$ must contain $m_{1}$ general points and $C_2$ must contain $m_{2}$ general points. 
By the induction hypothesis, each $C_i$ is an irreducible birational stable map.
However, by Corollary \ref{coro: irreducibility} and Lemma~\ref{lemm: free+lines}, such stable maps cannot form a $d$-dimensional component of the moduli space, a contradiction.
Thus the domain of a general stable map in $M$ must be irreducible.
If a general stable map in $M$ is a multiple cover, then for dimension reasons, it must be a multiple cover of a conic. But clearly such curves can not contain the desired number of general points.
The case of $d=2n$ is similar.
\end{proof}

\begin{coro} \label{coro: genpointsclassification}
Let $X$ be a smooth Fano threefold of Picard rank $1$ and index $1$ such that $-K_{X}$ is very ample and $X$ has degree $4 \leq H^3 \leq 18$.
Assume that $X$ is general in its moduli.
Fix $n \geq 1$ general points of $X$ and let $f: C \to X$ be a stable map whose image contains all $n$ points.
\begin{enumerate}
\item Suppose that $f$ has degree $2n$.  Then $f$ has irreducible domain and maps birationally onto a free curve.
\item Suppose that $f$ has degree $2n+1$.  If the image of $f$ is reducible, then either it is a union of  free curves or the union of a free curve and a line.
\item Suppose that $f$ has degree $2n+2$.  If the image of $f$ is reducible, then either it is a union of free curves, the union of an irreducible free curve with an irreducible curve of degree $\leq 2$, or the union of a line with one free curve of even degree and one free curve of odd degree, or the union of an irreducible free curve with two lines that do not intersect.
\end{enumerate}
\end{coro}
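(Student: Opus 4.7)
The plan is to decompose the stable map $f\colon C \to X$ into its irreducible source components $f_i\colon C_i \to X$ of degree $d_i$, each containing $n_i$ of the $n$ general points, so that $\sum d_i = d$ and $\sum n_i = n$.  The argument then combines a component-wise degree inequality with the standard dimension formula for boundary strata of $\overline{M}_{0,n}(X, d)$.

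For the degree inequality, I would first observe that a component with $n_i \geq 1$ cannot have its image contained in the line locus $Z$, so the image deforms in a dominant family.  Writing $d_i = k_i e_i$ with $e_i$ the image degree and $k_i$ the degree of $f_i$ onto its image, Lemma~\ref{lemm: evaluation} applied to the dominant family of the image gives $e_i \geq 2n_i$, hence $d_i \geq 2n_i$, with equality forcing $k_i = 1$ and $e_i = 2n_i$, i.e.\ $C_i$ maps birationally onto a free curve.  Components with $n_i = 0$ satisfy $d_i \geq 1$, with equality iff $C_i$ is a line.  For the dimension count, the boundary stratum of $\overline{M}_{0,n}(X, d)$ whose dual graph has $r$ source components has expected dimension $d + n - (r - 1)$, and for its evaluation $\mathrm{ev}_n$ to dominate $X^n$ I need $r \leq d - 2n + 1$, which yields $r = 1$, $r \leq 2$, and $r \leq 3$ in Cases~(1), (2), and~(3) respectively.

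From here the proof becomes a finite case analysis on $r$ and the number of line components. Case~(1) immediately forces $r = 1$ with $d_1 = 2n = 2n_1$ tight, so $f$ is irreducible birational onto a free curve.  In Case~(2) the $r = 2$ configurations are either two free components summing to $2n+1$ or one tight free component of degree $2n$ attached to a line (two lines are impossible for $n \geq 1$ on degree grounds).  In Case~(3), the $r = 2$ configurations are two free components or an irreducible free curve attached to an irreducible curve of degree $\leq 2$, while the $r = 3$ configurations are three free components, one line attached to two free components of opposite parity, or an irreducible free curve attached to two lines.  For the last subcase I would verify that the two lines must be disjoint in $X$: an intersecting pair of lines is the limit of a degenerating irreducible conic, so the corresponding three-component stratum sits in the closure of the free-plus-conic clause already recorded, rather than producing an independent family.

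The main obstacle is the bookkeeping at the end of Case~(3), namely checking that the degeneration of a free conic to a pair of intersecting lines is correctly absorbed into the ``irreducible free curve plus irreducible curve of degree $\leq 2$'' clause, so that the four listed options exhaust the reducible possibilities with no overlap.
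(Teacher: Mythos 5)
Your decomposition into source components and the per-component degree inequality via Lemma~\ref{lemm: evaluation} are sound and parallel what the paper does.  The problems lie in the two places where you substitute a softer argument for the paper's explicit geometric input.

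First, the dimension count $r \leq d - 2n + 1$ is not a valid bound.  The quantity $d + n - (r-1)$ is the \emph{expected} dimension of the boundary stratum, and it equals the actual dimension only when every source component lies in a piece of $\overline{M}_{0,*}(X,d_i)$ of expected dimension.  Components mapping to $Z$ or multiply covering low-degree curves can carry excess dimension.  A concrete failure: in Case~(3) take a component of degree $2$ with $n_i=0$ that is a double cover of a line (the component of $\overline{M}_{0,0}(X,2)$ parametrizing such covers has dimension $3$, one more than expected) attached to three tight free components of degrees $2n_1,2n_2,2n_3$ with $\sum n_i = n$; the degrees sum to $2n+2=d$, the stratum has $r=4>3$, and its actual dimension is $3n$, so your dominance criterion does not exclude it.  What actually rules such configurations out is the observation the paper puts at the very start of its proof: a general member of a family of free curves cannot meet a fixed curve.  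Since free curves through the required number of general points form a finite set, they are rigid once the points are chosen, and one then argues case by case that the remaining components (lines, non-free conics, covers of lines) cannot attach to form a connected tree.  That observation, not a numerical dimension count, is the engine of the argument, and you need it.

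Second, your treatment of the two-line subcase in (3) does not prove what is claimed.  Observing that a pair of intersecting lines is a degeneration of a conic does not allow you to absorb it into the ``irreducible free curve plus irreducible curve of degree $\leq 2$'' clause, because a union of two meeting lines is not an irreducible curve of degree $\leq 2$: the image would still be a free curve together with two meeting lines, a configuration the statement asserts does not occur.  The paper rules it out with a fact about the geometry of a general $X$: two general lines on $X$ are disjoint (cited from \cite[Lemma 2.1.8]{KPS16}; this is where generality and the absence of cones of lines enter).  This geometric input is essential; no bookkeeping on closures of strata will replace it.
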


\begin{proof}
We begin the proof with an observation we will use many times.  Fix a curve $C$ in $X$.  Then a general member of a family of free curves of degree $d$ can not intersect $C$.  Indeed, standard deformation theory shows that the locus parametrizing free curves through any fixed point of $X$ must have codimension at least $2$ in the moduli space, so the locus parametrizing free curves through a curve must have codimension at least $1$.

(1) Suppose that the image of $f$ is a union of two (possibly reducible) curves $C_{1},C_{2}$.  Using Lemma \ref{lemm: evaluation} we see that both $C_{i}$ must have even degrees $2m_{i}$ with $m_{1}+m_{2} = n$ and that each $C_{i}$ must contain $m_{i}$ of these general points.  By induction on the degree both $C_{1}$ and $C_{2}$ must be irreducible.  Furthermore, for each $i$ the family of curves we obtain as we vary the general points is dense in the corresponding component of $\overline{M}_{0,0}(X)$.  Since there are only finitely many curves of degree $2m$ through $m$ general points, for general choices of points $C_{1}$ and $C_{2}$ can not meet to form a connected curve.  Indeed, by the observation above the incidence correspondence identifying deformations of $C_{1}$ and $C_{2}$ which intersect does not dominate the product of the two moduli components. 
Finally, any rational curve of bounded degree through a general point of $X$ must be free.

(2) Suppose that the image of $f$ is a union of irreducible curves $C_{1},C_{2},\ldots,C_{k}$.  By Lemma \ref{lemm: evaluation}, at most one of these curves can have odd degree, say $C_{k}$.  Then every other $C_{i}$ must have even degree $2m_{i}$ and go through $m_{i}$ of the points; in particular there are only finitely many possibilities for each and as we vary the points the curves are dense in the corresponding component of $\overline{M}_{0,0}(X)$.  Furthermore each is a free curve.  The remaining curve $C_{k}$ must have odd degree $2m_{k}+1$ and go through $m_{k}$ of the points.  In order to obtain a connected curve, $C_{k}$ also must meet each of the $C_{i}$.

Now suppose that $C_{k}$ is not free, so that $m_{k}=0$.  In other words, $C_{k}$ is a line.  We claim that there can be only one other component $C_{1}$.  Once we fix a component $C_{1}$ containing a general point, there are only finitely many lines intersecting it.  By applying the observation at the beginning of the proof, we see that a general deformation of any free curve will not intersect either $C_{1}$ or this finite set of lines.  Thus to obtain a connected curve we can not allow any more components.

(3) Suppose that the image of $f$ is a union of irreducible curves $C_{1},C_{2},\ldots,C_{k}$.  There are two cases to consider.

First suppose that each $C_{i}$ has even degree.  By Lemma \ref{lemm: evaluation}, there is at most one component $C_{k}$ not containing any of the general points, and if such a component exists it has degree $2$ (as a stable map).  Then every other $C_{i}$ has even degree $2m_{i}$ and goes through $m_{i}$ of the points; in particular there are only finitely many possibilities for each and each is a free curve.  In order to obtain a connected curve, $C_{k}$ also must meet each of the $C_{i}$.  If the map onto $C_{k}$ is a double cover of a line, then by arguing as in (2) there is only one other component.  Otherwise the map onto $C_{k}$ is a birational map onto a non-free conic.  Since non-free conics sweep out a surface, we can argue just as we did for lines to see that there is only one other component.

Second suppose that some $C_{i}$ has odd degree.  By Lemma \ref{lemm: evaluation} there are then exactly two such components $C_{k-1}, C_{k}$ of degrees $2m_{k-1}+1$ and $2m_{k}+1$ respectively.  Every other $C_{i}$ has even degree $2m_{i}$ and goes through $m_{i}$ of the points; in particular there are only finitely many possibilities for each and each is a free curve.  By Lemma \ref{lemm: evaluation} $C_{k-1}$ must contain $m_{k-1}$ of the general points and $C_{k}$ must contain $m_{k}$ of the points.  Furthermore, no two of the $C_{i}$ can meet for $i < k-1$.

Suppose that $m_{k} = 0$ but $m_{k-1} > 0$.  Then $C_{k}$ is a line.  We show that there are then at most three components.  Either $C_{k-1}$ and $C_{k}$ meet or they do not.  Suppose that $C_{k-1}$ and $C_{k}$ meet.  If we fix $m_{k-1}$ general points, there is a $1$-parameter family of curves $C_{k-1}$ through the points, and each $C_{k-1}$ intersects finitely many lines $C_{k}$.  If we choose the components $C_{1},\ldots,C_{k-2}$ general, then by the observation above each will intersect with only finitely many members of this one-parameter family.  Thus to obtain a connected curve we see that there can be at most one other component $C_{1}$.  Suppose that $C_{k-1}$ and $C_{k}$ do not meet.  By generality $C_{k}$ and $C_{k-1}$ both only meet with one of the free curves $C_{i}$ (using similar arguments as before) and to obtain a connected curve this component must be the same for each.

Suppose that both $m_{k-1}$ and $m_{k}$ are $0$ so both $C_{k-1}$ and $C_{k}$ are lines.  Note that two general lines cannot meet by \cite[Lemma 2.1.8]{KPS16}. (As mentioned before, a general quartic threefold does not contain a cone.)  Thus by arguing as above we see that there is only one other component which must be a free curve.
\end{proof}

\begin{theo}{\textnormal{(Movable Bend and Break lemma)}} \label{theo: movbendandbreak}
Let $X$ be a smooth Fano threefold of Picard rank $1$ and index $1$ such that $-K_{X}$ is very ample and $X$ has degree $4 \leq H^3 \leq 18$.  Assume that $X$ is general in its moduli.
Let $d \geq4$ and let $M$ be a reduced dominant component of $\overline{M}_{0,0}(X, d)$ generically parametrizing birational stable maps from $\mathbb P^1$. Then $M$ contains a stable map from a chain of length $2$ where each component maps birationally onto a free curve.  
\end{theo}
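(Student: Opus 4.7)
The plan is to argue by induction on $d \ge 4$, with small degrees treated directly and the inductive step reduced to a bend-and-break argument through many general points. For the base cases $d=4$ and $d=5$ the component $M$ is already known to be unique by Theorems \ref{theo: quarticsirreducible} and \ref{theo: cubicsirreducible}; combining this with Theorem \ref{theo: fanoschemeofconics} (on the Fano surface of conics) and with Corollary \ref{coro: irreducibility}, I would construct a codimension-one sublocus of the boundary of $\overline{M}_{0,0}(X,d)$ whose general member is a chain of two free curves (two free conics for $d=4$, a free conic glued to a free cubic for $d=5$). Such chains are unobstructed since each component is free, so they deform to irreducible free curves of degree $d$; by uniqueness of $M$ the smoothings land in $\bar M$, yielding the required chain.

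For the inductive step with $d \ge 6$, assume the conclusion for all $4 \le d' < d$. Fix $n = \lfloor (d-1)/2 \rfloor$ general points in $X$; by Lemma \ref{lemm: veryfree} a generic stable map in $M$ is very free, so by Lemma \ref{lemm: evaluation} the fiber $F \subset M^{(n)}$ over this $n$-tuple has dimension $d-2n \in \{1,2\}$. Extract a complete one-parameter family $B$ in the closure $\bar F \subset \overline{M}_{0,n}(X,d)$, taking $B = \bar F$ if $d$ is odd and a general pencil in $\bar F$ if $d$ is even. The central claim is that $B$ must contain a stable map with reducible domain. If not, the universal curve over $B$ is a $\mathbb{P}^1$-bundle $\pi : \mathcal U \to B$ equipped with $n$ disjoint sections $\sigma_1,\dots,\sigma_n$ that are contracted by the evaluation $\mu : \mathcal U \to X$ to the $n$ fixed points; since $\mu^*H$ is big and nef on $\mathcal U$, vanishes on each $\sigma_i$, and has positive degree on fibers of $\pi$, a numerical computation on the ruled surface $\mathcal U$ --- using the balanced normal-bundle theorem of \cite{Shen10} (already invoked in Lemma \ref{lemm: evaluation}) to bound the section self-intersections --- forces a contradiction.

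Given such a reducible member $f_0 \in B$, apply Corollary \ref{coro: genpointsclassification} to classify it: either (i) $f_0$ is already a chain of two irreducible free curves through the $n$ fixed points, in which case we are done, or (ii) $f_0$ carries a non-free line component. In case (ii) the lines contain none of the $n$ general points, so the free part of $f_0$ absorbs all of them; for $d$ odd this free part has degree $d-1$, and for $d$ even has degree $d-1$ or $d-2$. I would then apply the inductive hypothesis to split that free part into a chain of two free curves, and use Lemma \ref{lemm: free+lines} (which controls how lines glue to free curves and keeps the resulting configuration in $\bar M$) to absorb each line into an adjacent free component, replacing ``free $+$ line'' by a single free stable map of one larger degree. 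The end result is a chain of exactly two free curves lying in $\bar M$.

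The main obstacle is the bend-and-break step: ruling out the possibility that $B$ consists entirely of irreducible free stable maps. This turns on numerical positivity on the ruled surface $\mathcal U$ and crucially relies on the balanced-normal-bundle bound of \cite{Shen10} to control the self-intersections of the sections contracted to the $n$ fixed points. A secondary but delicate difficulty is case (ii): trading a non-free line component for a free one within $\bar M$ requires the simultaneous use of Corollary \ref{coro: genpointsclassification}, the gluing description of Lemma \ref{lemm: free+lines}, and the inductive hypothesis, and care is needed to ensure that the resulting chain of two free curves really lies in the closure of $M$ rather than of some other component of $\overline{M}_{0,0}(X,d)$.
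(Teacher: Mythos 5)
Your overall strategy matches the paper's---induction on $d$, with bend and break through many general points, the classification of degenerations via Corollary~\ref{coro: genpointsclassification}, and the ``free $+$ line'' trade-off---but there are three substantive gaps.

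First, you have missed that the paper runs a \emph{simultaneous} induction coupling Theorem~\ref{theo: movbendandbreak} with the uniqueness statement of Theorem~\ref{theo: countingcomponents}, and the latter is essential. Your case~(ii) step ``apply the inductive hypothesis to split the free part into a chain of two free curves'' only works if you can split that specific free curve \emph{while fixing the point where it meets the line}. The inductive hypothesis you state (MBB in lower degrees) does not give this; the paper deduces it by combining Theorem~\ref{theo: irrfibers} (irreducibility of evaluation fibers) with the uniqueness of the lower-degree component (Theorem~\ref{theo: countingcomponents} for $e<d$). The observation ``any free curve of degree $3<f<d$ through a point $x \notin W$ deforms to a chain of two free curves keeping $x$ fixed'' depends on both theorems. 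Without this your splitting step is ungrounded, and the final claim that the resulting chain actually lies in the closure of $M$ (which you yourself flag as a concern) has no argument behind it.

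Second, your base case $d=5$ is wrong as stated: Theorems~\ref{theo: quarticsirreducible} and~\ref{theo: cubicsirreducible} give uniqueness only for $d=4$ and $d=3$, not $d=5$. If another dominant birational component existed in degree $5$, your smoothed conic-plus-cubic chain might land in it rather than in $\bar M$. The paper sidesteps this by taking only $d=4$ as the MBB base case and beginning the inductive step at $d=5$, where uniqueness in lower degrees is available.

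Third, for even $d$ your list of degenerations in case~(ii) is incomplete. Corollary~\ref{coro: genpointsclassification}.(3) also allows (free curve)$\cup$(non-free conic or double line), (line)$\cup$(even free)$\cup$(odd free), and (free curve)$\cup$(two disjoint lines). The paper's insertion of the conic incidence condition is precisely designed to kill all of these at once (they form finite families through $n-1$ general points, so a general conic misses them); your general pencil in the $2$-dimensional fiber $\bar F$ would need an extra finiteness-plus-genericity argument to avoid them, and you supply none. Also, Lemma~\ref{lemm: free+lines} does not do what you ascribe to it: it is a dimension bound on $M^{(1)}\times_X \overline{M}_{0,1}(X,1)$, not a smoothing statement. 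The paper's absorption of a line into an adjacent free component proceeds via the normal bundle $N_{C_2/X}=\mathcal O\oplus\mathcal O(-1)$, smoothness of the resulting stable map as a point of the moduli space, a dimension count showing the relevant component generically parametrizes irreducible curves, and upper semicontinuity of the number of components.
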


\begin{theo} \label{theo: countingcomponents}
Let $X$ be a smooth Fano threefold of Picard rank $1$ and index $1$ such that $-K_{X}$ is very ample and $X$ has degree $4 \leq H^3 \leq 18$.  Assume that $X$ is general in its moduli.
Let $d \geq 3$.  Then there is a unique component $M$ of $\overline{M}_{0,0}(X, d)$ which is dominant and generically parametrizes birational stable maps from $\mathbb P^1$.  
\end{theo}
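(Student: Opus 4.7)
The plan is strong induction on $d$. The base cases $d=3$ and $d=4$ are Theorem~\ref{theo: cubicsirreducible} and Theorem~\ref{theo: quarticsirreducible} respectively, with uniqueness in degree $2$ supplied by Theorem~\ref{theo: fanoschemeofconics}. So fix $d \geq 5$ and assume uniqueness in every smaller degree $\geq 2$.

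Let $M$ be a dominant component of $\overline{M}_{0,0}(X,d)$ generically parametrizing birational stable maps. By the Movable Bend and Break Lemma (Theorem~\ref{theo: movbendandbreak}), the closure $\overline{M}$ contains a chain of two free curves of degrees $d_{1},d_{2} \geq 2$ with $d_{1}+d_{2}=d$. By the inductive hypothesis each of the components lies in the unique dominant birational component $M_{d_{i}} \subset \overline{M}_{0,0}(X,d_{i})$, and Corollary~\ref{coro: irreducibility} then produces a unique irreducible component $N(d_{1},d_{2}) \subset M_{d_{1}}^{(1)} \times_{X} M_{d_{2}}^{(1)}$ generically parametrizing such glued chains, of the expected codimension $1$ in $\overline{M}_{0,0}(X,d)$. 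Because the gluing of two free curves at a transverse node is an unobstructed stable map, a generic point of $N(d_{1},d_{2})$ is a smooth point of $\overline{M}_{0,0}(X,d)$, so $N(d_{1},d_{2})$ lies on a \emph{unique} irreducible component, which must be $M$.

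To conclude I would show that for any second dominant birational component $M'$, MBB produces a glued locus $N(d_{1}',d_{2}')$ whose closure meets the closure of $M$; by the uniqueness just proved, this then forces $M'=M$. A priori the partitions $(d_{1},d_{2})$ and $(d_{1}',d_{2}')$ may differ, so the key point is to verify that for any two admissible partitions of $d$, the corresponding $N$'s lie in the closure of the same component. I would do this one unit at a time: starting from $(d_{1},d_{2})$ with $d_{2} \geq 3$, degenerate the degree-$d_{2}$ component of a chain in $\overline{M}$ into a line meeting a free curve of degree $d_{2}-1$ (such a degeneration exists in $\overline{M_{d_{2}}}$ by Lemma~\ref{lemm: free+lines} combined with a standard smoothing/unobstructedness computation using the balanced normal bundle of a generic line). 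Placing the line adjacent to the original node produces a linear three-component chain of degrees $(d_{1},1,d_{2}-1)$ in $\overline{M}$; smoothing its remaining outer node then shows that $N(d_{1}+1,d_{2}-1)$ also lies in $\overline{M}$. Iterating this move connects any two admissible partitions of $d$, and applying the same logic to $M'$ produces a common $N$ in both closures.

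The main obstacle I expect is the verification that inserting a line into a chain of free curves remains unobstructed and realises the desired smoothing. This reduces to checking, for a generic line $\ell \subset X$, that its normal bundle splits as $\cO \oplus \cO(-1)$ — standard for our threefolds — and to combining this with the deformation-theoretic input from Lemma~\ref{lemm: free+lines} to certify that the resulting three-component chain indeed lies in the closure of $M$ rather than in some non-dominant component of $\overline{M}_{0,0}(X,d)$.
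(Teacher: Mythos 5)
Your proposal is correct and follows the same overall induction framework as the paper (base cases from Theorems~\ref{theo: cubicsirreducible}, \ref{theo: quarticsirreducible}, Movable Bend and Break, uniqueness of $N(d_1,d_2)$ via Corollary~\ref{coro: irreducibility}, and the smooth-point argument forcing a unique component), but it diverges in the step that identifies all partitions. The paper breaks one free piece, say of degree $e\geq 3$, into a free \emph{conic} plus a degree-$(e-2)$ curve (using the induction hypothesis and Theorem~\ref{theo: irrfibers}), and in a single step shows that $M$ contains the canonical $N(2,d-2)$; uniqueness then follows from uniqueness of that one main component. You instead insert a \emph{line} and iterate $(d_1,d_2)\leadsto(d_1+1,d_2-1)$ one unit at a time. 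Both work, but your route has to address two points that the conic route essentially bypasses: first, the three-component chain contains a non-free middle component with normal bundle $\cO\oplus\cO(-1)$, so unobstructedness is not automatic from freeness and must be checked (it does hold, by the normalization sequence and surjectivity of the node evaluation on the free outer pieces — the same computation the paper uses inside the proof of Theorem~\ref{theo: movbendandbreak}); second, for the original node to land on the line after degenerating, the attachment point must lie on the divisor $Z$ swept out by lines, a codimension-one constraint that needs to be arranged within $N(d_1,d_2)$, whereas free conics pass through a general point of $X$. The paper's conic degeneration thus keeps the argument inside the unobstructed regime of free-curve chains and lands directly on $N(2,d-2)$, which is tidier, but your line-based iteration is a legitimate alternative once those two verifications are supplied.
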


\begin{proof}[Proof of Theorem \ref{theo: movbendandbreak} and Theorem \ref{theo: countingcomponents}:]
We prove both statements simultaneously using induction on $d$.
The base case for Theorem \ref{theo: movbendandbreak} is when $d=4$.
This follows from Theorem~\ref{theo: quarticsirreducible}.
The base case for Theorem \ref{theo: countingcomponents} is when $d=3, 4$.  
This follows from  Theorem~\ref{theo: cubicsirreducible} and Theorem~\ref{theo: quarticsirreducible}. So from now on we will suppose $d \geq 5$.

Suppose both statements are true in degrees less than $d$. For $2\leq e < d$, we denote the unique dominant component generically parametrizing birational stable maps from $\mathbb P^1$ of degree $e$ by $M_e$. We record a useful consequence of the induction hypothesis for later applications.  Since the Picard rank of $X$ is $1$, every free curve will intersect every divisor on $X$.  Thus, if we fix a component $M$ of $\overline{M}_{0,0}(X,e)$ which generically parametrizes free curves, there is a closed set of codimension $\geq 2$ in $X$ which contains all points not contained in any free curve parametrized by $M$.  We let $W$ denote the union of all such closed subsets as we vary $e < d$.  Then for any degree $f$ with $3 < f < d$ and for any point $x \in X \backslash W$ there are two free curves which both contain $x$ whose degrees add up to $f$.  Applying Theorems \ref{theo: irrfibers} and \ref{theo: countingcomponents}, we see that for any point $x \in X \backslash W$ any free curve through $x$ of degree $f$ with $3<f<d$ deforms into a chain of two free curves while keeping $x$ fixed.

Now we verify that Theorem \ref{theo: movbendandbreak} holds in degree $d$.
Suppose that $d=2n+1 \geq 5$ is odd.
Then it follows from Lemma~\ref{lemm: evaluation} that the evaluation map
\[
M^{(n)} \rightarrow X^n
\]
is dominant.  Choose $n$ general points: there is a one parameter family of irreducible curves through such points.  By bend and break these curves break into a stable map whose image is reducible.  Applying Corollary \ref{coro: genpointsclassification}.(2) we see that either all the components are free or the domain has two components $C_{1}, C_{2}$ where (without loss of generality) the image of $C_{1}$ is a degree $d-1$ free curve and the image of $C_{2}$ is a general line.

We start with the latter case.  We apply the observation at the beginning of the proof to note that $C_{1}$ breaks into free curves $C_{1}' \cup C_{1}''$ while keeping the intersection point with the line $C_{2}$ fixed.  Since $C_{2}$ is a general line, its normal bundle is given by
\[
N_{C_2/X} = \mathcal O \oplus \mathcal O(-1).
\] 
In particular, the stable map onto the curve $C_{1}' \cup C_{1}'' \cup C_{2}$ is a smooth point of $\overline{M}_{0,0}(X, d)$.  This means that $M$ must contain any irreducible locus which contains this stable map.  In particular, after deforming one of the free curves we may suppose this stable map is a chain; let $C_{1}''$ be the component which intersects the line.

Note that the subchain given by $C_{1}'' \cup C_{2}$ is also a smooth point of a component $N$ of $\overline{M}_{0,0}(X)$.  By a dimension count we see that chains of this type are not dense in $N$.  Since also the number of components can not go down under specialization, we see that the generic point of $N$ parametrizes a map from an irreducible domain onto a free curve.  Since $C_{1}'$ is free, we can move it along with a deformation in $N$ to see that our special stable map is a deformation of a chain of two free curves.  The argument above shows that this chain must be contained in $M$.

In the other case, every component of the deformed curve is free.  This implies that the corresponding stable map is a smooth point of $M$.
Since the deformed curve passes through general $n$ points, there is no multiple cover among the components.  We can separate the domain of the stable map into two connected components of degree $e$ and $f$ respectively with $e+f=d$.  The restriction of the stable map to these two connected components yield stable maps in $M_{e}^{(1)}$ and $M_{f}^{(1)}$.  By \cite[II.7.6 Theorem]{Kollar} each of these stable maps can be smoothed while keeping the intersection point fixed.  Since our original stable map was a smooth point of $M$, we see that $M$ contains the union of a free curve of degree $e$ with a free curve of degree $f$ and our assertion follows.

Next suppose that $d = 2n\geq 6$ is even.
Then it follows from Lemma~\ref{lemm: evaluation} that the evaluation map
\[
M^{(n)} \rightarrow X^n
\]
is dominant and generically finite.
Choose $n-1$ general points and a general free conic $D$,
and consider a general curve passing through these $n-1$ points and meeting with $D$.
Such curves $C$ form a $1$ dimensional family, so by bend and break these curves break into a stable map whose image is the union of two connected curves $C_1 \cup C_2 \rightarrow X$.  Applying Corollary \ref{coro: genpointsclassification}.(3) we obtain a finite list of possibilities.  Note that the deformed curve can not be the union of an irreducible degree $d-2$ free curve and two lines, the union of an irreducible degree $d-2$ free curve with a double line, the union of an irreducible degree $d-2$ free curve with a non-free conic, or the union of a line with a free curve of even degree and a free curve in odd degree. 
Indeed, in all cases the arguments of Corollary \ref{coro: genpointsclassification} show that there are only finitely many such curves through a set of $n-1$ general points.  Thus such curves can not meet a general conic by the observation in the proof of Corollary \ref{coro: genpointsclassification}.  The only remaining possibilities are that every component of the deformed curve is free, or the curve is the union of an irreducible free curve and a line.  In both cases we can argue as above to conclude the statement.

We next verify that Theorem \ref{theo: countingcomponents} in degree $< d$ and Theorem \ref{theo: movbendandbreak} in degree $\leq d$ together imply Theorem \ref{theo: countingcomponents} in degree $d$.  Let $M$ be any component of $\overline{M}_{0,0}(X, d)$ which is dominant and generically parametrizes birational stable maps from $\mathbb P^1$.  By Theorem \ref{theo: movbendandbreak} $M$ parametrizes a union of two free curves $C_e \cup C_f$ of degree $e, f$ with $e+f=d$ and $e \leq f$. This is a smooth point of $\overline{M}_{0,0}(X,d)$, thus we conclude that $M$ contains the main component of $M_e^{(1)} \times_X M_f^{(1)}$. Note that by Corollary~\ref{coro: irreducibility} this main component is unique.

For the final step, we will break and glue curves to show that $M$ contains the main component of $M_{2}^{(1)} \times_{X} M_{d-2}^{(1)}$.  Suppose that $3 \leq e$.  Applying the induction hypothesis and Theorem \ref{theo: irrfibers} we see that $M_{e}$ contains a point parametrizing a union of a general conic and a general curve of degree $e-2$.  For general choices the corresponding chain of three curves will be a smooth point of the moduli space.  Again using the induction hypothesis and the fact that the number of components of a stable map can not go down under specialization, we see that $M$ must contain the main component of $M_2^{(1)} \times_X M_{d-2}^{(1)}$.  There can only be one such component, proving uniqueness.
\end{proof}

Finally we prove our main theorem.

\begin{proof}[Proof of Theorem \ref{theo: maintheorem}:]
Although the statement assumes that the anticanonical degree is $\geq 3$, it is a bit cleaner to consider all families of rational curves at once.

First suppose that $M \subset \Mor(\mathbb{P}^{1},X)$ is a component which generically parametrizes maps that are birational onto their image.  If $M$ is dominant, then it must be the unique dominant family constructed by Theorem \ref{theo: countingcomponents} when the degree is $\geq 3$ or the unique family of conics when the degree is $2$.

If $M$ is not dominant, then \cite[Theorem 1.1]{LT17} shows that the curves parametrized by $M$ sweep out a surface $Z \subset X$ with $a(Z,-K_{X}|_{Z}) > 1$.  \cite[Section 6.4]{LTT14} shows that any such surface must be swept out by lines.  Since $X$ is general in moduli, the parameter space of lines is a smooth curve of genus $\geq 2$.  This implies that there is a unique surface $Z$ with $a(Z,-K_{X}|_{Z}) > 1$ and that the only rational curves on $Z$ are the lines.  Thus $M$ must be the component of $\Mor(\mathbb{P}^{1},X)$ parametrizing lines.

Now suppose that $M$ generically parametrizes maps that are $d:1$ onto their images.  Let $C$ denote a rational curve that is the image of a general map parametrized by $M$.  Suppose that $N$ is the parameter space for morphisms $\mathbb{P}^{1} \to X$ which are birational onto the deformations of $C$.  Then
\begin{equation*}
\dim(M) = \dim(N) + 2d-2.
\end{equation*}
Note that the difference of the expected dimensions of $M$ and $N$ is $(d-1)\cdot (-K_{X} \cdot C)$.  Since $M$ must have at least the expected dimension, we see that either $N$ must parametrize curves of anticanonical degree at most $2$ or $N$ must have larger than the expected dimension.  These yield exactly the two cases in the statement of the theorem.
\end{proof}

\section{Geometric Manin's Conjecture}
\label{sec: GM}

Let $X$ be a smooth projective Fano variety and set $L=-K_{X}$.  Manin's Conjecture predicts that the asymptotic growth of the number of components of $\Mor(\mathbb{P}^{1},X)$ of $L$-degree $d$ is controlled by $a(X,L)$ and $b(X,L)$.  We emphasize that we are only interested in the ``absolute case'' of rational curves on a fixed variety $X$.  Often Manin's Conjecture is phrased in the ``relative case'' as a count of sections of a family over a base curve $B$; from this perspective the ``absolute case'' means that we count sections of the trivial family $X \times \mathbb{P}^{1}$ over the base curve $\mathbb{P}^{1}$.

Just as in Manin's Conjecture for rational points, one should discount an ``exceptional set'' consisting of curves with higher growth rates than expected.  Here we recall the basic outline of the theory from \cite[Section 6]{LT17}.  For simplicity we will only describe the theory in the case when $-K_{X} \cdot C = 1$ for some curves $C$ on $X$ (as this is the only case we will need).

For a component $W$ of $\Mor(\mathbb{P}^{1},X)$ we let $\mathcal{C}_{W} \to W$ denote the universal family of rational curves over $W$.  We define $M_{d} \subset \Mor(\mathbb{P}^{1},X)$ to be the union of components $W$ satisfying the following conditions:
\begin{enumerate}
\item $W$ parametrizes rational curves of $L$-degree $d$.
\item The morphism $\mathcal{C}_{W} \to X$ does not factor through any morphism $f: Y \to X$ such that $\dim(Y) < \dim(X)$.
\item The morphism $\mathcal{C}_{W} \to X$ does not factor through any morphism $f: Y \to X$ such that $f$ is dominant and $\kappa(K_{Y} + a(Y,L)f^{*}L) > 0$.
\item The morphism $\mathcal{C}_{W} \to X$ does not factor through any morphism $f: Y \to X$ such that $f$ is dominant and face contracting and
\begin{equation*}
(a(X,L),b(X,L)) \leq (a(Y,f^{*}L),b(Y,f^{*}L))
\end{equation*}
in the lexicographic order.
\end{enumerate}
The definition of face contracting can be found in \cite[Section 3.3]{LT17}.  In our situation, it translates to the condition that $f$ induces a strict inequality of $a,b$-invariants in the lexicographic order.

Fix a formal variable $q$.  We define the counting function
\begin{equation*}
\overline{N}(X,q,d) = \sum_{i=1}^{d} \sum_{W \in M_{i}} q^{\dim W}
\end{equation*}
which records the dimensions and number of components of $M_{i}$ for $i \leq d$.  Manin's Conjecture predicts that
\begin{equation*}
\overline{N}(X,q,d) \sim Cq^{d}
\end{equation*}
where $C$ is an explicit constant determined by the geometry of the nef cone of curves.

In the situation of Theorem \ref{theo: maintheorem} we have seen that $M_{d}$ consists of a single component of dimension $d+3$ in every sufficiently large degree, yielding the asymptotic formula
\begin{equation*}
\overline{N}(X,q,d) \sim \frac{q^{3}}{1-q^{-1}} q^{d}.
\end{equation*}
This formula agrees with the predictions of \cite{LT17}.

\section{Gromov-Witten theory} \label{sec: gw}

In this section we use our results to show that certain Gromov-Witten invariants on $X$ are enumerative.  Consider the moduli spaces $\overline{M}_{0,n}(X,d)$.  In each degree $\geq 4$, there is a unique ``main'' component $M_{main} \subset \overline{M}_{0,n}(X,d)$ that generically parametrizes $n$-pointed very free rational curves of degree $d$.

We will focus on the pointed invariants $\langle [pt]^{n} \rangle_{0,n}^{X,2n}$.  To show these are enumerative, it is enough to observe the following:
\begin{itemize}
\item By Corollary \ref{coro: genpointsclassification}, the intersection of the pullback of point classes from the $n$ evaluation maps will vanish along every component of $\overline{M}_{0,n}(X,2n)$ except for $M_{main}$.
\item Since $M_{main}$ generically parametrizes maps that are birational onto free curves, this component does not admit a generic stabilizer and is generically smooth.
\end{itemize}
Together, these results show that the entire contribution of the virtual fundamental class to the Gromov-Witten invariant comes from the fundamental class of $M_{main}$.  Furthermore the generic smoothness guarantees that our intersection numbers count actual curves in a general situation.

\begin{rema}
By appealing to the classification of Corollary \ref{coro: genpointsclassification}, one can prove in a similar way the enumerativity of GW-invariants of the form $\langle [pt]^{n}, C \rangle_{0,n+1}^{X,2n+1}$ where $C$ is a general member of a dominant family of curves on $X$.
\end{rema}

There are many results in the literature relating the Gromov-Witten invariants of a complete intersection to the Gromov-Witten invariants of the ambient space; see for example \cite{gathmann02}.  In particular the $1$- and $2$-point Gromov-Witten invariants for Fano $3$-folds of Picard rank $1$ have been computed in \cite{PRZY} and \cite{GOLYSHEV}.  The other Gromov-Witten invariants can be recovered using the recursive formula \cite[Theorem 5.2]{BK05}.




\bibliographystyle{alpha}
\bibliography{Fano3-folds}

\def\cprime{$'$}
\begin{thebibliography}{BDPP13}

\bibitem[BDPP13]{BDPP}
S.~Boucksom, J-P. Demailly, M.~Paun, and T.~Peternell.
\newblock The pseudo-effective cone of a compact {K}\"ahler manifold and
  varieties of negative {K}odaira dimension.
\newblock {\em J. Algebraic Geom.}, 22(2):201--248, 2013.

\bibitem[BK05]{BK05}
A.~Bertram and H.~P. Kley.
\newblock New recursions for genus-zero {G}romov-{W}itten invariants.
\newblock {\em Topology}, 44(1):1--24, 2005.

\bibitem[BM90]{BM}
V.~V. Batyrev and Yu.~I. Manin.
\newblock Sur le nombre des points rationnels de hauteur born\'e des
  vari\'et\'es alg\'ebriques.
\newblock {\em Math. Ann.}, 286(1-3):27--43, 1990.

\bibitem[CPW17]{CPW16}
I.~Cheltsov, J.~Park, and J.~Won.
\newblock Cylinders in del {P}ezzo surfaces.
\newblock {\em Int. Math. Res. Not. IMRN}, 2017(4):1179--1230, 2017.
\newblock online publication.

\bibitem[CR19]{CR17}
I.~Coskun and E.~Riedl.
\newblock Normal bundles of rational curves on complete intersections.
\newblock {\em Commun. Contemp. Math.}, 21(2):1850011, 29, 2019.

\bibitem[DIM12]{DIM12}
O.~Debarre, A.~Iliev, and L.~Manivel.
\newblock On the period map for prime {F}ano threefolds of degree 10.
\newblock {\em J. Algebraic Geom.}, 21(1):21--59, 2012.

\bibitem[Gat02]{gathmann02}
A.~Gathmann.
\newblock Absolute and relative {G}romov-{W}itten invariants of very ample
  hypersurfaces.
\newblock {\em Duke Math. J.}, 115(2):171--203, 2002.

\bibitem[Gol07]{GOLYSHEV}
V.~V. Golyshev.
\newblock Classification problems and mirror duality.
\newblock In {\em Surveys in geometry and number theory: reports on
  contemporary {R}ussian mathematics}, volume 338 of {\em London Math. Soc.
  Lecture Note Ser.}, pages 88--121. Cambridge Univ. Press, Cambridge, 2007.

\bibitem[HJ17]{HJ16}
Chr. Hacon and C.~Jiang.
\newblock On {F}ujita invariants of subvarieties of a uniruled variety.
\newblock {\em Algebr. Geom.}, 4(3):304--310, 2017.

\bibitem[HRS04]{HRS04}
J.~Harris, M.~Roth, and J.~Starr.
\newblock Rational curves on hypersurfaces of low degree.
\newblock {\em J. Reine Angew. Math.}, 571:73--106, 2004.

\bibitem[HTT15]{HTT15}
B.~Hassett, S.~Tanimoto, and Y.~Tschinkel.
\newblock Balanced line bundles and equivariant compactifications of
  homogeneous spaces.
\newblock {\em Int. Math. Res. Not. IMRN}, (15):6375--6410, 2015.

\bibitem[IM07]{IM07}
A.~Iliev and L.~Manivel.
\newblock Prime {F}ano threefolds and integrable systems.
\newblock {\em Math. Ann.}, 339(4):937--955, 2007.

\bibitem[IP99]{iskov}
V.~A. Iskovskikh and Y.~G. Prokhorov.
\newblock {\em Algebraic geometry. {V}}, volume~47 of {\em Encyclopaedia of
  Mathematical Sciences}.
\newblock Springer-Verlag, Berlin, 1999.
\newblock Fano varieties, A translation of {{\i}t Algebraic geometry. 5}
  (Russian), Ross. Akad. Nauk, Vseross. Inst. Nauchn. i Tekhn. Inform., Moscow,
  Translation edited by A. N. Parshin and I. R. Shafarevich.

\bibitem[KM99]{KM99}
S.~Keel and J.~McKernan.
\newblock Rational curves on quasi-projective surfaces.
\newblock {\em Mem. Amer. Math. Soc.}, 140(669):viii+153, 1999.

\bibitem[Kol96]{Kollar}
J.~Koll\'ar.
\newblock {\em Rational curves on algebraic varieties}, volume~32 of {\em
  Ergebnisse der Mathematik und ihrer Grenzgebiete. 3. Folge. A Series of
  Modern Surveys in Mathematics [Results in Mathematics and Related Areas. 3rd
  Series. A Series of Modern Surveys in Mathematics]}.
\newblock Springer-Verlag, Berlin, 1996.

\bibitem[KPS18]{KPS16}
A.~G. Kuznetsov, Y.~G. Prokhorov, and C.~A. Shramov.
\newblock Hilbert schemes of lines and conics and automorphism groups of {F}ano
  threefolds.
\newblock {\em Jpn. J. Math.}, 13(1):109--185, 2018.

\bibitem[KS04]{KS04}
J.~Koll\'ar and F.-O. Schreyer.
\newblock Real {F}ano $3$-folds of type ${V}_{22}$.
\newblock In A.~Collino, A.~Conte, and M.~Marchisio, editors, {\em The Fano
  Conference: Organized to Commemorate the 50th Anniversary of the Death of
  Gino Fano (1871-1952) : Torino (Italy), 29 September - 5 October 2002 :
  Proceedings}, pages 515--531. Dipartimento di Matematica dell'Universit\`a di
  Torino, 2004.

\bibitem[Kuz05]{Kuz05}
A.~G. Kuznetsov.
\newblock Derived categories of the {F}ano threefolds {$V_{12}$}.
\newblock {\em Mat. Zametki}, 78(4):579--594, 2005.

\bibitem[Let84]{Let84}
M.~Letizia.
\newblock The {A}bel-{J}acobi mapping for the quartic threefold.
\newblock {\em Invent. Math.}, 75(3):477--492, 1984.

\bibitem[LST18]{LST18}
B.~Lehmann, A.K. Sengupta, and S.~Tanimoto.
\newblock Geometric consistency of {M}anin's {C}onjecture.
\newblock arXiv:1805.10580 [math.AG], 2018.

\bibitem[LT17]{LT16}
B.~Lehmann and S.~Tanimoto.
\newblock On the geometry of thin exceptional sets in {M}anin's conjecture.
\newblock {\em Duke Math. J.}, 166(15):2815--2869, 2017.

\bibitem[LT19]{LT17}
B.~Lehmann and S.~Tanimoto.
\newblock Geometric {M}anin's conjecture and rational curves.
\newblock {\em Compos. Math.}, 155(5):833--862, 2019.

\bibitem[LTT18]{LTT14}
B.~Lehmann, S.~Tanimoto, and Y.~Tschinkel.
\newblock Balanced line bundles on {F}ano varieties.
\newblock {\em {\it J. Reine Angew. Math.}}, 743:91--131, 2018.

\bibitem[Mo{\u\i \v s}67]{Mou67}
B.~G. Mo{\u\i \v s}ezon.
\newblock Algebraic homology classes on algebraic varieties.
\newblock {\em Izv. Akad. Nauk SSSR Ser. Mat.}, 31:225--268, 1967.

\bibitem[Prz07]{PRZY}
V.~V. Przhiyalkovski\u\i.
\newblock Gromov-{W}itten invariants of {F}ano threefolds of genera 6 and 8.
\newblock {\em Mat. Sb.}, 198(3):145--158, 2007.

\bibitem[Sen17a]{Sen17}
A.~K. Sengupta.
\newblock Manin's b-constant in families.
\newblock {\em Algebra Number Theory}, 2017.
\newblock to appear, arXiv: 1708.05447 [math.AG].

\bibitem[Sen17b]{Sen17b}
A.K. Sengupta.
\newblock Manin's conjecture and the {F}ujita invariant of finite covers.
\newblock arXiv:1712.07780 [math.AG], 2017.

\bibitem[She12]{Shen10}
M.~Shen.
\newblock On the normal bundles of rational curves on {F}ano 3-folds.
\newblock {\em Asian J. Math.}, 16(2):237--270, 2012.

\bibitem[Tes09]{Testa05}
D.~Testa.
\newblock The irreducibility of the spaces of rational curves on del {P}ezzo
  surfaces.
\newblock {\em J. Algebraic Geom.}, 18(1):37--61, 2009.

\end{thebibliography}

\end{document}